\newcommand{\Ric}{\textnormal{Ric}}
\newcommand{\sca}{\textnormal{sc}}
\newcommand{\bi}{\textnormal{bi}}
\newcommand{\fl}{\textnormal{flag}}
\newcommand{\dist}{\textnormal{dist}\,}
\newcommand{\tr}{\textnormal{tr}}
\newcommand{\mc}{\mathcal}
\newcommand{\mb}{\mathbf}
\newcommand{\eps}{\varepsilon}
\newcommand{\la}{\langle}
\newcommand{\ra}{\rangle}
\newcommand{\id}{\textnormal{id}}
\newcommand{\we}{\wedge}
\newcommand{\ve}{\varepsilon}
\newcommand{\vf}{\varphi}
\newtheorem{lemma}{Lemma}[section]
\newtheorem{prop}[lemma]{Proposition}
\newtheorem{thm}[lemma]{Theorem}
\newtheorem{deflem}[lemma]{Definition, Lemma}
\newtheorem{defin}[lemma]{Definition}
\newtheorem{notation}[lemma]{Notation}
\newtheorem{remark}[lemma]{Remark}
\newtheorem{cor}[lemma]{Corollary}
\numberwithin{equation}{section}
\begin{document}
\title{Gluing Riemannian manifolds \\with curvature operators at least $\kappa$}
\author{Arthur Schlichting\footnote{The author was supported by the SFB TR 71}}
\date{October 10, 2012}
\maketitle
\begin{abstract}
\noindent
Let $(M_0,g_0)$ and $(M_1,g_1)$ be smooth Riemannian manifolds with smooth compact boundaries and Riemannian curvature operators $\geq\kappa$ (which means that 
all eigenvalues of the curvature operators are at least $\kappa$), and let $M$  be 
the Riemannian manifold resulting from gluing $M_0$ and $M_1$ along some isometry of their boundaries. The metrics $g_0$ and $g_1$ induce a 
continuous metric $g$ on $M$. If the sum of the second fundamental forms of 
the common boundary of $M_0$ and $M_1$ with respect to the inward normals is  positive semidefinite, then $g$ can be approximated by 
smooth metrics which have curvature operators almost $\geq\kappa$. An analogous result holds for manifolds
with with lower bounds on Ricci curvature, scalar curvature 
(in this case it suffices to assume that the sum of the mean curvatures of the boundary is nonnegative), bi-curvature, isotropic curvature, and flag curvature, respectively.
\end{abstract}

\section{Introduction and statement of results}
Gluing Alexandrov spaces of bounded curvature has been studied in a number of works, in particular by Reshetnyak \cite{re} (curvature bounded from above)
and Petrunin \cite{pet} (curvature bounded from below).
The case where the spaces being glued are smooth Riemannian manifolds of sectional curvature at least $\kappa$
was studied by Kosovski\u{\i} \cite{k}, where he shows that the resulting Alexandrov space has 
curvature at least $\kappa$ if and only if the sum of the second fundamental forms
of the initial manifolds on their common boundary is positive semidefinite. In this paper we shall examine a similar setup for smooth Riemannian
manifolds with smooth compact boundaries and curvature operators $\geq\kappa$. The method being used in \cite{k} can be applied
with some modifications.
\\
Let $(M,g)$ be a smooth Riemannian manifold with a smooth metric $g$, and let $\Lambda^2 (TM)\subset TM\otimes TM$ be the bundle of two-vectors  over $M$.
Given a point $p\in M$ and a basis $\{e_1,\dots, e_n\}$ of $T_pM$, $\Lambda^2 (T_pM)$ is generated by
\[\{e_i\we e_j=e_i\otimes e_j-e_j\otimes e_i  | 1\leq i<j\leq n\}\]
The metric $g$ induces an inner product $\mc I^g$ on $\Lambda^2(TM)$, defined by
\begin{eqnarray}\label{def I}
\mc I^g(e_i\we e_j,e_k\we e_l):=g_{ik}g_{jl}-g_{jk}g_{il}
\end{eqnarray}
where  $g_{ik}=g(e_i,e_k)$. Note that if the vectors $e_i$ are orthonormal with respect to $g$ then the two-vectors $e_i\we e_j$ are orthonormal with
respect to $\mc I^g$.
Let $R^g=(R^g_{ijkl})$ be the Riemannian curvature tensor of $g$ and $R^g_{ijkl}=R^g(e_i,e_j,e_k,e_l)$
(We choose the sign of $R^g_{ijkl}$ such that the sectional curvature of a two-plane spanned by some orthonormal $e_i,e_j$ is given by $R^g_{ijij}$.)
$R^g$ induces a symmetric bilinear form  $\mc R^g$ on $\Lambda^2(TM)$ via
\[\mc R^g(e_i\we e_j,e_k\we e_l)=R^g_{ijkl}\]
The Riemannian curvature operator on $\Lambda^2(TM)$, which we shall also denote by $\mc R^g$, is defined by the property
\[\mc I^g(\cdot,\mc R^g\cdot)=\mc R^g(\cdot,\cdot)\]
By $\mc R^g\geq \kappa\in \mathbb R$ (or $\mc R^g\geq\kappa \mc I^g$) we mean that all eigenvalues of $\mc R^g$ are at least $\kappa$, or equivalenly that
\[\mathcal R^g(\alpha,\alpha)\geq \kappa \mathcal I^g(\alpha,\alpha)\] holds for all $\alpha\in \Lambda^2(TM)$.
\\

Let us glue two Riemannian manifolds $(M_0,g_0)$ and $(M_1,g_1)$ along some isometry $\phi$ of their boundaries 
(which means we identify $p\in \partial M_0$ and $\phi(p)\in\partial M_1$), and define
a metric $g$ on the resulting manifold $M$  by $g|_{M_i}=g_i$, $i=0,1$. Due to the isometry of the boundaries, $g$ is continuous, but fails to be $C^2$-smooth in general.
In this case we can not speak of the Riemannian curvature operator of $g$ in the classical sense. In
\cite{k} Kosovski\u{\i} made use of the fact that nevertheless $M$ can be equipped with a length structure induced 
by $g$ and instead of bounded sectional curvature in the classical sense one has 
the notion of bounded curvature in the sense of Alexandrov (see, e.g., \cite{burago}). However,
there is no analogue of this notion for bounds on the Riemannian curvature operator. We introduce the following definition:

\begin{defin}\label{c0curv}
Let $M$ be a Riemannian manifold, equipped with a continuous metric $g$. We say that the Riemannian curvature operator of $g$ is at least $\kappa$
iff there exists a family of $C^\infty$-metrics $(g_{(\delta)})$ on $M$ which converge to $g$ uniformly on every compact subset
as $\delta$ tends to zero 
and
\[\mathcal R(g_{(\delta)}) \geq  \bigl(\kappa-\varepsilon(\delta)\bigr) \mathcal I(g_{(\delta)})\]
holds with $\varepsilon(\delta)\rightarrow 0$. 
\end{defin}

In view of the above definitions the main result of this paper is the following

\begin{thm}\label{main}
Let $M_0$ and $M_1$ be smooth Riemannian manifolds with (at least $C^2$-)smooth metrics $g_0$ and $g_1$ 
and smooth compact boundaries $\Gamma_0$ and $\Gamma_1$, respectively. 
Let $L_i$ be the second fundamental form of $\Gamma_i$ with respect to the inward normal $N_i$, $i=0,1$.
Suppose that there exists an isometry $\phi: \Gamma_0\rightarrow \Gamma_1$,
and let $M=M_0\cup_\phi M_1$ denote the
manifold obtained from gluing $M_0$ and $M_1$ along $\phi$. Then
$\Gamma:=\Gamma_0=_\phi \Gamma_1$ can be seen as a hypersurface of $M$ and we may define $L=L_0+L_1$ on $\Gamma$. 

Let $g$ be the continuous metric on $M$ induced by $g_0$ and $g_1$. Suppose that $\mathcal R(g_0)$ and $\mathcal R(g_1)$ are at least $\kappa$. 
If $L$ is positive semidefinite, then $\mathcal R(g)\geq \kappa$ in the sense of Definition \ref{c0curv}.

\end{thm}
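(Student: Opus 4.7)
I would adapt Kosovski\u{\i}'s smoothing strategy \cite{k} to the curvature operator setting, via a standard mollification in Fermi coordinates after a preliminary perturbation that makes $L$ strictly positive definite.

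First, reduce to the case $L>0$. Given $\eta>0$, replace each $g_i$ by a $C^\infty$-metric $\tilde g_i^\eta$ which coincides with $g_i$ outside a thin collar of $\Gamma_i$, preserves the boundary metric, and bends $\Gamma_i$ so that the new second fundamental form satisfies $\tilde L_i^\eta \geq L_i + \eta\, g|_{\Gamma_i}$. Such a modification can be arranged by a warped-product deformation confined to the collar, and it perturbs the curvature operator by at most $\mathcal O(\eta)$. It therefore suffices, after letting $\eta\to 0$ by a diagonal argument at the end, to prove the theorem under the assumption $L\geq 2\eta\, g|_\Gamma$ with $\kappa$ relaxed to $\kappa - C\eta$.

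Next, choose Fermi coordinates $(x,t)\in\Gamma\times(-\rho,\rho)$ with $t<0$ on $M_0$ and $t>0$ on $M_1$, writing $g = dt^2 + h(x,t)$. Then $h$ is smooth for $t\neq 0$ and continuous across $t=0$, with jump $\tfrac{1}{2}(\partial_t h(x,0^-)-\partial_t h(x,0^+)) = L(x)\geq 2\eta\, g|_\Gamma$. Pick an even nonnegative mollifier $\rho_\delta$ of width $\delta$ in $t$ and set $g_{(\delta)}:=dt^2 + h *_t \rho_\delta$. Outside $\{|t|\leq\delta\}$, $g_{(\delta)}=g$; inside, $g_{(\delta)}$ is $C^\infty$, and $g_{(\delta)}\to g$ uniformly on compact sets. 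The Fermi identity
\[
R^{g_{(\delta)}}_{0a0b} = -\tfrac{1}{2}\,\partial_t^2 h_{\delta,ab} + \tfrac{1}{4}\,(\partial_t h_\delta)_{ac}\, h_\delta^{cd}\, (\partial_t h_\delta)_{db},
\]
combined with the Gauss and Codazzi equations for the remaining components of $R^{g_{(\delta)}}$, shows that every curvature term is bounded uniformly in $\delta$ except for $-\tfrac{1}{2}\,\partial_t^2 h_{\delta,ab} = \psi_\delta(t)\, L_{ab}(x) + \mathcal O(1)$, where $\psi_\delta\geq 0$ is a mollified Dirac of total mass one supported in $[-\delta,\delta]$. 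Since $L\geq 2\eta\, g|_\Gamma$, this singular contribution is a positive definite tensor in $(a,b)$ of magnitude $\sim\eta/\delta$ on the strip---exactly the sign required to boost $\mathcal R^{g_{(\delta)}}$ on mixed $2$-vectors $e_0\we e_a$.

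Decomposing any $\alpha\in\Lambda^2(TM)$ in the Fermi frame as $\alpha = \alpha^m + \alpha^\tau$ (its $e_0\we e_a$ and $e_a\we e_b$ parts), the preceding analysis yields schematically
\[
\mathcal R^{g_{(\delta)}}(\alpha,\alpha)-\kappa\,\mathcal I^{g_{(\delta)}}(\alpha,\alpha) \geq \tfrac{c\eta}{\delta}|\alpha^m|^2 - C|\alpha^m||\alpha^\tau| - C|\alpha^m|^2 + \bigl(\mathcal R^{g_{(\delta)}}_\tau - \kappa\, \mathcal I_\tau\bigr)(\alpha^\tau,\alpha^\tau),
\]
and Young's inequality absorbs the cross term into the $c\eta/\delta$ term once $\delta$ is small. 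The crux---and the most delicate step---is to show that the tangential block $\mathcal R^{g_{(\delta)}}_\tau$ does not fall below $\kappa\,\mathcal I_\tau$ by more than $\mathcal O(\eta)$ inside the strip. By the Gauss equation it splits as $\mathcal R^{g_{(\delta)}}_\tau = \mathcal R^{\Gamma_t^{(\delta)}} + L_{(\delta)}\we L_{(\delta)}$ with $L_{(\delta)} = -\tfrac{1}{2}\,\partial_t h_\delta$; the slice curvature varies continuously in $t$, but the Gauss correction $L_{(\delta)}\we L_{(\delta)}$ is no longer a convex combination of its one-sided values and must be controlled separately. This is precisely where the full \emph{curvature operator} hypothesis on $g_0,g_1$, rather than a mere sectional bound, is genuinely used, since the Gauss identity is itself an operator identity on $\Lambda^2$ which must be invoked on both sides of $\Gamma$ and then interpolated. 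A Weyl-type spectral perturbation for $\mathcal R^{g_{(\delta)}}_\tau$ should then close the gap, yielding $\mathcal R^{g_{(\delta)}} \geq (\kappa - C\eta)\,\mathcal I^{g_{(\delta)}}$; a diagonal choice $\eta = \eta(\delta)\to 0$ concludes.
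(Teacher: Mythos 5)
Your plan is a genuinely different route from the paper's: you mollify the Lipschitz glued metric directly across $\Gamma$ and try to live with the resulting Dirac-type term, whereas the paper never creates singular terms at all --- it first deforms $g_0$ by $\mathbf G_\delta=\mathbf I+2F_\delta(x^n)\mathbf L-2C\mathcal F_\delta(x^n)\mathbf P^T$ so that the glued metric $g_{(\delta)}$ is $C^{1,1}$ (the first normal derivatives match on $\Gamma$), proves $\mathcal R(g_\delta)\ge(\kappa-\varepsilon(\delta))\mathcal I$ via the decomposition $\mathcal R_\delta\approx\mathcal R-f_\delta^2\,\mathbf L\wedge\mathbf L+f_\delta\mathcal B-2f'_\delta\,\mathbf L\wedge\mathbf P^N+2f_\delta^2\,\mathbf L^2\wedge\mathbf P^N+2Cf_\delta\,\mathbf P^T\wedge\mathbf P^N$ together with a concavity-in-$f_\delta$ interpolation between the curvature hypotheses on the two sides (this is where $L\ge0$ enters, through $-2f'_\delta\,\mathbf L\wedge\mathbf P^N\ge0$ and $\mathbf L\wedge\mathbf L\ge0$), and only afterwards mollifies, which is then harmless. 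The paper also never needs your preliminary strict-positivity reduction (it uses such a bending only in the scalar-curvature section).

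As written, your argument has a genuine gap at its crux, and the difficulty is partly misidentified. On the strip $|t|\le\delta$ the mollified first derivative is essentially a convex combination $\partial_t h_\delta\approx\lambda(t)\,\partial_t h(0^-)+(1-\lambda(t))\,\partial_t h(0^+)$, so the quadratic terms incur convexity defects of size $\lambda(1-\lambda)\,\|L\|^2=O(1)$. In the tangential (Gauss) block this defect actually has the \emph{favorable} sign when $L\ge0$ (it equals $+\lambda(1-\lambda)\,L\wedge L\ge0$ by the Kulkarni--Nomizu positivity), so no ``Weyl-type spectral perturbation'' is needed there; the dangerous defect sits in the block you treat as unproblematic, namely $R_{0a0b}=-\tfrac12\partial_t^2 h_{\delta,ab}+\tfrac14(\partial_t h_\delta\,h_\delta^{-1}\,\partial_t h_\delta)_{ab}$, where it enters with the sign $-\lambda(1-\lambda)\,(L\,h^{-1}L)_{ab}$. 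Your scheme beats all $O(1)$ errors by the mollified Dirac $c\eta/\delta$, but a smooth mollifier $\psi_\delta$ vanishes at the edges of the strip while $\lambda(1-\lambda)$ there need not be dominated by $\delta\,\psi_\delta(t)$; so the claimed pointwise domination, and with it the Young-inequality absorption of the mixed terms, fails near $|t|=\delta$ unless you prove a pointwise comparison between $\psi_\delta$ and its own tail mass (a property of a carefully chosen mollifier profile that you never state). The natural repair is not block-wise absorption but the identity ``mollified curvature $\approx$ convex combination of the one-sided curvature operators $+$ Dirac term $+$ convexity defects,'' with the two defects and the Dirac term controlled as above; in the proposal this step is replaced by ``should then close the gap,'' which is not an argument. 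The preliminary bending to $L\ge2\eta g$ and the final diagonal argument are plausible (the paper performs an analogous bending in its scalar-curvature section), but the heart of the proof --- the operator inequality on the smoothing strip --- is not established.
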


Analogous results hold for manifolds with lower bounds on Ricci curvature, scalar curvature (in this case it suffices to require only that $\tr_gL\geq 0$ on $\Gamma$), 
bi-curvature (the sum of the two
smallest eigenvalues of the curvature operator), isotropic curvature and flag curvature, respectively. 
\\
\\
Plan of the proof of Theorem \ref{main}:
\\
\\
We proceed similarly to \cite{k}:
\begin{itemize}
\item In Section 2 we sum up auxiliary constructions. We introduce a smooth structure on $M$ relative to which $M_0$,
$M_1$ and their common boundary $\Gamma$ are smooth submanifolds. The metric $g$ on $M$ induced by 
$g_0$ and $g_1$ is continuous.

By modifying the metric $g_0$ near $\Gamma$ we construct a new metric $g_\delta$ on $M_0$. We then define a metric 
$g_{(\delta)}$ on $M$ by $g_{(\delta)}|_{M_0}=g_\delta$ and $g_{(\delta)}|_{M_1}=g_1$. The coefficients of this 
metric belong to the Sobolev class $W^{2,\infty}_{loc}$.

All of the constructions in this section have been adopted as it stands from \cite{k}, therefore we suppress the proofs
to the greatest extent. A more detailed discussion can be found in \cite{k}, \S\S~3-6.
\item
In Section 3 we compare the Riemannian curvature operators with respect to $g_\delta$ and $g_0$ on $M_0$. 
This section corresponds with \S~7 in \cite{k}.
\item
In Sections 4 and 5 we estimate the curvature operator of $g_\delta$, showing that 
$\mathcal R(g_\delta)\geq \kappa-\varepsilon(\delta)$ holds on $M_0$, which implies
$\mathcal R(g_{(\delta)})\geq \kappa-\varepsilon(\delta)$ a.e. on $M$ for the weakly defined curvature operator of 
the $W^{2,\infty}_{loc}$-metric $g_{(\delta)}$.
\item
In Section 6 we  mollify $g_{(\delta)}$
and construct a family of smooth metrics as required in Definition \ref{c0curv}. 
\end{itemize}

\emph{Acknowledgements:} The author would like to thank Prof. Dr. Miles Simon for his support and advice. I would also like to thank 
Prof. Dr. Guofang Wang for useful suggestions and comments on an earlier version of this work.

\section{Definitions and auxiliary identities}
In \cite{k} Kosovski\u{\i} introduces a smooth structure on $M$ (Fermi coordinates) relative to which $M_0$ and $M_1$ are smooth submanifolds.
Moreover, the coefficients of 
the metric $g$ on $M$, which is defined by $g|_{M_i}=g_i$, $i=0,1$, are continuous (cf. \cite{k}, Lemma 3.1).
Throughout this work, we will constantly make use of the properties of that structure. To that end, we shall repeat the construction here:

First we cover $\Gamma$ with coordinate charts  $(x^1,\ldots,x^{n-1})$. If the distance $d$ is small enough,
the hypersurfaces $\Gamma(d)$ equidistant to $\Gamma$ are smooth. For a point $p\in M_0$ near $\Gamma$ we put $x^n(p)=d(p,\Gamma)$, and 
$(x^1(p),\ldots,x^{n-1}(p))$ are the same as the coordinates of the point of $\Gamma$ closest to $p$. On $M_1$ we repeat this construction
with $x^n(p)=-d(p,\Gamma)$. In these coordinates the metric tensor $g$ on $M$ defined above is of the form
\begin{eqnarray}\label{matrix}
\begin{pmatrix}
  g_{1,1}   & \cdots & g_{1,n-1}   & 0\\
  \vdots    & \ddots & \vdots      & \vdots \\
  g_{n-1,1} & \cdots & g_{n-1,n-1} & 0  \\
  0         & \cdots &        0    & 1 
 \end{pmatrix}
\end{eqnarray}
The coordinate charts $(x^1,\dots,x^n)$ give us a smooth structure on $M=M_0\cup_\phi M_1$ which we will work with in what follows.
All computations near $\Gamma$ wiil be carried out in these coordinates, unless noted differently.
\begin{notation}\label{normal def}
On $M_0$ we put $\partial_i=\frac{\partial}{\partial x^i}$ for $1\leq i\leq n-1$ and $N=\frac{\partial}{\partial x^n}$, 
where $(x^1,\ldots,x^n)$ is the coordinate chart introduced above. Note that $N(p)$ is a well defined smooth vectorfield near $\Gamma$, which
is normal to the hypersurface $\Gamma(d(p))$ equidistant to $\Gamma$ and containing $p$.
\end{notation}
 
The following lemma uses the above construction to smoothly extend the metric $g_1$ from $M_1$ to a small neighborhood
of $\Gamma$ on $M_0$.

\begin{lemma}[\cite{k}, Lemma 3.1]\label{strichmetrik}
The metric $g_1$ smoothly extends to a small neighborhood of $\Gamma$ in $M_0$
 in such a way that the hypersurfaces equidistant to $\Gamma$ with respect to the extended metric 
$g'_1$ and the metric $g_0$ coincide.
\begin{proof}
In coordinates defined above the metric $g_1$ on $M_1$ is of the same form as in \eqref{matrix}. 
Locally in a small enough coordinate neighborhood $U$ of some point of $\Gamma$ we may smoothly extend $(g_1)_{ij}$, $1\leq i,j\leq n-1$
to $U\cap M_0$ and put $(g'_1)_{in}=\delta_{in}$. We then cover $\Gamma$ by finitely many such neighborhoods and define $g'_1$ near $\Gamma$
using a subordinate partition of unity. One easily checks that the obtained metric has the desired property.
\end{proof}
\end{lemma}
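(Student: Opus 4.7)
The plan is to work entirely in the Fermi coordinate chart $(x^1,\ldots,x^n)$ introduced above, in which both $g_0$ and $g_1$ take the block form \eqref{matrix}. In this form, the hypersurfaces $\{x^n=c\}$ are automatically the equidistant hypersurfaces of $\Gamma$ with respect to any smooth metric whose coefficient matrix shares this block structure (namely $g_{nn}\equiv 1$ and $g_{in}\equiv 0$ for $i<n$), since then $N=\partial/\partial x^n$ is unit and orthogonal to the tangential directions, so its integral curves are unit-speed geodesics realising the distance to $\Gamma$. Hence the whole task reduces to extending just the tangential block $(g_1)_{ij}$, $1\le i,j\le n-1$, smoothly from $\{x^n\le 0\}$ to a two-sided neighborhood of $\Gamma$, and then decreeing $(g_1')_{nn}=1$, $(g_1')_{in}=0$.

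To carry out the local extension, fix a Fermi-coordinate neighborhood $U$ of a point $p\in\Gamma$. On $U\cap\{x^n\le 0\}$ the functions $(g_1)_{ij}(x^1,\ldots,x^n)$ are smooth up to the boundary $\{x^n=0\}$ in the sense that all partial derivatives extend continuously. Seeley's extension theorem (or equivalently Whitney's extension theorem applied to the full jet along $\Gamma$) then produces smooth extensions $\tilde g_{ij}$ to all of $U$. Combining with the trivial choices $(g_1')_{nn}=1$ and $(g_1')_{in}=0$ yields a smooth symmetric $(0,2)$-tensor on $U$ that agrees with $g_1$ on $U\cap M_1$ and has the required block form.

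To globalise, cover the compact boundary $\Gamma$ by finitely many such Fermi-coordinate neighborhoods $U_\alpha$ and take a subordinate partition of unity $\{\chi_\alpha\}$. Since every $U_\alpha$ uses the \emph{same} Fermi chart (determined by $g_0$ and $\Gamma$), the block form $g_{nn}=1$, $g_{in}=0$ is a linear constraint preserved by convex combinations, so the patched tensor $g_1':=\sum_\alpha\chi_\alpha\,\tilde g^{(\alpha)}$ still has this block form and still restricts to $g_1$ on $M_1$. Finally, positive definiteness holds along $\Gamma$ by construction, hence in a sufficiently thin tubular neighborhood by continuity, so $g_1'$ is a genuine Riemannian metric with equidistant hypersurfaces coinciding with those of $g_0$. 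The only mild obstacle is invoking a correct smooth-extension-across-a-boundary theorem (Seeley/Whitney); once that is in place, everything reduces to the observation that the required property is a coordinate-expressible linear condition that survives partition-of-unity gluing.
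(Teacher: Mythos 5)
Your proposal follows essentially the same route as the paper: extend the tangential block $(g_1)_{ij}$ smoothly across $\Gamma$ in the Fermi chart, set $(g_1')_{in}=\delta_{in}$, and glue with a partition of unity, noting that the block form forces the equidistant hypersurfaces to coincide. You merely make explicit what the paper leaves implicit (the Seeley/Whitney extension and the check that the block structure survives convex combinations), so this is a correct proof in the same spirit.
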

Throughout this work we will use the following
\begin{notation}\label{optens}
Given a $(0,2)$ tensor $A$ on $TM$ we denote by $\mathbf A$ the corresponding linear endomorphism of $TM$ satisfying
\[A(v,w)=\langle v,\mathbf A w\rangle_g\]
If $\{e_1,\dots,e_n\}$ is a basis of $TM$ at some point $p\in M$ and $\mathbf A e_i=\mathbf A_i^j e_j$, then
$\mathbf A_i^j=A_{ki}g^{kl}$, where $A_{ki}=A(e_k,e_i)$ and $(g^{kl})_{1\leq k,l\leq n}$ is the inverse of the matrix $(g(e_k,e_l))_{1\leq k,l,\leq n}$. The operator $\mathbf A$ is self-adjoint iff the tensor $A$ is symmetric.
\end{notation}

\begin{deflem}[The operator $\mathbf L$, cf. \cite{k}, 3.4 and 3.5]\label{defnL}
Let $L$ be the sum of the second fundamental forms on $\Gamma$ with respect to the inward normals on $M_0$ and $M_1$
(or the difference of the second fundamental forms with respect to the common normal $N$), and let $\mathbf L$ be the corresponding selfadjoint operator on $T\Gamma$,
 i.e. $L(\cdot ,\cdot)=\langle\cdot,\mathbf L \cdot\rangle_0$.\\
\\ 
 In a small neighborhood of $\Gamma$
the operator $\mathbf L $ extends to $TM_0$ so that $\mathbf L N=0$ and $\nabla_N\mathbf L =0$.
\begin{proof}
For a point $p\in \Gamma$ we may extend $\mathbf L$ to $T_pM_0$ by linearity such that $\mathbf L N=0$, and for $X\in TM_0$ 
we use parallel transportation $P$ along the integral curves of the vector field $N$ and put
$\mathbf L X:=P^{-1}\mathbf L PX$
\end{proof}
\end{deflem}
\noindent
Note that if the initial operator is positive semidefinite, then so is its extention: 
\[\langle X,\mathbf L X\rangle_0=\langle X,P^{-1}\mathbf L PX\rangle_0=\langle PX,\mathbf L PX\rangle_0\geq 0\]

The following $C^\infty([0,\infty),\mathbb R)$-functions will be used to modify the metric $g_0$ near $\Gamma$:

\begin{defin}[Auxiliary functions $f_\delta$, $F_\delta$ and $\mathcal F_\delta$, cf. \cite{k}, 3.3]\label{fdeltadef}
Let  $f_\delta$ be a function on  $\mathbb R_{\geq 0}$ with the following properties:
\begin{eqnarray*}
f_\delta(x)= 1-\frac{x}{\delta^4}\quad  &\textnormal{ if } & x\in [0,\delta^4] \\
-\delta^2 \leq f_\delta \leq 0
\textnormal{ and } f'_\delta(x)\leq \delta \quad  &\textnormal{ if } & x\in [\delta^4,\delta] \\
f_\delta(x)= 0 \quad  &\textnormal{ if } & x\in [\delta,+\infty) 
\end{eqnarray*}
\noindent
and
\[\int_0^\delta f_\delta(t)dt=0\]
\noindent 
We put
\[F_\delta(x):=\int_0^xf_\delta(t)dt\]
\[\mathcal F_\delta(x):=\int_0^xF_\delta(t)dt\]
\end{defin}
\begin{figure}[ht]
	\centering
  \includegraphics[scale=0.3]{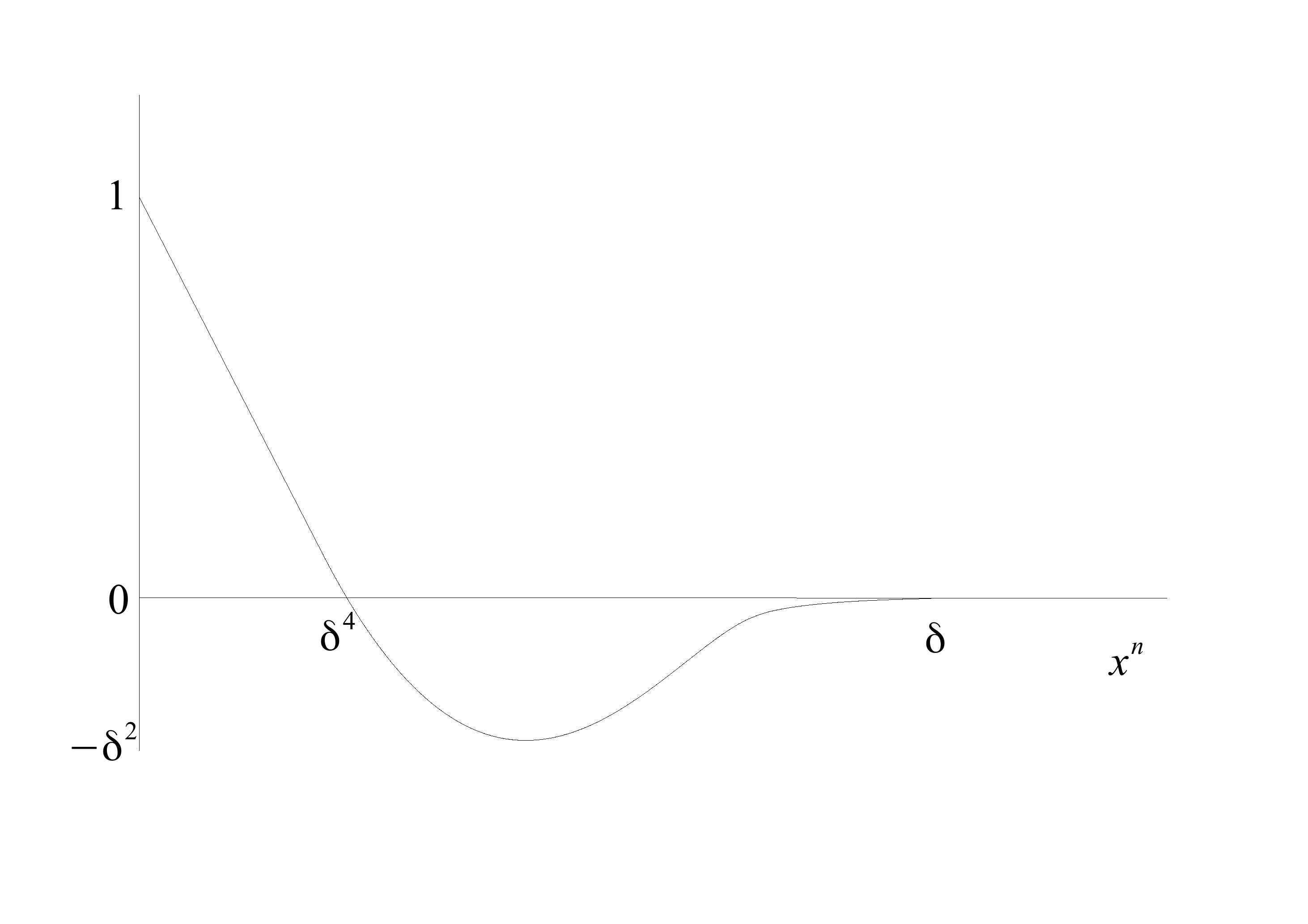}
	\caption{The function $f_\delta$}
	\label{fig1}
\end{figure}


\begin{notation}[Projection operators]\label{proj op}
 Let 
 \[\mathbf P^T :TM_0\rightarrow T\Gamma(d)\subset TM_0\]
 and
 \[\mathbf P^N:TM_0\rightarrow (T\Gamma(d))^\perp\subset TM_0\]
be the projection operators. 
The coefficients of the corresponding $(0,2)$-tensors (with respect to the coordinates chosen above) are 
\[( P^T)_{ij}=
\begin{pmatrix}
  (g_{ij})_{1\leq i,j\leq n-1}     & 0\\
  0    & 0
 \end{pmatrix}
 \]
 and
 $( P^N)_{ij}=\delta_{in}\delta_{jn}$
 \end{notation}

\begin{defin}[The modified metric $g_\delta$, \cite{k}, 3.6]

Let $\mathbf I$ denote the identity on $TM_0$. We define the self-adjoint endomorphism $\mathbf G _\delta$ by
\begin{eqnarray}\label{defG}
\mathbf G_\delta =\mathbf I +2F_\delta(x^n)\mathbf L -2C\mathcal F_\delta(x^n)\mathbf P^T 
\end{eqnarray}
and the modified inner product $\langle\cdot,\cdot\rangle_\delta$ on $TM_0$  by
\[\langle\cdot,\cdot\rangle_\delta=\langle\cdot,\mathbf G_\delta \cdot\rangle_0\]
i.e. in coordinates we have
\[g^\delta_{ij}=g^0_{ij}+2F_\delta (x^n)L_{ij}-2C\mc F_\delta(x^n)(P^T)_{ij}\]
\end{defin}

\noindent
The constant $C$ the definition of $\mathbf G_\delta$ is to be chosen later. Note that the operator 
$\mathbf G_\delta$ is well defined globally on $M_0$, since by our choice of coordinates we have 
$x^n(p)=d_{g_0}(p,\Gamma)$ for a point $p$ near $\Gamma$, and $\Gamma$ is compact by assumption.

\begin{remark}\label{gprops}
$\mathbf G_\delta$ has the following properties: \\
\\
(i) As $\delta$ tends to zero, $\mathbf G_\delta$ converges to $\mathbf I$ uniformly on $M_0$.
\\
\\
(ii) The coefficients of the metric $g_{(\delta)}$ which is defined as $g_{\delta}$ on $M_0$ and $g_1$ on $M_1$ belong to $W^{2,\infty}_{loc}$
\\
\\ 
(iii) The hypersurfaces $\Gamma(d_{g_0})$ and $\Gamma(d_{g_\delta})$ coincide.
\begin{proof}
$(i)$: $\mathbf L$ and $\mathbf P^T$ are bounded near $\Gamma$, and $F_\delta,\mc F_\delta\rightarrow 0$ uniformly as $\delta\rightarrow 0$.\\
\\
$(ii)$:  On $\Gamma$ we have
\[\partial_kg^\delta_{ij}=\partial_kg^0_{ij}=\partial_k g^1_{ij}\]
for $k=1,\dots, n-1$, since $g_0=g_1$ on $\Gamma$. In a point $p\in \Gamma$, using
$L^0_{ij}=-\la \nabla^0_{\partial_i} N,\partial_j\ra_0$ and $L^1_{ij}=\la \nabla^1_{\partial_i} N,\partial_j\ra_1$
one computes
\[\partial_n g_{ij}^0=-2L^0_{ij}\] and
\[\partial_n g_{ij}^1=2L^1_{ij}\] 
Thus, on $\Gamma$ we have
\[\partial_n g^\delta_{ij}=\partial_n g^0_{ij}+ 2L_{ij}=2(L_{ij}-L^0_{ij})=2L^1_{ij}=\partial_n g^1_{ij}\]
which implies that the first derivatives of $g_{(\delta)}$ are continuous on $M$. Since $\Gamma\subset M$ is a smooth submanifold and 
$g_{(\delta)}$ is smooth on $M_0$ and $M_1$, respectively, we have $g_{(\delta)}\in W^{2,\infty}_{loc}$  
\\
\\
$(iii)$ Note that  $g_\delta(\partial_i,N)=\delta_{in}=g_0(\partial_i,N)$ due to $\mathbf LN=0=\mathbf P^TN$. Therefore 
$N$ is also normal with respect to $g_\delta$ to the hypersurfaces $\Gamma(d_{g_\delta})$, which implies
$d_{g_\delta}(\cdot,\Gamma)=d_g(\cdot,\Gamma)$.
\end{proof}
\end{remark}

\begin{defin}\label{def approx}
For two endomorphisms $\mathbf S_\delta, \mathbf T_\delta$ of $TM_0$ which depend on $\delta$ we say 
that \[\mathbf S_\delta\approx \mathbf T_\delta\]
iff $\mathbf S_\delta|_\Gamma=\mathbf T_\delta|_\Gamma$ and all eigenvalues of $\mathbf S_\delta-\mathbf T_\delta$
uniformly tend to zero on compact subsets of $M_0$ as $\delta\rightarrow 0$. 
\\
For two vectorfields $X_\delta,Y_\delta$ on $M_0$ we say that $X_\delta\approx Y_\delta$
iff $X_\delta|_\Gamma=Y_\delta|_\Gamma$ and $\|X_\delta-Y_\delta\|_0\rightarrow 0$ uniformly on compact subsets as $\delta\rightarrow 0$.
\end{defin}
\noindent
Note that $\mathbf S_\delta\approx \mathbf T_\delta$ ($X_\delta\approx Y_\delta$) holds iff in local coordinates
$ (S_{\delta})_{ij}=(T_{\delta})_{ij}$ on $\Gamma$ and
$|(S_\delta)_{ij}-(T_\delta)_{ij}|\rightarrow 0$ 
($X^{i}_{\delta}=Y^{i}_{\delta}$ on $\Gamma$ and $|X^{i}_{\delta}-Y^{i}_{\delta}|\rightarrow 0$).


\begin{lemma}[Auxiliary identities, cf. \cite {k}, Lemma 6.1, 6.2, 6.3]\label{approx}
Let \[X,Y\in \{\partial_1,\ldots,\partial_{n-1}\}\subset T\Gamma(d)\subset TM_0 \]
and \[N=\partial_n\in (T\Gamma(d))^\perp\subset TM_0\]
Then the following (approximate and exact) identities hold

\begin{eqnarray}\label{approx61}
\mathbf G_\delta & \approx & \mathbf I,\quad \nabla_X\mathbf G_\delta \approx 0,\quad \nabla_N\mathbf G_\delta \approx 2f_\delta(x^n)\mathbf L\nonumber  \\
\nabla  _X\nabla_N  \mathbf G_\delta & \approx & 2 f_\delta(x^n) \nabla_X \mathbf L  \\
\nabla_N\nabla_N \mathbf G_\delta &\approx & 2f'_\delta(x^n)\mathbf L - 2Cf_\delta(x^n) \mathbf P^T \nonumber
\end{eqnarray}

\begin{eqnarray}\label{approx62}
\langle \nabla^\delta_X N,Y\rangle_\delta = \langle \nabla^\delta_N X,Y\rangle_\delta =
 \frac 12 \bigl(
          \langle \nabla_N X,\mathbf G_\delta Y\rangle
         +\langle X,\mathbf G_\delta \nabla_N Y\rangle
         +\langle X,(\nabla_N \mathbf G_\delta) Y\rangle           
          \bigr) 
\end{eqnarray}

\begin{eqnarray}\label{approx64}
\nabla^\delta_NN=0
\end{eqnarray}
\begin{eqnarray}\label{approx65}
\nabla^\delta_NX=\nabla^\delta_X N\approx \nabla_X N +f_\delta(x^n)\mathbf L X
\end{eqnarray}

\begin{eqnarray}\label{grad approx}
\mathbf P^T(\nabla^\delta_XY)\approx\mathbf P^T(\nabla_XY) 
\end{eqnarray}
\begin{proof}
Detailed proofs of these  identities are given in \cite{k}.
For the convenience of the reader we prove one of the identities in \eqref{approx61}. $\nabla_X \mathbf G_\delta \approx 0$:

By definition of $\mathbf G_\delta$ we have
\begin{eqnarray}\label{derg}
\nabla_X \mathbf G_\delta =\nabla_X \mathbf I + 2\nabla_X(F_\delta(x^n)\mathbf L) -2C\nabla_X(\mathcal F_\delta(x^n)\mathbf P^T) 
\end{eqnarray}
Let $\xi\in TM_0$. We compute
\begin{eqnarray*}
\nabla_X \mathbf I (\xi)=\nabla_X (\mathbf I\xi)-\mathbf I(\nabla_X \xi)=0
\end{eqnarray*}
and
\begin{eqnarray*}
\nabla_X(F_\delta(x^n)\mathbf L)\xi &=& \nabla_X(F_\delta(x^n)\mathbf L\xi)-F_\delta(x^n)\mathbf L\nabla_X\xi\\
&=& X(F_\delta(x^n))\mathbf L \xi+ F_\delta(x^n)\nabla_X(\mathbf L \xi) - F_\delta(x^n)\mathbf L\nabla_X\xi
\end{eqnarray*}

The first term of the last expression vanishes since $X\in\{\partial_1,\ldots,\partial_{n-1}\}$ and $F_\delta$ only depends on $x^n$. The next two terms tend to zero as $\delta\rightarrow 0$
by definition of $F_\delta$.
By a similar computation one verifies that the last term in \eqref{derg} is 
$\approx 0$ as well.\\

\end{proof}
\end{lemma}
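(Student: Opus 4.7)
The identities in \eqref{approx61} are all direct Leibniz computations starting from
$\mathbf G_\delta = \mathbf I + 2 F_\delta(x^n)\mathbf L - 2C \mathcal F_\delta(x^n)\mathbf P^T$.
The first, $\mathbf G_\delta\approx \mathbf I$, holds because $F_\delta, \mathcal F_\delta\to 0$ uniformly while $\mathbf L, \mathbf P^T$ remain bounded near $\Gamma$. For $\nabla_X\mathbf G_\delta\approx 0$ with $X\in\{\partial_1,\dots,\partial_{n-1}\}$, the scalar derivatives $X(F_\delta(x^n))$ and $X(\mathcal F_\delta(x^n))$ vanish identically, leaving only terms that carry a small factor $F_\delta$ or $\mathcal F_\delta$ multiplying a bounded quantity. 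For $\nabla_N\mathbf G_\delta$, one picks up the leading term $2 f_\delta \mathbf L$ since $N(F_\delta) = f_\delta$; here it is crucial that $\nabla_N\mathbf L=0$ by the extension in Lemma~\ref{defnL}, eliminating the only otherwise-dangerous contribution $2F_\delta\nabla_N\mathbf L$. The mixed and second derivatives follow by differentiating once more, using the same two mechanisms (vanishing tangential scalar derivatives and $\nabla_N\mathbf L=0$) to remove every term that is not already small.

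For the Koszul-type identity \eqref{approx62}, I would write out the Koszul formula for $\nabla^\delta$ applied to the triple $(X,N,Y)$. All brackets vanish because $X,Y,N$ are coordinate fields. Furthermore $\mathbf L N=0=\mathbf P^T N$ forces $\mathbf G_\delta N=N$, so $\langle X,N\rangle_\delta=\langle Y,N\rangle_\delta=0$. Only the term $N\langle X,Y\rangle_\delta = N\langle X,\mathbf G_\delta Y\rangle_0$ survives, and expanding it by Leibniz produces exactly the three summands of \eqref{approx62}.

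The remaining identities now fall out. For $\nabla^\delta_N N=0$ (equation \eqref{approx64}), the same Koszul computation with test vector $Z\in T\Gamma(d)$ gives zero because $\langle N,Z\rangle_\delta=0$, $[N,Z]=0$, and $Z\langle N,N\rangle_\delta=0$ (as $\langle N,N\rangle_\delta\equiv 1$); the case $Z=N$ is immediate. The equality $\nabla^\delta_X N=\nabla^\delta_N X$ in \eqref{approx65} is torsion-freeness together with $[N,X]=0$, and the approximation follows by substituting $\mathbf G_\delta\approx \mathbf I$ and $\nabla_N\mathbf G_\delta\approx 2f_\delta\mathbf L$ into \eqref{approx62} and using the symmetry $\langle\nabla_X N,Y\rangle_0=\tfrac12 N\langle X,Y\rangle_0$ for $X,Y\in T\Gamma(d)$ (the Weingarten identity for the equidistant hypersurface). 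The normal component vanishes since $X\langle N,N\rangle_\delta=0$. Finally, \eqref{grad approx} is obtained by testing $\langle\nabla^\delta_X Y, Z\rangle_\delta$ against $Z\in T\Gamma(d)$ with Koszul and observing that every $\nabla_\cdot\mathbf G_\delta$-term appearing has a tangential foot and is therefore $\approx 0$ by \eqref{approx61}.

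\textbf{Main obstacle.} The delicate bookkeeping issue is the size hierarchy between $F_\delta, \mathcal F_\delta\to 0$, the merely bounded $f_\delta$, and the potentially large $f_\delta'$ (of order $\delta^{-4}$ on $[0,\delta^4]$). A term cannot be discarded merely because it carries some $f_\delta$ or $f_\delta'$: one needs either a small prefactor ($F_\delta$ or $\mathcal F_\delta$) or an exact cancellation from the identities $\mathbf L N=0$ and $\nabla_N\mathbf L=0$. Verifying that every $f_\delta'$-contribution in $\nabla_N\nabla_N\mathbf G_\delta$ is paired precisely with $\mathbf L$, and that no uncontrolled $f_\delta \nabla_N\mathbf L$-type term survives in $\nabla_X\nabla_N\mathbf G_\delta$, is exactly what the construction of the extension in Lemma~\ref{defnL} is engineered to accomplish.
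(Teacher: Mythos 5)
Your proposal is correct and takes essentially the same route as the paper (which itself defers to Kosovski\u{\i}'s Lemmas 6.1--6.3 and only spells out $\nabla_X\mathbf G_\delta\approx 0$): direct Leibniz expansion of $\mathbf G_\delta=\mathbf I+2F_\delta\mathbf L-2C\mathcal F_\delta\mathbf P^T$ for \eqref{approx61}, plus Koszul-formula computations in the Fermi coordinates (vanishing brackets, $\mathbf G_\delta N=N$) for \eqref{approx62}--\eqref{grad approx}, with the smallness of $F_\delta,\mathcal F_\delta$ and the properties $\mathbf LN=0$, $\nabla_N\mathbf L=0$ doing exactly the work you describe. One minor overstatement: in $\nabla_N\mathbf G_\delta$ the contribution $2F_\delta\nabla_N\mathbf L$ would be harmless anyway (it carries the prefactor $F_\delta$, which tends to zero and vanishes on $\Gamma$); the extension property $\nabla_N\mathbf L=0$ is genuinely indispensable only for $\nabla_N\nabla_N\mathbf G_\delta$, where an uncontrolled $f_\delta\,\nabla_N\mathbf L$ term would otherwise survive, just as you note in your final paragraph.
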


\section{The Riemannian curvature operator of $g_\delta$}

In this section we compare the Riemannian curvature operators of $g_\delta$ and $g_0$ on $M_0$ (cf. \S\S~7-8 of \cite{k}). \\

Let us first recall that given a finite dimensional vectorspace $V$ one has the following 
connection between $(0,4)$-tensors on $V$ and linear operators and bilinear forms on $\Lambda^2V$:
Any $(0,4)$-tensor $\{T_{ijkl}\}$ which is antisymmetric in $i,j$ and $k,l$, respectively, induces a bilinear form $\mc T$ on $\Lambda^2V$ via
\[\mc T(e_i\we e_j,e_k\we e_l):=T(e_i,e_j,e_k,e_l)=T_{ijkl}\]
where $e_1,\dots,e_n$ is some basis of $V$. The antisymmetries of $T$ ensure that 
\[\mc T(e_i\we e_j,e_k\we e_l)=-\mc T(e_j\we e_i,e_k\we e_l)=-\mc T(e_i\we e_j,e_l\we e_k)\]
If in addition $T_{ijkl}=T_{klij}$ then the induced bilinearform $\mc T$ is symmetric. For arbitrary $\alpha,\beta\in \Lambda^2V$,
$\alpha=\sum_{i<j}\alpha^{ij}e_i\we e_j=\alpha^{ij}e_i\otimes e_j$, $\beta=\sum_{i<j}\beta^{ij}e_i\we e_j=\beta^{ij}e_i\otimes e_j$
($\alpha^{ij}=-\alpha^{ji}$ and $\beta^{ij}=-\beta^{ji}$)
one computes
\[\mc T(\alpha,\beta)=\frac 14 T_{ijkl}\alpha^{ij}\beta^{kl}\]
(here and in hat follows we make use of the summation convention). As mentioned in the introduction, an inner product $g$ on $V$ induces
an inner product $\mc I^g$ on $\Lambda^2V$:
\[\mc I^g(e_i\we e_j,e_k\we e_l)=g_{ik}g_{jl}-g_{jk}g_{il}\]
Using this inner product we may identify linear operators and bilinear forms on $\Lambda^2V$ by putting
\[\mc I^g(e_i\we e_j,\mc T(e_k\we e_l))=\mc T(e_i\we e_j,e_k\we e_l)\]
The bilinear form is symmetric iff the operator is self-adjoint.

Conversely, any bilinear form $\mc T$ on $\Lambda^2V$ (or the corresponding linear operator) induces a $(0,4)$-tensor on $V$ via
\[T(e_i,e_j,e_k,e_l):=\mc T(e_i\we e_j,e_k\we e_l)=\mc I^g(e_i\we e_j,\mc T(e_k\we e_l))\]
The such defined tensor has the symmetries $T_{ijkl}=-T_{jikl}=-T_{ijlk}$, and if in addition the bilinear form is symmetric, then we also have
$T_{ijkl}=T_{klij}$.

In view of these identifications, in what follows we will often switch between operators and bilinear forms on $\Lambda^2(TM)$ and $(0,4)$-tensors on $TM$.
\\
\\
In Section \ref{ricci section}
we will use the following 
\begin{lemma}\label{traces}
Let $\mc T$ be a bilinear form on $\Lambda^2 V $ and $(T_{ijkl})$ the corresponding $(0,4)$-tensor on $V$. If $\mc T$ is positive semidefinte,
then so is the bilinear form $\tr_{24} T:=g^{jl}T(\cdot,e_j,\cdot, e_l): V\times V\rightarrow \mathbb R$.
\begin{proof}
Let $\{e_1,\dots,e_n\}$ be a basis of $V$ such that $g_{ij}=g(e_i,e_j)=\delta_{ij}$. Let $\xi=\xi^k e_k\in V$. For every $1\leq j\leq n$ we define the 2-vector 
$\alpha_j=\xi^k e_k\we e_j\in \Lambda^2 V$. By assumption $\mc T(\alpha_j,\alpha_j)\geq 0$. We  compute 
\begin{eqnarray*}
\tr_{24}T(\xi,\xi)&=& g^{jl}T(\xi,e_j,\xi,e_l)=\sum_{j}\xi^k\xi^lT(e_k,e_j,e_l,e_j)\\
&=&\sum_j \xi^k\xi^l\mc T(e_k\we e_j,e_l\we e_j) =\sum_j \mc T(\alpha_j,\alpha_j)\geq 0
\end{eqnarray*}
\end{proof}
\end{lemma}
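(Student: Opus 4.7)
The plan is to exploit the freedom to choose a $g$-orthonormal basis of $V$ to make the trace a diagonal sum, and then recognize each diagonal summand as an evaluation of $\mc T$ on a simple $2$-vector built from the input vector.

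First I would pick an orthonormal basis $\{e_1,\dots,e_n\}$ of $V$, so that $g_{ij}=\delta_{ij}$ and $g^{jl}=\delta^{jl}$. This reduces the definition of the trace to $\tr_{24}T(\xi,\xi)=\sum_{j} T(\xi,e_j,\xi,e_j)$. Next I would translate each summand from the $(0,4)$-tensor language back to the bilinear-form language via the identification spelled out in the preceding paragraphs: $T(\xi,e_j,\xi,e_j)=\mc T(\xi\we e_j,\xi\we e_j)$, using multilinearity in the first and third slots together with the antisymmetry of $T$ in its first/second (and third/fourth) pair of indices, which makes $\xi\we e_j$ a well-defined element of $\Lambda^2V$.

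Setting $\alpha_j:=\xi\we e_j\in\Lambda^2V$, I would then conclude
\[\tr_{24}T(\xi,\xi)=\sum_{j=1}^n\mc T(\alpha_j,\alpha_j)\geq 0,\]
where each summand is nonnegative because $\mc T$ is positive semidefinite on $\Lambda^2V$ by hypothesis. Since $\xi\in V$ was arbitrary, this shows $\tr_{24}T$ is positive semidefinite.

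There is essentially no hard step here; the only subtlety is being careful that the trace $g^{jl}T(\cdot,e_j,\cdot,e_l)$ is basis-independent (which justifies choosing an orthonormal basis at the outset) and that the identification between $(0,4)$-tensors with the symmetries of $R$ and bilinear forms on $\Lambda^2V$ intertwines the contraction with the pairing on $\Lambda^2V$ in the expected way. Both are immediate from the definition $\mc T(e_i\we e_j,e_k\we e_l)=T_{ijkl}$ and from the fact that positive semidefiniteness is preserved under pulling back by the linear map $V\ni\xi\mapsto\xi\we e_j\in\Lambda^2V$.
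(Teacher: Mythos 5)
Your proposal is correct and follows essentially the same route as the paper: choose a $g$-orthonormal basis, rewrite the contraction as $\sum_j \mc T(\xi\we e_j,\xi\we e_j)$ (the paper's $\alpha_j=\xi^k e_k\we e_j$ is exactly your $\xi\we e_j$), and invoke positive semidefiniteness of $\mc T$ termwise.
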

\noindent
Note that this lemma also holds if we replace  $\tr_{24}T$ by $\tr_{13}T$.
We will also make use of the Kulkarni-Nomizu product on $\textnormal{End}(TM)$, which is defined as follows:\\
\\
The Kulkarni-Nomizu product of two linear endomorphisms $\mathbf A$, $\mathbf B$ of  $V$ 
is the linear endomorphism $\mathbf A\we\mathbf B: \Lambda^2 V\rightarrow \Lambda^2V$, which is defined as
\begin{eqnarray*}
(\mathbf A\we \mathbf B)(e_i\we e_j):=\frac 12\bigl(\mathbf A(e_i)\we\mathbf B(e_j)+\mathbf B(e_i)\we \mathbf A(e_j)\bigr)
\end{eqnarray*}
for basis vectors $e_i\we e_j$, and extends to $\Lambda^2 V$ by linearity.
The factor $\frac 12$ ensures that we have $\id_V\we\id_V=\id_{\Lambda^2V}$. 
The corresponding bilinear form on $\Lambda^2V$ is given by
\begin{eqnarray*}
\mathbf A\we\mathbf B(e_i\we e_j,e_k\we e_l)&:=&\mc I^g\bigl(e_i\we e_j,(\mathbf A\we \mathbf B)(e_k\we e_l)\bigr)\\
&=&\frac 12(A_{ik} B_{jl}-A_{jk} B_{il}+B_{ik} A_{jl}-B_{jk}  A_{il})
\end{eqnarray*}
where  $A,B$ are the bilinear forms on $V$ corresponding with $\mathbf A,\mathbf B$ (cf. Notation \ref{optens}).
Note that the induced $(0,4)$-tensor $\{(\mathbf A\we \mathbf B)_{ijkl}\}$ is antisymmetric in $i,j$ and $k,l$, respectively. If in addition
$A$ and $B$ are symmetric, then we also have the symmetry $(\mathbf A\we \mathbf B)_{ijkl}=(\mathbf A\we \mathbf B)_{klij}$.
\\
\\
Throughout this work we will frequently make use of the following
\begin{lemma}\label{produkt geq 0}
Let $\mathbf A,\mathbf B$ be two self-adjoint endomorphisms of $(V,g)$. If $\mathbf A,\mathbf B\geq 0$ (in the sense of eigenvalues) then $\mathbf A\we\mathbf B\geq 0$.
\begin{proof}
It suffices to show that $(\mathbf A\we\mathbf B)(\alpha,\alpha)\geq 0$ for any $\alpha\in \Lambda^2V$.
Let $\{e_1,\dots,e_n\}$ be an orthonormal basis of $(V,g)$ such that $A_{ij}=\lambda_i\delta_{ij}$ with respect to this basis (where $\lambda_i\geq 0$ by assumption),
and $\alpha=\sum_{i<j}\alpha^{ij}e_i\we e_j=\alpha^{ij}e_i\otimes e_j\in \Lambda^2V$.
As mentioned above, a bilinear form $\mc T$ induced by a $(0,4)$-tensor $T$ satisfies
\[\mc T(\alpha,\beta)=\frac 14T_{ijkl}\alpha^{ij}\beta^{kl}\]
where $T_{ijkl}:=\mc T(e_i\we e_j,e_k\we e_l)$. 
We compute
\begin{eqnarray*}
(\mathbf A\we \mathbf B)(\alpha,\alpha)&=&\frac 18(A_{ik} B_{jl}\alpha^{ij}\alpha^{kl}-A_{jk} B_{il}\alpha^{ij}\alpha^{kl}+B_{ik} A_{jl}\alpha^{ij}\alpha^{kl}-B_{jk}  A_{il}\alpha^{ij}\alpha^{kl})\\
&=&\frac 12A_{ik}B_{jl}\alpha^{ij}\alpha^{kl}\\
&=&\frac 12\lambda_i B_{jl}\alpha^{ij}\alpha^{il}\geq 0
\end{eqnarray*}
where we used $\alpha^{ij}=-\alpha^{ji}$ and the fact that for every fixed $i$ we have $B_{jl}\alpha^{ij}\alpha^{il}\geq 0$ by assumption.
\end{proof}
\end{lemma}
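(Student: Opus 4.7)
The plan is to reduce the inequality to a direct computation in a basis that diagonalizes $\mathbf A$. Since $\mathbf A \wedge \mathbf B$ corresponds to a symmetric bilinear form on $\Lambda^2 V$ (both factors are self-adjoint, so by the discussion preceding the lemma the Kulkarni-Nomizu product is self-adjoint as well), positive semidefiniteness is equivalent to $(\mathbf A \we \mathbf B)(\alpha,\alpha)\geq 0$ for every $\alpha \in \Lambda^2 V$. Because $\mathbf A$ is self-adjoint with nonnegative spectrum, I would choose a $g$-orthonormal basis $\{e_1,\dots,e_n\}$ such that $A_{ij}=\lambda_i\delta_{ij}$ with $\lambda_i\geq 0$, and write $\alpha = \alpha^{ij}e_i\otimes e_j$ with $\alpha^{ij}=-\alpha^{ji}$.

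Next, I would substitute into the four-term formula for the Kulkarni-Nomizu bilinear form recalled just above the lemma, combined with the dictionary $\mc T(\alpha,\alpha)=\tfrac14 T_{ijkl}\alpha^{ij}\alpha^{kl}$ from the beginning of the section. The antisymmetry $\alpha^{ij}=-\alpha^{ji}$, together with the symmetries of $A$ and $B$, makes all four terms in the Kulkarni-Nomizu expansion equal after relabeling of dummy indices, and the expression collapses to $\tfrac12 A_{ik}B_{jl}\alpha^{ij}\alpha^{kl}$. Using that $A$ is diagonal, this becomes $\tfrac12 \sum_i \lambda_i\, B(v_i,v_i)$, where $v_i := \alpha^{ij}e_j \in V$. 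Since $\lambda_i\geq 0$ for every $i$ and $\mathbf B\geq 0$, each summand is nonnegative, which gives the conclusion.

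The main obstacle is purely bookkeeping: one has to verify carefully that the $\tfrac12$ in the definition of $\mathbf A\we\mathbf B$, the $\tfrac14$ in the tensor/2-vector dictionary, and the signs produced by the antisymmetry really do combine so that the four Kulkarni-Nomizu terms collapse to a single manifestly nonnegative expression. A more conceptual alternative would be to decompose $\mathbf A = \sum_i \lambda_i\, e_i\otimes e_i^\flat$ and exploit bilinearity of $\we$ to reduce to the rank-one case, but this only repackages the same computation; the direct diagonalization route is cleaner.
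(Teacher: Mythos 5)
Your proposal is correct and follows essentially the same route as the paper: diagonalize $\mathbf A$ in a $g$-orthonormal basis, use the factor-$\tfrac14$ dictionary together with the four-term Kulkarni--Nomizu formula to collapse the expression to $\tfrac12 A_{ik}B_{jl}\alpha^{ij}\alpha^{kl}$, and conclude since each term $\lambda_i B(v_i,v_i)$ with $v_i=\alpha^{ij}e_j$ is nonnegative. This is precisely the paper's argument, so there is nothing to add.
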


Let us now consider the Riemannian curvature operator of $g_\delta$. 
For ease of notation here and in what follows we shall omit the index 0 for quantities related to $M_0$. For example, we write
$\langle\cdot,\cdot\rangle$ for $\langle\cdot,\cdot\rangle_0$ and $\mc R$ for $\mathcal R_0$.
We define $\mc S_\delta\approx\mc T_{\delta}$ for selfadjoint operators on $\Lambda^2(TM_0)$ in a similar way as in Definition \ref{def approx}.
The main result of this section is 
\begin{thm}\label{operatorzerlegung}
Let $\mathcal R_\delta=\mathcal R(g_\delta)$. Then
\begin{eqnarray}\label{RdeltaOp}
\mathcal R_\delta &\approx& \mathcal R-f_\delta^2\mc A + f_\delta \mc B -2f_\delta'\mc L +2f_\delta^2\mc L^2 + 2Cf_\delta\hat {\mc I}
\end{eqnarray}
holds on $M_0$, where 
\begin{eqnarray*}
\mc A &:=&\mathbf L\we\mathbf L \\
\mc L &:=&\mathbf L\we \mathbf P^N \\
\mc L^2 &:=& \mathbf L^2\we \mathbf P^N\\
\hat{\mc I} &:=& \mathbf P^T\we\mathbf P^N
\end{eqnarray*}
(cf. Notation \ref{proj op} for the definitions of $\mathbf P^T$ and $\mathbf P^N$),
and $\mc B$ is a smooth operator on $\Lambda^2(TM)$ depending on $\mathbf L$ which we will define later.

\end{thm}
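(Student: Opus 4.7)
The plan is to realise $\nabla^\delta$ as a perturbation of $\nabla=\nabla^{g_0}$, expand the curvature formula, and then reassemble the result in the operator picture on $\Lambda^2(TM_0)$ via Kulkarni--Nomizu products. Write $\mathbf G_\delta = \mathbf I + \mathbf H_\delta$ with $\mathbf H_\delta := 2F_\delta(x^n)\mathbf L - 2C\mc F_\delta(x^n)\mathbf P^T$, and set $S(X,Y):=\nabla^\delta_X Y-\nabla_X Y$; by Koszul's formula, $S$ is determined algebraically from $\nabla\mathbf G_\delta$. The three consequences of Lemma \ref{approx} that will be used repeatedly are $S(X,N)\approx f_\delta\mathbf L X$ (from \eqref{approx65}), $\mathbf P^T S(X,Y)\approx 0$ (from \eqref{grad approx}), and the exact identity $\nabla^\delta_N N=0$ (from \eqref{approx64}).

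Given $S$, the standard identity
\[
R^\delta(X,Y)Z-R(X,Y)Z=(\nabla_X S)(Y,Z)-(\nabla_Y S)(X,Z)+S(X,S(Y,Z))-S(Y,S(X,Z))
\]
reduces the problem to evaluating $\mc R_\delta-\mc R$ on the blocks of $\Lambda^2(TM_0)$ distinguished by how many of the four slots are occupied by $N$; since $N$ is $g_\delta$-orthogonal to $T\Gamma(d)$ by Remark \ref{gprops}(iii), these blocks do not mix, and each matches a Kulkarni--Nomizu product of some operator with $\mathbf P^N$ or $\mathbf P^T$. In the all-tangential block I would invoke the Gauss equation for $\Gamma(d)\hookrightarrow (M_0,g_\delta)$ and for $\Gamma(d)\hookrightarrow (M_0,g_0)$: since $g_\delta|_{T\Gamma(d)}\approx g_0|_{T\Gamma(d)}$ the intrinsic curvatures of $\Gamma(d)$ coincide up to $\approx$, while the extrinsic second fundamental forms differ exactly by $f_\delta\mathbf L$ thanks to \eqref{approx65}; the quadratic difference of the two Gauss equations therefore produces $-f_\delta^2\mathbf L\we\mathbf L = -f_\delta^2\mc A$, and the linear-in-$f_\delta$ cross terms (bilinear in $\mathbf L$ and the unperturbed shape operator of $\Gamma(d)$) are smooth in $\mathbf L$ and absorbed into $\mc B$. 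The one-$N$ block is handled by Codazzi and contributes only an $f_\delta\nabla\mathbf L$ piece, again absorbed into $\mc B$.

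The two-$N$ block $\la R^\delta(X,N)N,Y\ra_\delta$ is where the three distinguished terms are generated. Using $\nabla^\delta_NN=0$ and $[X,N]=0$ in coordinates I have $R^\delta(X,N)N=-\nabla^\delta_N\nabla^\delta_N X$; expanding via $S(N,\cdot)\approx f_\delta\mathbf L(\cdot)$ and using $\nabla_N\mathbf L=0$ from Definition--Lemma \ref{defnL}, the normal derivative of $S(N,X)$ yields a term proportional to $f'_\delta\mathbf L X$, the composition of two first-order corrections yields a term proportional to $f_\delta^2\mathbf L^2 X$, and the second $N$-derivative of $\mathbf H_\delta$---which by \eqref{approx61} equals $2f'_\delta\mathbf L-2Cf_\delta\mathbf P^T$---contributes a piece proportional to $Cf_\delta\mathbf P^T X$. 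Pairing with $Y$ and rewriting the resulting bilinear forms on $T\Gamma(d)$ as Kulkarni--Nomizu products of the form $\mathbf A\we\mathbf P^N$ (which carries the factor of $\tfrac12$ built into the definition of $\we$) recovers exactly the coefficients $-2f'_\delta$, $+2f_\delta^2$ and $+2Cf_\delta$ in front of $\mc L$, $\mc L^2$ and $\hat{\mc I}$ respectively.

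Finally, all remaining linear-in-$f_\delta$ Gauss and Codazzi contributions---which are smooth in $\mathbf L$---are collected into the single operator $\mc B$, while everything carrying an extra factor of $F_\delta=O(\delta^3)$ or $\mc F_\delta=O(\delta^4)$ vanishes uniformly and is discarded under $\approx$. The main obstacle is entirely combinatorial: one has to keep careful track of signs and distinguish the genuinely $O(f_\delta)$ or $O(f'_\delta)$ pieces (which survive in \eqref{RdeltaOp}) from those of order $O(F_\delta)$ or involving products like $f_\delta\mc F_\delta$ (which vanish under $\approx$), and verify that the tangential--tangential, tangential--normal and normal--normal components really do reassemble into $\mathbf L\we\mathbf L$, $\mathbf L\we\mathbf P^N$, $\mathbf L^2\we\mathbf P^N$ and $\mathbf P^T\we\mathbf P^N$ with precisely the scalar coefficients stated.
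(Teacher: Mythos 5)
Your plan is essentially the paper's own proof: the paper also splits the computation according to how many slots carry $N$ (Lemmas \ref{kein n}, \ref{ein n}, \ref{zwei n}), uses the Gauss equation for $\Gamma(d)$ together with \eqref{approx65} in the all-tangential block, direct computation with the approximate identities \eqref{approx61}--\eqref{grad approx} and $\nabla_N\mathbf L=0$ in the one-$N$ and two-$N$ blocks, and then sweeps every smooth $O(f_\delta)$ remainder (the $\mathbf L\we\nabla N$, $\nabla\mathbf L$, and $\la\mathbf L\partial_j,\nabla_{\partial_l}N\ra$-type terms) into $\mc B$. The only detail you leave implicit, which the paper spells out, is the explicit symmetrized definition of the tensor $B_{ijkl}$ and the check that it has the curvature symmetries, so that $\mc B$ really is a well-defined symmetric bilinear form on $\Lambda^2(TM)$.
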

\begin{lemma}\label{kein n}
For $i,j,k,l\in\{1,\dots,n-1\}$ we have
\begin{eqnarray}\label{kein n glchg}
R^\delta_{ijkl} \approx 
R_{ijkl} -f^2_\delta (\mathbf L\we\mathbf L)_{ijkl} -2 f_\delta (\mathbf L\we \nabla N)_{ijkl}
\end{eqnarray}
where $\nabla N$ is the endomorphism $X\in TM \mapsto \nabla_XN\in TM$ (recall that $N$ is the unit vector field orthogonal to the hypersurfaces of $M_0$ equidistant to $\Gamma$, 
cf. Notation \ref{normal def}).
\begin{proof}
We proceed as in \cite{k}, Lemma 7.1.
Let $p\in M_0$ be a point near $\Gamma$ and $d=\dist(x,\Gamma)=x^n(p)$. Let $k,l\leq n-1$. Recall that by Definition \ref{defG} we have
\[g^\delta_{kl}=g_{kl}+2F_\delta(x^n)L_{kl}-2C\mc F_\delta(x^n)g_{kl}\] 
Therefore, for $i,j\leq n-1$
\[\partial_i g^\delta_{kl}\approx \partial_i g_{kl}\] and
\[ \partial_i\partial_j g^\delta_{kl}\approx \partial_i\partial_j g_{kl}\]
and thus 
\[R^\delta_{\Gamma(d)}
\approx 
R_{\Gamma(d)}\]
Using the Gauss theorem and \eqref{approx65} we compute in $p$
\begin{eqnarray*}
R^\delta_{ijkl}&=&\langle R^\delta(\partial_i,\partial_j)\partial_k,\partial_l\rangle_\delta\\
&=&\langle R_{\Gamma(d)}^\delta(\partial_i,\partial_j)\partial_k,\partial_l\rangle_\delta
-\langle\nabla^\delta_{\partial_i}N,\partial_k\rangle_\delta  \langle\nabla^\delta_{\partial_j}N,\partial_l\rangle_\delta
+ \langle\nabla^\delta_{\partial_j}N,\partial_k\rangle_\delta \langle\nabla^\delta_{\partial_i}N,\partial_l\rangle_\delta 
\\
&\approx&
 \langle R_{\Gamma(d)}(\partial_i,\partial_j)\partial_k,\partial_l\rangle
-(\langle\nabla_{\partial_i}N,\partial_k\rangle +f_\delta\langle\partial_i,\mathbf L\partial_k\rangle)
(\langle\nabla_{\partial_j}N,\partial_l\rangle +f_\delta\langle\partial_j,\mathbf L\partial_l\rangle)\\
&+&(\langle\nabla_{\partial_j}N,\partial_k\rangle +f_\delta\langle\partial_j,\mathbf L\partial_k\rangle) 
   (\langle\nabla_{\partial_i}N,\partial_l\rangle +f_\delta\langle\partial_i,\mathbf L\partial_l\rangle)\\
&=&\langle R_{\Gamma(d)}(\partial_i,\partial_j)\partial_k,\partial_l\rangle
-\langle\nabla_{\partial_i}N,\partial_k\rangle  \langle\nabla_{\partial_j}N,\partial_l\rangle
+\langle\nabla_{\partial_j}N,\partial_k\rangle  \langle\nabla_{\partial_i}N,\partial_l\rangle\\
&-&f^2_\delta(\langle\partial_i,\mathbf L\partial_k\rangle \langle\partial_j,\mathbf L\partial_l\rangle
- \langle\partial_j,\mathbf L\partial_k\rangle \langle\partial_i,\mathbf L\partial_l\rangle) \\
&-& f_\delta(\langle\partial_i,\mathbf L\partial_k\rangle   \langle\nabla_{\partial_j}N,\partial_l\rangle
            - \langle\partial_j,\mathbf L\partial_k\rangle   \langle\nabla_{\partial_i}N,\partial_l\rangle 
            + \langle\nabla_{\partial_i}N,\partial_k\rangle   \langle\partial_j,\mathbf L\partial_l\rangle
            -\langle\nabla_{\partial_j}N,\partial_k\rangle   \langle\partial_i,\mathbf L\partial_l\rangle)
 \end{eqnarray*}
\end{proof}
\end{lemma}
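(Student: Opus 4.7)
The plan is to apply the Gauss equation on the equidistant hypersurfaces $\Gamma(d)$ with respect to both metrics $g_0$ and $g_\delta$, and then compare the intrinsic and extrinsic contributions separately using the approximate identities of Lemma \ref{approx}.

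First, by Remark \ref{gprops}(iii) the hypersurfaces $\Gamma(d)$ are the same for the two metrics and $N=\partial_n$ is a common unit normal. Since $i,j,k,l\leq n-1$, the vectors $\partial_i,\partial_j,\partial_k,\partial_l$ are tangent to $\Gamma(d)$, so the Gauss equation gives
\begin{eqnarray*}
R^\delta_{ijkl} = R^{\delta,\Gamma(d)}_{ijkl} - \langle\nabla^\delta_{\partial_i}N,\partial_k\rangle_\delta\langle\nabla^\delta_{\partial_j}N,\partial_l\rangle_\delta + \langle\nabla^\delta_{\partial_j}N,\partial_k\rangle_\delta\langle\nabla^\delta_{\partial_i}N,\partial_l\rangle_\delta,
\end{eqnarray*}
and the analogous formula for $R_{ijkl}$ with all $\delta$-superscripts removed. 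It then suffices to show $R^{\delta,\Gamma(d)}_{ijkl}\approx R^{\Gamma(d)}_{ijkl}$ and to compare the extrinsic terms.

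For the intrinsic comparison I would inspect the coordinate expression $g^\delta_{kl}-g_{kl} = 2F_\delta(x^n)L_{kl}-2C\mc F_\delta(x^n)g_{kl}$ valid for $k,l\leq n-1$. The functions $F_\delta,\mc F_\delta$ depend only on $x^n$ and tend to zero uniformly, so every tangential derivative of the difference carries a factor of $F_\delta$ or $\mc F_\delta$; hence on $\Gamma(d)$ the induced metric together with its tangential derivatives of order at most two agrees with that of $g_0$ up to $\approx 0$. Since the intrinsic curvature tensor of $\Gamma(d)$ depends only on these data, the claim follows. For the extrinsic terms I would substitute \eqref{approx65}, $\nabla^\delta_{\partial_i}N\approx\nabla_{\partial_i}N + f_\delta(x^n)\mathbf L\partial_i$, together with $\mathbf G_\delta\approx\mathbf I$ from \eqref{approx61}, which gives $\langle\cdot,\cdot\rangle_\delta\approx\langle\cdot,\cdot\rangle$, into each of the four products. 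Expanding and taking the Gauss-type antisymmetric combination, the $f_\delta^2$-contribution reproduces $-f_\delta^2[L_{ik}L_{jl}-L_{jk}L_{il}] = -f_\delta^2(\mathbf L\we\mathbf L)_{ijkl}$, while the cross-terms reorganize into $-2f_\delta(\mathbf L\we\nabla N)_{ijkl}$, using that both $\mathbf L$ and $\nabla N$ are self-adjoint (the latter by the Weingarten relation combined with $|N|\equiv 1$) so that the two halves of the Kulkarni-Nomizu formula coincide.

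The main obstacle is the intrinsic comparison. Although $g_\delta-g_0$ is uniformly small, its $\partial_n$-derivatives involve $f_\delta$ and $f_\delta'$, the latter possibly of size $\delta^{-4}$, so a naive "metric close implies curvature close" argument fails outright. The payoff of the specific ansatz for $g_\delta$ is precisely that these large normal derivatives never enter the \emph{tangential} second-order derivatives defining the intrinsic curvature of $\Gamma(d)$; only the small factors $F_\delta,\mc F_\delta$ themselves propagate. Separating these contributions cleanly in the chart is the heart of the lemma.
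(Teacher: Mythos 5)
Your proposal is correct and follows essentially the same route as the paper: the Gauss equation on the common equidistant hypersurfaces $\Gamma(d)$, the observation that only the uniformly small factors $F_\delta,\mathcal F_\delta$ enter the tangential first and second derivatives of $g_\delta$ (so $R^\delta_{\Gamma(d)}\approx R_{\Gamma(d)}$), and the substitution of \eqref{approx65} into the extrinsic terms to produce the $-f_\delta^2\,\mathbf L\we\mathbf L$ and $-2f_\delta\,\mathbf L\we\nabla N$ contributions. No gaps to report.
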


\begin{lemma}\label{ein n}
For $i,j,l\in\{1,\dots,n-1\}$ we have
\begin{eqnarray}
R^\delta_{ijnl}
\approx R_{ijnl}+f_\delta \bigl(\langle \partial_i,(\nabla_{\partial_j} \mathbf L )\partial_l\rangle
                               -\langle \partial_j,(\nabla_{\partial_i} \mathbf L )\partial_l\rangle\bigr)
\end{eqnarray}

\begin{proof}
We proceed as in \cite{k}, Lemma 7.3.
Let $i,j,l\in \{1,\ldots,n-1\}$. By definition of the Riemannian curvature tensor we have
\begin{eqnarray}
\langle R^\delta(\partial_i,\partial_j)\partial_n,\partial_l\rangle_\delta &=&
\langle \nabla^\delta_{\partial_j} \nabla^\delta_{\partial_i} N,\partial_l \rangle_\delta
-\langle \nabla^\delta_{\partial_i} \nabla^\delta_{\partial_j} N,\partial_l \rangle_\delta
 \label{someref}\\
&=&\partial_j \langle\nabla^\delta_{\partial_i} N,\partial_l\rangle_\delta   -  \partial_i \langle\nabla^\delta_{\partial_j} N,\partial_l\rangle_\delta 
-\langle \nabla^\delta_{\partial_i} N,\nabla^\delta_{\partial_j}\partial_l  \rangle_\delta\nonumber
+\langle \nabla^\delta_{\partial_j} N,\nabla^\delta_{\partial_i}\partial_l  \rangle_\delta
\end{eqnarray}
\noindent
1) For the first two terms on the right hand side we compute
using \eqref{approx62}:

\begin{eqnarray*}
&{}&\partial_j \langle\nabla^\delta_{\partial_i} N,\partial_l\rangle_\delta  -  \partial_i \langle\nabla^\delta_{\partial_j} N,\partial_l\rangle_\delta\\
&=&\frac 12 \partial_j(\langle\nabla_N \partial_i,\mathbf G_\delta \partial_l\rangle
                        +\langle\partial_i,\mathbf G _\delta \nabla_N \partial_l\rangle
                        +\langle \partial_i,(\nabla_N \mathbf G_\delta)\partial_l  \rangle ) \\ 
&-& \frac 12 \partial_i(\langle\nabla_N \partial_j,\mathbf G_\delta \partial_l\rangle
                        +\langle\partial_j,\mathbf G _\delta \nabla_N \partial_l\rangle
                        +\langle \partial_j,(\nabla_N \mathbf G_\delta)\partial_l  \rangle )
\end{eqnarray*}

After termwise differentiation we get three different types of terms:\\
\\
a) Terms in which $\mathbf G _\delta$ is not differentiated: Since $\mathbf G_\delta \approx \mathbf I$, 
their sum is approximately equal to
\begin{eqnarray*}
\partial_j \langle\nabla_{\partial_i} N,\partial_l\rangle  -  \partial_i \langle\nabla_{\partial_j} N,\partial_l\rangle
\end{eqnarray*} 
\noindent
b) Terms in which $\mathbf G_\delta$ is differentiated only with respect to $\partial_i$ are approximately
equal to $0$ by \eqref{approx61} since we have $i\in\{1,\dots,n-1\}$. 
\\
\\
c) Terms which involve mixed derivatives  of $\mathbf G_\delta $ with respect to both $\partial_i$ and $N$.
In view of \eqref{approx61} their sum is
\begin{eqnarray*}
&\approx& f_\delta\bigl(
\langle \nabla_{\partial_j}\partial_i,\mathbf L \partial_l\rangle
               +\langle \partial_i,(\nabla_{\partial_j} \mathbf L )\partial_l\rangle
               + \langle\partial_i,\mathbf L (\nabla_{\partial_j}\partial_l )\rangle\\
&-&\langle \nabla_{\partial_i}\partial_j,\mathbf L \partial_l\rangle 
                -\langle \partial_j,(\nabla_{\partial_i} \mathbf L )\partial_l\rangle
                -\langle\partial_j,\mathbf L (\nabla_{\partial_i}\partial_l)\rangle \bigr)\nonumber\\
&=& f_\delta\bigl(\langle \partial_i,(\nabla_{\partial_j} \mathbf L )\partial_l\rangle
               + \langle\partial_i,\mathbf L (\nabla_{\partial_j}\partial_l )\rangle
                -\langle \partial_j,(\nabla_{\partial_i} \mathbf L )\partial_l\rangle
                -\langle\partial_j,\mathbf L (\nabla_{\partial_i}\partial_l)\rangle \bigr)
                \nonumber
\end{eqnarray*}
where we used that $\partial_i$ and $\partial_j$ commute. Combining a), b) and c) gives us
\begin{eqnarray}\label{abc}
\partial_j \langle\nabla^\delta_{\partial_i} N,\partial_l\rangle_\delta  -  \partial_i \langle\nabla^\delta_{\partial_j} N,\partial_l\rangle_\delta&\approx&
\partial_j \langle\nabla_{\partial_i} N,\partial_l\rangle  -  \partial_i \langle\nabla_{\partial_j} N,\partial_l\rangle\\
&+&f_\delta\bigl(\langle \partial_i,(\nabla_{\partial_j} \mathbf L )\partial_l\rangle
               + \langle\partial_i,\mathbf L (\nabla_{\partial_j}\partial_l )\rangle
                -\langle \partial_j,(\nabla_{\partial_i} \mathbf L )\partial_l\rangle
                -\langle\partial_j,\mathbf L (\nabla_{\partial_i}\partial_l)\rangle \bigr)\nonumber
 \end{eqnarray}
2) Let us now consider the last two terms on the right hand side of \eqref{someref}.
Since 
$\langle \nabla^\delta_{\partial_i}N,N\rangle_\delta=0$ we have
\[\langle \nabla^\delta_{\partial_i}N,\nabla^\delta_{\partial_j}\partial_l\rangle_\delta
=\langle \nabla^\delta_{\partial_i}N,\mathbf P^T(\nabla^\delta_{\partial_j}\partial_l)\rangle_\delta\]
Therefore, in view of \eqref{approx65} and \eqref{grad approx}
\begin{eqnarray}\label{gg}
&{}&-\langle \nabla^\delta_{\partial_i} N,\nabla^\delta_{\partial_j}\partial_l  \rangle_\delta+\langle \nabla^\delta_{\partial_j} N,\nabla^\delta_{\partial_i}\partial_l  \rangle_\delta
 \nonumber\\
&=&
-\langle \nabla^\delta_{\partial_i} N,\mathbf P^T(\nabla^\delta_{\partial_j}\partial_l ) \rangle_\delta
+\langle \nabla^\delta_{\partial_j} N,\mathbf P^T(\nabla^\delta_{\partial_i}\partial_l  )\rangle_\delta
\\
&\approx&-\bigl(\langle \nabla_{\partial_i} N,\mathbf P^T(\nabla_{\partial_j}\partial_l ) \rangle
+ f_\delta \langle \mathbf L \partial_i, \mathbf P^T(\nabla_{\partial_j}\partial_l)\rangle \bigr)
+\bigl(\langle \nabla_{\partial_j} N,\mathbf P^T(\nabla_{\partial_i}\partial_l ) \rangle
+ f_\delta \langle \mathbf L \partial_j, \mathbf P^T(\nabla_{\partial_i}\partial_l)\rangle \bigr)
\nonumber\\
&\approx &
-\langle \nabla_{\partial_i} N,\nabla_{\partial_j}\partial_l  \rangle
+\langle \nabla_{\partial_j} N,\nabla_{\partial_i}\partial_l  \rangle
+ f_\delta\bigl(\langle\mathbf L \partial_j,\nabla_{\partial_i}\partial_l \rangle
-\langle\mathbf L \partial_i,\nabla_{\partial_j}\partial_l \rangle\bigr)\nonumber
\end{eqnarray}
where in the last line we used $\langle \mathbf L\partial_i,N\rangle=\langle\partial_i,\mathbf L N\rangle=0$.
\\

Combining \eqref{abc} and \eqref{gg} we obtain the desired result.
\end{proof}

\end{lemma}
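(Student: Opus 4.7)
The plan is to start from the definition of the Riemannian curvature tensor written in terms of the Levi-Civita connection $\nabla^\delta$ of $g_\delta$. Since $N = \partial_n$ I would expand
\[
\la R^\delta(\partial_i,\partial_j)N,\partial_l\ra_\delta = \la\nabla^\delta_{\partial_j}\nabla^\delta_{\partial_i}N,\partial_l\ra_\delta - \la\nabla^\delta_{\partial_i}\nabla^\delta_{\partial_j}N,\partial_l\ra_\delta
\]
and use the Leibniz rule to rewrite this as a difference $\partial_j\la\nabla^\delta_{\partial_i}N,\partial_l\ra_\delta - \partial_i\la\nabla^\delta_{\partial_j}N,\partial_l\ra_\delta$ plus terms of the form $-\la\nabla^\delta_{\partial_i}N,\nabla^\delta_{\partial_j}\partial_l\ra_\delta + \la\nabla^\delta_{\partial_j}N,\nabla^\delta_{\partial_i}\partial_l\ra_\delta$. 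The strategy is then to treat the two groups separately, in each case pulling everything back to quantities depending only on the $g_0$-geometry plus an explicit $f_\delta$-correction that involves $\mathbf L$ and $\nabla\mathbf L$.

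For the first group I would apply the identity \eqref{approx62} to rewrite $\la\nabla^\delta_{\partial_i}N,\partial_l\ra_\delta$, and then differentiate termwise with respect to $\partial_j$ (and analogously with $i,j$ swapped). The resulting terms fall into three classes: those in which $\mathbf G_\delta$ is never differentiated, which by $\mathbf G_\delta\approx\mathbf I$ reproduce the corresponding $g_0$-expression $\partial_j\la\nabla_{\partial_i}N,\partial_l\ra - \partial_i\la\nabla_{\partial_j}N,\partial_l\ra$; those in which $\mathbf G_\delta$ is differentiated only in a tangential direction $\partial_i$ or $\partial_j$, which vanish approximately by $\nabla_X\mathbf G_\delta\approx 0$ in \eqref{approx61}; and those involving a mixed derivative $\nabla_X\nabla_N\mathbf G_\delta \approx 2f_\delta\nabla_X\mathbf L$, which contribute precisely the wanted $f_\delta(\la\partial_i,(\nabla_{\partial_j}\mathbf L)\partial_l\ra - \la\partial_j,(\nabla_{\partial_i}\mathbf L)\partial_l\ra)$ up to extra terms of the form $f_\delta\la\partial_i,\mathbf L\nabla_{\partial_j}\partial_l\ra$ which I expect to cancel against contributions from the second group.

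For the second group I would exploit that $\la\nabla^\delta_{\partial_i}N,N\ra_\delta=0$ (which is immediate from $g_\delta(N,N)=1$), so one can insert $\mathbf P^T$ into the slot $\nabla^\delta_{\partial_j}\partial_l$ without changing the inner product. Then \eqref{approx65} converts $\nabla^\delta_{\partial_i}N$ into $\nabla_{\partial_i}N + f_\delta\mathbf L\partial_i$, and \eqref{grad approx} converts $\mathbf P^T(\nabla^\delta_{\partial_j}\partial_l)$ into $\mathbf P^T(\nabla_{\partial_j}\partial_l)$. Using once more that $\mathbf L N = 0$ to drop the projection in the $\mathbf L\partial_i$ contribution, this group becomes the $g_0$-analogue plus a correction $f_\delta(\la\mathbf L\partial_j,\nabla_{\partial_i}\partial_l\ra - \la\mathbf L\partial_i,\nabla_{\partial_j}\partial_l\ra)$.

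Finally I would add the two groups together. The pure $g_0$ pieces reassemble via the same Gauss-type identity into $R_{ijnl}$, while the leftover $f_\delta$ corrections of the form $\la\partial_i,\mathbf L\nabla_{\partial_j}\partial_l\ra$ appearing in the first group cancel exactly against those appearing in the second (this is the point where $[\partial_i,\partial_j]=0$ and self-adjointness of $\mathbf L$ matter). What remains is precisely the stated expression. The main obstacle is the bookkeeping in the third class of terms in the first group: one must check that the stray $f_\delta$-terms that do not immediately look like $\nabla\mathbf L$ are exactly those produced by the second group, so that only the clean $f_\delta(\nabla_{\partial_j}\mathbf L - \nabla_{\partial_i}\mathbf L)$ contribution survives.
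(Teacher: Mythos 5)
Your proposal is correct and follows essentially the same route as the paper: the same split of $R^\delta_{ijnl}$ into derivative terms handled via \eqref{approx62} with the three classes of $\mathbf G_\delta$-derivatives, and inner-product terms handled via \eqref{approx65} and \eqref{grad approx} after inserting $\mathbf P^T$, with the stray $f_\delta\la\partial_i,\mathbf L\nabla_{\partial_j}\partial_l\ra$ terms cancelling between the two groups exactly as you anticipate. No gaps.
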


\begin{lemma}
\label{zwei n}
For $j,l\in\{1,\dots,n-1\}$ we have
\begin{eqnarray*}
R^\delta_{njnl}\approx&& R_{njnl}
-2f_\delta'(\mb L\we\mb P^N)_{njnl}
                                          +2f^2_\delta (\mb L^2\we\mb P^N)_{njnl}
                                          + 2C f_\delta(\mb P^T\we\mb P^N)_{njnl}\\
                                          &&-f_\delta\bigl( \langle\mathbf L \partial_j,\nabla_{\partial_l}N\rangle 
                                          + \langle\nabla_{\partial_j}N,\mathbf L \partial_l\rangle \bigr)
\end{eqnarray*}

\begin{proof}
We proceed as in \cite{k}, Lemma 7.2. Using Lemma \ref{approx} we compute
\begin{eqnarray*}
R^\delta_{njnl}&=&\langle R^\delta(N,\partial_j)N,\partial_l\rangle_\delta\\
&=&\langle\nabla_{\partial_j}^\delta \underbrace{\nabla_N^\delta N}_{=0},\partial_l \rangle_\delta
  -\langle\nabla_N^\delta \nabla_{\partial_j}^\delta N,\partial_l \rangle_\delta\\
&=&-N\langle \nabla_{\partial_j}^\delta N,\partial_l \rangle_\delta + \langle \nabla_{\partial_j}^\delta N,\nabla^\delta_N\partial_l \rangle_\delta\\
&\stackrel{\eqref{approx62}, \eqref{approx65}}{\approx} &
-N\Bigl[
\frac 12\bigl(\langle\nabla_N\partial_j,\mathbf G_\delta\partial_l\rangle + \langle\partial_j,\mathbf G_\delta(\nabla_N\partial_l)\rangle
+\langle\partial_j,(\nabla_N \mathbf G_\delta)\partial_l\rangle\bigr)
\Bigr]\\
&&+\langle\nabla_N\partial_j+f_\delta\mathbf L\partial_j ,\nabla_N\partial_l+f_\delta\mathbf L\partial_l \rangle\\
&\stackrel{\eqref{approx61}}{\approx}&\langle R(N,\partial_j)N,\partial_l\rangle-f_\delta'\langle\partial_j,\mathbf L \partial_l\rangle
                                          +f^2_\delta\langle\mathbf L \partial_j,\mathbf L \partial_l\rangle 
                                          + C f_\delta\langle\partial_j,\partial_l\rangle\\
                                        &&-f_\delta\bigl( \langle\mathbf L \partial_j,\nabla_{\partial_l}N\rangle 
                                          + \langle\nabla_{\partial_j}N,\mathbf L \partial_l\rangle \bigr)\\
\end{eqnarray*}

\end{proof}

\end{lemma}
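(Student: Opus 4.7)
The plan is to begin from $R^\delta_{njnl} = \langle R^\delta(N,\partial_j)N,\partial_l\rangle_\delta$ and to peel off layers using Lemma \ref{approx}. Since $[N,\partial_j]=0$ in the Fermi coordinate frame and $\nabla^\delta_N N = 0$ by \eqref{approx64}, the definition of the curvature tensor immediately reduces to
\[ R^\delta_{njnl} \;=\; -\langle \nabla^\delta_N\nabla^\delta_{\partial_j}N,\partial_l\rangle_\delta \;=\; -N\langle\nabla^\delta_{\partial_j}N,\partial_l\rangle_\delta + \langle\nabla^\delta_{\partial_j}N,\nabla^\delta_N\partial_l\rangle_\delta, \]
and I would evaluate these two pieces separately.

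The second piece I handle by substituting $\nabla^\delta_{\partial_j}N\approx \nabla_{\partial_j}N+f_\delta\mathbf{L}\partial_j$ and $\nabla^\delta_N\partial_l = \nabla^\delta_{\partial_l}N\approx \nabla_{\partial_l}N+f_\delta\mathbf{L}\partial_l$ from \eqref{approx65}, and expanding the $\langle\cdot,\cdot\rangle_\delta$ back to $\langle\cdot,\cdot\rangle$ via $\mathbf{G}_\delta\approx\mathbf{I}$; this produces $\langle\nabla_{\partial_j}N,\nabla_{\partial_l}N\rangle$, a pair of cross terms $f_\delta(\langle\mathbf{L}\partial_j,\nabla_{\partial_l}N\rangle+\langle\nabla_{\partial_j}N,\mathbf{L}\partial_l\rangle)$, and $f_\delta^2\langle\mathbf{L}\partial_j,\mathbf{L}\partial_l\rangle$. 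The first piece is where the main work lies: I expand $\langle\nabla^\delta_{\partial_j}N,\partial_l\rangle_\delta$ via \eqref{approx62} as the half-sum of three summands in $\mathbf{G}_\delta$ and $\nabla_N\mathbf{G}_\delta$, and differentiate each by $N$ with the Leibniz rule. The essential inputs from \eqref{approx61} are $\nabla_N\mathbf{G}_\delta\approx 2f_\delta\mathbf{L}$ and $\nabla_N\nabla_N\mathbf{G}_\delta\approx 2f_\delta'\mathbf{L}-2Cf_\delta\mathbf{P}^T$: the second-derivative contribution supplies directly the $-f_\delta'\langle\partial_j,\mathbf{L}\partial_l\rangle$ and $+Cf_\delta\langle\partial_j,\partial_l\rangle$ terms, while the first-derivative contributions (each appearing with coefficient $2f_\delta$ and the wrong sign) combine with the $+f_\delta$-cross terms from the second piece to leave exactly the net $-f_\delta(\langle\mathbf{L}\partial_j,\nabla_{\partial_l}N\rangle+\langle\nabla_{\partial_j}N,\mathbf{L}\partial_l\rangle)$ in the stated formula.

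To conclude, the $f_\delta$-free terms reassemble into $R_{njnl}$: using $[N,\partial_j]=0$ and $\nabla_N N=0$ one has $\nabla_N\nabla_N\partial_j = \nabla_N\nabla_{\partial_j}N = -R(N,\partial_j)N$, and a parallel identity in the $\partial_l$-slot holds thanks to $R_{ijkl}=R_{klij}$. The identification with the Kulkarni-Nomizu expressions is then direct: because $\mathbf{L}N=0$ and $(P^N)_{ik}=\delta_{in}\delta_{kn}$, on the indices $(n,j,n,l)$ with $j,l<n$ the defining formula collapses to $(\mathbf{L}\wedge\mathbf{P}^N)_{njnl}=\tfrac{1}{2}L_{jl}$, $(\mathbf{L}^2\wedge\mathbf{P}^N)_{njnl}=\tfrac{1}{2}(L^2)_{jl}$, and $(\mathbf{P}^T\wedge\mathbf{P}^N)_{njnl}=\tfrac{1}{2}g_{jl}$, matching the coefficients $-2f_\delta'$, $2f_\delta^2$, $2Cf_\delta$. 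The delicate point I expect is precisely the bookkeeping of the Leibniz expansion in the first piece: that expansion produces several summands involving $\nabla_N\mathbf{G}_\delta$, and one must verify that after collection the $f_\delta$-cross terms combine with those of the second piece to leave exactly one copy of the stated correction, rather than cancelling entirely or being double-counted.
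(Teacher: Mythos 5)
Your proposal is correct and follows essentially the same route as the paper: reduce via $\nabla^\delta_NN=0$ to $-N\langle\nabla^\delta_{\partial_j}N,\partial_l\rangle_\delta+\langle\nabla^\delta_{\partial_j}N,\nabla^\delta_N\partial_l\rangle_\delta$, expand the first term with \eqref{approx62} and the second with \eqref{approx65}, and then use \eqref{approx61} so that the $\nabla_N\nabla_N\mathbf G_\delta$ term yields $-f_\delta'L_{jl}+Cf_\delta g_{jl}$ while the $-2f_\delta$ cross terms combine with the $+f_\delta$ cross terms to leave the stated $-f_\delta$ correction. Your bookkeeping of the Leibniz expansion and the identification of the Kulkarni--Nomizu coefficients on the indices $(n,j,n,l)$ are exactly what the paper's condensed display carries out.
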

\noindent
\textbf{Proof of Theorem \ref{operatorzerlegung}}.
Let us define the $(0,4)$-tensor $B$ by
\begin{eqnarray*}
B_{ijkl}&=&-2(\mathbf L\we \nabla N)_{ijkl}\\
             &&+\la\partial_i, (\nabla_{\partial_j}\mathbf L)\partial_l\ra \la \partial_k,N\ra  
             -\la\partial_j, (\nabla_{\partial_i}\mathbf L)\partial_l\ra \la \partial_k,N\ra\\
            && - \la \partial_i, (\nabla_{\partial_j}\mathbf L)\partial_k\ra \la \partial_l,N\ra  
             +\la\partial_j, (\nabla_{\partial_i}\mathbf L)\partial_k\ra \la \partial_l,N\ra\\
           &&+ \la\partial_k, (\nabla_{\partial_l}\mathbf L)\partial_j\ra \la \partial_i,N\ra  
             -\la \partial_l, (\nabla_{\partial_k}\mathbf L)\partial_j\ra \la \partial_i,N\ra\\
           &&- \la\partial_k, (\nabla_{\partial_l}\mathbf L)\partial_i\ra \la \partial_j,N\ra  
             +\la \partial_l, (\nabla_{\partial_k}\mathbf L)\partial_i\ra \la \partial_j,N\ra\\
\end{eqnarray*}
Clearly the tensor  $B$ satisfies $B_{ijkl}=-B_{jikl}=-B_{ijlk}$ and $B_{ijkl}=B_{klij}$, thus inducing 
 a symmetric bilinearform $\mc B$ on $\Lambda^2(TM)$, $\mc B(e_i\we e_j,e_k\we e_l)=B_{ijkl}$ (cf. the discussion in the beginning of this section). 
 In view of Lemmas \ref{kein n} - \ref{zwei n} the the claim 
 \begin{eqnarray}\label{Rdeltaproof}
\mathcal R_\delta &\approx& \mathcal R-f_\delta^2\mb L\we\mb L + f_\delta \mc B -2f_\delta'\mb L\we\mb P^N +2f_\delta^2\mb L^2\we\mb P^N + 2Cf_\delta\mb P^T\we\mb P^N
\end{eqnarray}
follows: Note that since the operators on the right hand side (i.e. their corresponding $(0,4)$-tensors) have the same symmetries as 
the curvature operator, it suffices to consider the following three cases:
\\
\\
Case 1)\\
Let $i,j,k,l\leq n-1$. Obviously in this case  \[(\mb L\we\mb P^N)_{ijkl}=(\mb L^2\we\mb P^N)_{ijkl}= ( \mb P^T\we\mb P^N)_{ijkl}=0\] and 
$B_{ijkl}=-2(\mathbf L\we \nabla N)_{ijkl}$. Thus \eqref{Rdeltaproof} follows by Lemma \ref{kein n}.
\\
\\
Case 2)\\
Let $i,j,l\leq n-1$ and $k=n$. Recall that $L_{in}=0$ for all $i$ and $(P^N)_{in}=0$ for $i\leq n-1$. Therefore we have 
\[ (\mb L\we\mb L)_{ijnl}=(\mb L\we\mb P^N)_{ijnl}=(\mb L^2\we\mb P^N)_{ijnl}= ( \mb P^T\we\mb P^N)_{ijnl}=0\]
Moreover, $(\nabla N)_{in}=\langle\partial_i,\nabla_NN\rangle=0=\langle N,\nabla_{\partial_i}N\rangle =(\nabla N)_{ni}$
and therefore
 \[B_{ijnl}=\la\partial_i, (\nabla_{\partial_j}\mathbf L)\partial_l\ra  
                         -\la\partial_j, (\nabla_{\partial_i}\mathbf L)\partial_l\ra\]
and  \eqref{Rdeltaproof} follows by Lemma \ref{ein n}.     
\\
\\
Case 3)\\
Let $j,l\leq n-1$ and $i=k=n$. Cearly $(\mb L\we\mb L)_{njnl}=0$. As in case 2) we have  $(\mathbf L\we \nabla N)_{njnl}=0$ and thus
\begin{eqnarray*}
B_{njnl}&=&\la N, (\nabla_{\partial_j}\mathbf L)\partial_l\ra  
             -\la\partial_j, (\nabla_N\mathbf L)\partial_l\ra 
           + \la N, (\nabla_{\partial_l}\mathbf L)\partial_j\ra  
             -\la \partial_l, (\nabla_N\mathbf L)\partial_j\ra      \\
             &=&    \la N, (\nabla_{\partial_j}\mathbf L)\partial_l\ra  
           + \la N, (\nabla_{\partial_l}\mathbf L)\partial_j\ra  
\end{eqnarray*}            
where we used that $\nabla_N\mb L=0$ (cf. Lemma \ref{defnL}). Using the fact that $\mb L$ is self-adjoint and $\mb L N=0$ we compute
\begin{eqnarray*}
\la N, (\nabla_{\partial_j}\mathbf L)\partial_l\ra& =&
\la N, \nabla_{\partial_j}(\mathbf L\partial_l)\ra - \la N, \mb L(\nabla_{\partial_j}\partial_l)\ra\\
&=&\partial_j\la N,\mb L\partial_l\ra-\la\nabla_{\partial_j}N,\mb L\partial_l\ra  \\
&=&-\la\nabla_{\partial_j}N,\mb L\partial_l\ra  
\end{eqnarray*}
which gives us
\[B_{njnl}=-(\la\nabla_{\partial_j}N,\mb L\partial_l\ra) +\la\nabla_{\partial_l}N,\mb L\partial_j\ra) \]
\eqref{Rdeltaproof} follows by Lemma \ref{zwei n} and we are done.

\section{The Riemannian curvature operator of $g^{1,}{}'$}
Recall that $g^{1,}{}'$ is the extention of $g^1$ on a small neighborhood of $\Gamma$ in $M_0$, as introduced in Lemma \ref{strichmetrik}.
In this section we compare the Riemannian curvature 
operators on $\Gamma$ with respect to the metrics $g$ and $g^{1,}{}'$ (cf. \cite{k}, \S~9).
\\

We define the selfadjoint operator $\mathbf G_1$ on $TM_0$ by $\langle\cdot,\mathbf G_1\cdot\rangle=\langle\cdot,\cdot\rangle'_1$.

\begin{prop}\label{Rstrich}
Let $\mathcal R_1'$ be the Riemannian curvature operator with respect to $g^{1,}{}'$.
On $\Gamma$ we have
\begin{eqnarray*}
\mc R_1' = \mc R - \mc A + \mc B
+ 2\mc L^2 - \nabla_N^2\mc G_1                                           
\end{eqnarray*}
where $\mc A$, $\mc B$ and $\mc L^2$ are as in Theorem \ref{operatorzerlegung} and $\nabla^2_N\mc G_1:=(\nabla^2_N\mathbf G_1)\we\mathbf P^N$.
\end{prop}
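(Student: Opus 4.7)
The plan is to repeat the curvature computations of Section 3 with $g_\delta$ replaced by the smooth metric $g^{1,}{}'$, restricting attention to $\Gamma$ so that all ``$\approx$''-relations become honest equalities. The starting point is a set of exact counterparts of Lemma \ref{approx} for the operator $\mathbf G_1$. By Lemma \ref{strichmetrik} the metrics $g$ and $g^{1,}{}'$ share the same equidistant hypersurfaces and common unit normal $N$, so on $\Gamma$ one has $\mathbf G_1=\mathbf I$, $\nabla_X\mathbf G_1=0$ for $X\in T\Gamma$, and $\nabla^{1,}{}'_NN=0$. The first nontrivial identity is $\nabla_N\mathbf G_1|_\Gamma=2\mathbf L$, which follows from $\partial_ng^{1,}{}'_{ij}|_\Gamma=2L^1_{ij}$ (a Gauss-lemma computation as in Remark \ref{gprops}$(ii)$) together with $\partial_ng^0_{ij}|_\Gamma=-2L^0_{ij}$ and $L=L^0+L^1$. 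Tangential differentiation along $\Gamma$ then gives $\nabla_X\nabla_N\mathbf G_1|_\Gamma=2\nabla_X\mathbf L$. The second normal derivative $\nabla_N^2\mathbf G_1|_\Gamma$ admits no simpler form, and is precisely the quantity appearing in the statement.

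With these identities in hand, I would mimic the case analysis of Lemmas \ref{kein n}--\ref{zwei n} line by line, with $\mathbf G_\delta$ replaced by $\mathbf G_1$, $f_\delta$ replaced by the constant $1$, and the combination $2f'_\delta\mathbf L-2Cf_\delta\mathbf P^T$ replaced by the single quantity $\nabla_N^2\mathbf G_1|_\Gamma$. In \emph{Case 1} ($i,j,k,l\leq n-1$) the Gauss equation for $\Gamma\subset(M_0,g^{1,}{}')$ applies; using $\langle\nabla^{1,}{}'_{\partial_i}N,\partial_j\rangle_{1,}{}'=L^1_{ij}=L_{ij}+(\nabla N)_{ij}$ on $\Gamma$ and subtracting the Gauss equation for $(M_0,g)$ to eliminate the intrinsic term (which agrees in the two metrics since $g^{1,}{}'|_\Gamma=g|_\Gamma$), the computation yields $R^{1,}{}'_{ijkl}=R_{ijkl}-(\mathbf L\we\mathbf L)_{ijkl}-2(\mathbf L\we\nabla N)_{ijkl}$. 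In \emph{Case 2} ($i,j,l\leq n-1$, $k=n$) the three-part decomposition a)--c) of the proof of Lemma \ref{ein n} goes through unchanged and produces the contribution to $\mathcal B$ appropriate to this index range. In \emph{Case 3} ($i=k=n$, $j,l\leq n-1$) the computation of Lemma \ref{zwei n} yields the $2\mathcal L^2$ and $-\nabla_N^2\mathcal G_1$ contributions.

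Finally, assembling the three cases as in the proof of Theorem \ref{operatorzerlegung} — and using that $\mathcal A$, $\mathcal B$, $\mathcal L^2$ and $\nabla_N^2\mathcal G_1=(\nabla_N^2\mathbf G_1)\we\mathbf P^N$ share the symmetries of $\mathcal R$, so that these three index configurations are representative — produces the claimed equality
\[\mc R_1' = \mc R - \mc A + \mc B + 2\mc L^2 - \nabla_N^2\mc G_1\]
on $\Gamma$. The main obstacle I expect is purely bookkeeping: verifying that the package $-2f'_\delta\mathcal L+2Cf_\delta\hat{\mathcal I}$ from Theorem \ref{operatorzerlegung} genuinely collapses into the single term $-\nabla_N^2\mathcal G_1$ here, and checking that no extra contributions sneak in from the fact that $g^{1,}{}'$ is not literally of the form \eqref{defG}.
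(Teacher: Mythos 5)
Your proposal is correct and follows essentially the same route as the paper: one verifies the exact analogues of the identities \eqref{approx61} for $\mathbf G_1$ on $\Gamma$ (namely $\mathbf G_1=\mathbf I$, $\nabla_X\mathbf G_1=0$, $\nabla_N\mathbf G_1=2\mathbf L$, $\nabla_X\nabla_N\mathbf G_1=2\nabla_X\mathbf L$) and then repeats the computations of Section 3 with $f_\delta$ replaced by $1$, the only difference being that the $\nabla_N\nabla_N$-term stays as $\nabla_N^2\mathbf G_1$ instead of $2f'_\delta\mathbf L-2Cf_\delta\mathbf P^T$, which is exactly the ``collapse'' you flag at the end and which is automatic from Lemma \ref{zwei n}.
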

\noindent
In particular, since $\mc R_1'=\mc R_1$ holds on $\Gamma$ independently of the extension $g_1'$, and $\mc R_1\geq \kappa$ by assumption, we have
\begin{eqnarray}\label{onGamma}
\mc R - \mc A + \mc B+ 2\mc L^2 - \nabla_N^2\mc G_1\geq\kappa\mc I
\end{eqnarray}
on $\Gamma$, which is an estimate we will use in the next section.

\begin{proof}
As in \cite{k}, Lemma 9.1, we have to check
the approximate identities for $\mathbf G_1$ which correspond with the ones in \eqref{approx61}. 
For the convenience of the reader we repeat the computations from \cite{k}.
Let $X,Y\in\{\partial_1,\dots,\partial_{n-1}\}$.
On $\Gamma$ we have
\[\mathbf G_1= \mathbf I\]
which implies
\[\quad \nabla_X\mathbf G_1=0\]
on $\Gamma$. Moreover, we have $\nabla_N\mathbf G_1= 2\mathbf L$ on $\Gamma$. Indeed
\begin{eqnarray*}
\langle X,(\nabla_N \mathbf G_1)Y\rangle &=& \langle X,\nabla_N( \mathbf G_1Y)\rangle - \langle X, \mathbf G_1(\nabla_NY)\rangle\\
&\stackrel{\textnormal{on }\Gamma}{=}&N\langle X,\mathbf G_1 Y\rangle - \langle\nabla_N X,Y\rangle - \langle X, \nabla_NY\rangle\\
&=& N\langle X, Y\rangle'_1 - \langle\nabla_N X,Y\rangle - \langle X, \nabla_NY\rangle\\
&=&\langle \nabla^{1,}{}'_N X,Y\rangle'_1+\langle X,\nabla^{1,}{}'_N Y\rangle'_1- \langle\nabla_N X,Y\rangle - \langle X, \nabla_NY\rangle\\
&=& 2L_0(X,Y)-2L_1(X,Y) \\
&=& \langle X, 2\mathbf LY\rangle
\end{eqnarray*}
where we used that in our coordinates the second fundamental forms of $\Gamma$ with respect to $N$ are $-\langle X,\nabla_N Y\rangle$ and $\langle X,\nabla^{1,}{}'_N Y\rangle^{1,}{}'$
(cf. Remark \ref{gprops}). Furthermore, recall that by construction   we have 
$\mathbf G_1N=N$ (cf. Lemma \ref{strichmetrik})  and therefore
\[(\nabla_N\mathbf G_1)N=\nabla_N(\mathbf G_1N)-\mathbf G_1(\nabla_N N)=0= \mathbf L N\]
Finally, $\nabla_N\mathbf G_1=2\mathbf L$ implies $\nabla_X\nabla_N\mathbf G_1=2\nabla_X\mathbf L$ on $\Gamma$.

We can now repeat the computations from the previous section, where the only difference occurs due to the $\nabla_N\nabla_N\mathbf G_1$ term.
\end{proof}

\section{Estimating $\mathcal R_\delta$ on $M_0$}\label{riem est section}
The goal of this section is to show that $\mc R_\delta\geq \bigl(\kappa-\varepsilon(\delta)\bigr)\mc I_\delta$
holds on $M_0$.
\begin{lemma}\label{lemma92}
We have 
\begin{eqnarray}\label{92}
\mc R - f_\delta^2\mc A + f_\delta \mc B
&\geq& (\kappa\mc-\varepsilon(\delta))\mc I+2f_\delta\bigl(-\mc L^2
                           +\frac 12 \nabla^2_N \mc G_1\bigr)                                            
\end{eqnarray}
where $\varepsilon(\delta)$  tends to zero as $\delta\rightarrow 0$. 
\begin{proof}
Since $\Gamma$ is compact it suffices to show that 
\[
\mc R(\alpha,\alpha) - f_\delta^2\mc A(\alpha,\alpha) + f_\delta \mc B(\alpha,\alpha)
\geq \kappa \mc I(\alpha,\alpha) +2f_\delta\bigl(-\mc L^2
                           +\frac 12 \nabla^2_N \mc G_1\bigr)(\alpha,\alpha)-\varepsilon(\delta) \mc I(\alpha,\alpha)
\]
holds on a small neighborhood $U$ of a point $p\in\Gamma$ for every 2-form $\alpha$ on $U$ where $\varepsilon(\delta)$ does not depend on $\alpha$. 
Let us fix a coordinate neighborhood $(U,\varphi)$ of $p$ as in Section 2. 
W.l.o.g. we assume that $\alpha$ has fixed 
coefficients satisfying $\sum_{i,j=1}^n(\alpha^{ij})^2=1$. 

We proceed as in Lemma 9.2 of \cite{k}. Off a $\delta$-neighborhood of  $\Gamma$, i.e. $f_\delta(x^n)=0$ the inequality holds
without an error term. For $f_\delta(x^n)=1$, i.e. on $\Gamma$ the inequality follows from 
 \eqref{onGamma} and the assumption $\mc R_1\geq \kappa$.
\\
\\
Let us now fix a point $\hat x=(x^1,\dots,x^{n-1})\in U\cap \Gamma$ and look at the inequality on  the line segment $\{(\hat x, x^n): x^n\in[0,\delta]\}$. Let 
\[
\mc Q=-\mc L^2+\frac 12 \nabla^2_N \mc G_1
\]
                           
For $x^n\in [0,\delta^4]$ we have $f_\delta(x^n)\in [0,1]$ (cf. the definition of $f_\delta$). 
If the quantities $\mc R(\alpha,\alpha)$, $\mc A(\alpha,\alpha)$, $\mc B(\alpha,\alpha)$ and $\mc Q(\alpha,\alpha)$ would not depend $x^n$, the inequality
\begin{eqnarray}\label{ineq}
\mathcal R(\alpha,\alpha) - f_\delta^2 \mc A(\alpha,\alpha) + f_\delta \mc B(\alpha,\alpha)
\geq \kappa\mc I(\alpha,\alpha)+4f_\delta \mc Q(\alpha,\alpha)
\end{eqnarray}
would hold 
without an error term since it holds for $f_\delta =0$ and $f_\delta=1$ and the function
\begin{eqnarray*}
[0,1]&\rightarrow &\mathbb R\\
y &\mapsto&  \mathcal R(\alpha,\alpha) - y^2 \mc A(\alpha,\alpha) + y\mc B(\alpha,\alpha)
\end{eqnarray*}
is concave (note that $\mb L\geq 0$ implies $\mc A=\mathbf L\we \mathbf L\geq 0$ by Lemma \ref{produkt geq 0}). However $\mathcal R(\alpha,\alpha)$, $\mc A(\alpha,\alpha)$,
 $\mc B(\alpha, \alpha)$ and $\mc Q(\alpha,\alpha)$ do
depend on $x^n$, but they are smooth on $M_0$ and hence almost constant for small $x^n$.
Indeed, one has for instance
\begin{eqnarray*}
|\mc R(\alpha,\alpha)(\hat x,s)-\mc R(\alpha,\alpha)(\hat x,t)|
&=&\frac 14|R_{ijkl}(\hat x,s)-R_{ijkl}(\hat x,t)||\alpha^{ij}\alpha^{kl}|\\
&\leq& 
\delta c(n)\sup_{i,j,k,l}\|R_{ijkl}\|_{C^1(U)}\\
\end{eqnarray*}
for all $s,t\in[0,\delta]$, which tends to zero 
since the $C^1$-norm of the coordinate functions is bounded if we choose $U$ small enough. Therefore \eqref {ineq} holds up to a small error term
$\ve(\delta)$ on the right hand side for $x^n\in[0,\delta^4]$.
\\
\\
For $x^n\in [\delta^4,\delta]$ we have $f_\delta(x^n)\in[-\delta^2,0]$. $\mc A,\mc B,\mc I$ and $\mc Q$ are uniformly bounded
near $\Gamma$, therefore  \eqref{ineq} holds for all $x^n\in[0,\delta]$ if we choose $\delta$ sufficiently small and subtract
another $\varepsilon(\delta)$ on the right hand side.

\end {proof}
\end{lemma}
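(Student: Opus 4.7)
The plan is to reduce the inequality to a pointwise scalar estimate on unit bivectors near $\Gamma$, then exploit concavity in a single scalar variable to interpolate between two cases where the inequality is already known. By compactness of $\Gamma$ it suffices, for each $p\in\Gamma$, to establish the claim on a small Fermi coordinate neighborhood $(U,\varphi)$ of $p$ with an error $\varepsilon(\delta)$ uniform over $U$. After fixing such $U$, I would normalize the 2-form by requiring its coordinate coefficients to satisfy $\sum_{i,j}(\alpha^{ij})^2=1$, so that $\mc I(\alpha,\alpha)$ is bounded above and below by constants independent of $\alpha$, and reduce to verifying a scalar inequality at every point of $U$.

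The two extreme cases are handled for free. Where $f_\delta(x^n)=0$ (i.e.\ for $x^n\geq \delta$) the inequality collapses to $\mc R\geq\kappa\mc I$, which is the assumption on $g_0$. Where $f_\delta(x^n)=1$ (i.e.\ on $\Gamma$ itself) it reduces to
\[
\mc R-\mc A+\mc B+2\mc L^2-\nabla_N^2\mc G_1 \;\geq\; \kappa\mc I,
\]
which is precisely the estimate \eqref{onGamma} obtained from $\mc R_1\geq \kappa$ via Proposition \ref{Rstrich}. The interior of the range $f_\delta(x^n)\in(0,1)$ (i.e.\ $x^n\in(0,\delta^4)$) is where the real work lies. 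Here I would consider the scalar function
\[
\phi(y) := \mc R(\alpha,\alpha) - y^2\,\mc A(\alpha,\alpha) + y\,\mc B(\alpha,\alpha) - \kappa\,\mc I(\alpha,\alpha) - 2y\,\bigl(-\mc L^2+\tfrac12\nabla_N^2\mc G_1\bigr)(\alpha,\alpha)
\]
at a fixed point $\hat x\in\Gamma$. The crucial observation is that $\phi$ is \emph{concave} in $y$: the coefficient of $y^2$ is $-\mc A(\alpha,\alpha)\leq 0$, because $\mathbf L\geq 0$ together with Lemma \ref{produkt geq 0} gives $\mc A=\mathbf L\we\mathbf L\geq 0$. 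Thus $\phi(y)\geq 0$ on $[0,1]$ as soon as $\phi(0)\geq 0$ and $\phi(1)\geq 0$, which are exactly the two cases already handled.

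Finally I would account for the fact that $\mc R, \mc A, \mc B$ and $\mc Q:=-\mc L^2+\tfrac12\nabla_N^2\mc G_1$ actually depend on $x^n$, not only on $\hat x$. Since these tensors are smooth on $M_0$, their oscillation over the strip $x^n\in[0,\delta^4]$ is bounded by $\delta^4\sup_U\|\cdot\|_{C^1}$ times $|\alpha^{ij}\alpha^{kl}|$, and the normalization on $\alpha$ makes this contribution a uniform $\varepsilon(\delta)$. For the remaining range $x^n\in[\delta^4,\delta]$ one has $|f_\delta|\leq \delta^2$, so the terms $f_\delta^2\mc A$, $f_\delta\mc B$ and $f_\delta\mc Q$ are $O(\delta^2)$ by the uniform boundedness of $\mc A,\mc B,\mc Q,\mathbf L,\mathbf P^N$ near $\Gamma$, and the inequality follows from $\mc R\geq\kappa\mc I$ plus another $O(\delta^2)$ absorbed into $\varepsilon(\delta)$. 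The main obstacle, and the place where the sign hypothesis on $L$ enters in an essential way, is the concavity of $\phi$; without $\mathbf L\geq 0$ the quadratic could curve the wrong way and the interpolation between the two endpoints would fail.
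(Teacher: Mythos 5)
Your proposal is correct and follows essentially the same route as the paper: reduce by compactness and the normalization $\sum_{i,j}(\alpha^{ij})^2=1$ to a pointwise scalar inequality, settle the endpoints $f_\delta=0$ (assumption $\mc R\geq\kappa\mc I$) and $f_\delta=1$ (estimate \eqref{onGamma}), interpolate via concavity of $y\mapsto \mc R(\alpha,\alpha)-y^2\mc A(\alpha,\alpha)+y\mc B(\alpha,\alpha)$ using $\mc A=\mathbf L\we\mathbf L\geq 0$, and absorb the $x^n$-dependence on $[0,\delta^4]$ and the $|f_\delta|\leq\delta^2$ regime on $[\delta^4,\delta]$ into $\varepsilon(\delta)$. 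This matches the paper's argument step for step.
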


\begin{prop}[cf. \cite{k}, Lemma 10.1]\label{finest}
If the constant $C$ in the definition of $g_\delta$ is chosen large enough, then for small $\delta>0$ 
\[\mathcal R_\delta\geq \bigl(\kappa -\varepsilon(\delta)\bigr)\mathcal I_\delta\]
where $\varepsilon(\delta)\rightarrow 0$ as $\delta$ tends to zero.

\begin{proof}
Since $g_\delta\rightarrow g $ in the $C^0$-sense, it suffices to show 
\[\mathcal R_\delta\geq \bigl(\kappa -\varepsilon(\delta)\bigr)\mathcal I\]
From Proposition \ref{operatorzerlegung} and Lemma \ref{lemma92} we get 
\begin{eqnarray*}
\mc R_\delta
&\approx& 
\mc R - f^2_\delta \mc A + f_\delta\mc B
 - 2f'_\delta\mc L + 2f^2_\delta \mc L^2 + 2Cf_\delta \hat{\mc I}\\
&\geq& \kappa \mc I
+2f_\delta \bigl( -\mc L^2+\frac 12\nabla_N^2\mc G_1+ C\hat{\mc I}\bigr)                                            
-2f'_\delta\mc L + 2f^2_\delta \mc L^2-\varepsilon(\delta)\mc I
\end{eqnarray*}
By definition we have 
\begin{eqnarray}\label{LG} 
-\mc L^2+\frac 12\nabla_N^2\mc G_1+ C\hat{\mc I}=(-\mathbf L^2+\frac 12\nabla_N^2\mathbf G_1+C\mathbf P^T)\we \mathbf P^N
\end{eqnarray}
Note that the operators $\mathbf L^2$ and $\nabla_N^2\mathbf G_1$ vanish on $T\Gamma(d)^\perp$. Therefore  \eqref{LG} becomes 
nonnegative near $\Gamma$ for a large enough $C$ in view of Lemma \ref{produkt geq 0} (recall that $\mathbf P^N$ is nonnegative). 
Moreover, $-\mc L^2+\frac 12\nabla_N^2\mc G_1+ C\hat{\mc I}$
is uniformly bounded near $\Gamma$, and $f_\delta\geq-\delta^2$ by definition. Therefore
\[2f_\delta \bigl( -\mc L^2+\frac 12\nabla_N^2\mc G_1+ C\hat{\mc I}\bigr) \geq -\varepsilon(\delta)\mc I\]
$f_\delta '$ is negative on $[0,\delta^4)$, does not exceed $\varepsilon(\delta)$ on $[\delta^4,\delta]$ and vanishes else. 
$\mc L=\mathbf L\we \mathbf P^N$ is nonnegative 
 and uniformly bounded near $\Gamma$, which gives us
$-4f'_\delta\mc L\geq -\varepsilon(\delta)\mc I$.
Obviously $f_\delta^2 \mc L^2=f_\delta^2\mathbf L^2\we \mathbf P^N$ is nonnegative, and we are done.
\end{proof}
\end{prop}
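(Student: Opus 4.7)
The plan is to assemble the required inequality by substituting the estimate from Lemma \ref{lemma92} into the decomposition of $\mathcal R_\delta$ supplied by Theorem \ref{operatorzerlegung}, and then to show term-by-term that the remaining contributions are either nonnegative or of order $\varepsilon(\delta)$. Since $g_\delta\to g$ uniformly and $\mathbf G_\delta\to\mathbf I$ by Remark \ref{gprops}(i), the difference between $\mathcal I$ and $\mathcal I_\delta$ is absorbed into $\varepsilon(\delta)$, so it suffices to prove $\mathcal R_\delta\geq (\kappa-\varepsilon(\delta))\mathcal I$.

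First I would combine Theorem \ref{operatorzerlegung} with Lemma \ref{lemma92} to obtain, up to an $\varepsilon(\delta)\mathcal I$ error,
\[
\mathcal R_\delta \;\geq\; \kappa\,\mathcal I + 2 f_\delta\bigl(-\mathcal L^2 + \tfrac 12\nabla_N^2\mathcal G_1 + C\hat{\mathcal I}\bigr) - 2 f_\delta'\,\mathcal L + 2 f_\delta^2\,\mathcal L^2 - \varepsilon(\delta)\mathcal I.
\]
Then I would analyze each of the three remaining pieces separately. For the $f_\delta$-piece, the point is that all three summands inside the bracket have the Kulkarni-Nomizu form $(\cdot)\wedge\mathbf P^N$, so the whole bracket equals $\bigl(-\mathbf L^2+\tfrac12\nabla_N^2\mathbf G_1+C\mathbf P^T\bigr)\wedge\mathbf P^N$. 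Because $\mathbf L^2$ and $\nabla_N^2\mathbf G_1$ vanish on the normal direction $N$ (the former since $\mathbf L N=0$, the latter since $\mathbf G_1 N=N$), while $\mathbf P^T$ is the identity on $T\Gamma(d)$, choosing $C$ larger than a bound for the operator norms of $\mathbf L^2$ and $\tfrac12\nabla_N^2\mathbf G_1$ near $\Gamma$ makes $-\mathbf L^2+\tfrac12\nabla_N^2\mathbf G_1+C\mathbf P^T$ nonnegative on a neighborhood of $\Gamma$. Together with $\mathbf P^N\geq 0$, Lemma \ref{produkt geq 0} then gives that the bracket itself is nonnegative there. When $f_\delta\geq 0$ this term is nonnegative; when $f_\delta<0$ we have $|f_\delta|\leq\delta^2$ and the bracket is uniformly bounded, so the whole piece is $\geq -\varepsilon(\delta)\mathcal I$.

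For the $-2f_\delta'\mathcal L$-piece I would use $\mathcal L=\mathbf L\wedge\mathbf P^N\geq 0$ (again Lemma \ref{produkt geq 0}): on $[0,\delta^4]$ we have $f_\delta'\leq 0$ so the term is $\geq 0$, on $[\delta^4,\delta]$ we have $|f_\delta'|\leq\delta$ and $\mathcal L$ is bounded so it is $\geq -\varepsilon(\delta)\mathcal I$, and outside $[0,\delta]$ it vanishes. The $2f_\delta^2\mathcal L^2$-piece is manifestly nonnegative because $\mathbf L^2\geq 0$ and so $\mathcal L^2=\mathbf L^2\wedge\mathbf P^N\geq 0$ by Lemma \ref{produkt geq 0}. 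Collecting the three estimates absorbs everything into the $-\varepsilon(\delta)\mathcal I$ term.

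The main obstacle is the mixed-sign behaviour of $f_\delta$: on the layer $[\delta^4,\delta]$ it is negative, so the bracket term inside $2f_\delta(\cdot)$ cannot simply be used via its positivity. The key design feature of $f_\delta$ (Definition \ref{fdeltadef}) that saves the argument is precisely that $|f_\delta|\leq\delta^2$ wherever $f_\delta$ is negative, so negativity is compensated by smallness. The second subtle point is the choice of $C$: it must be tuned to the $C^0$-bounds of $\mathbf L$ and $\nabla_N^2\mathbf G_1$ on a fixed neighborhood of $\Gamma$, which is possible because $\Gamma$ is compact, so a single $C$ works globally.
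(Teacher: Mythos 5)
Your proposal is correct and follows essentially the same route as the paper's own proof: substituting Lemma \ref{lemma92} into the decomposition of Theorem \ref{operatorzerlegung}, rewriting the bracket as $\bigl(-\mathbf L^2+\tfrac 12\nabla_N^2\mathbf G_1+C\mathbf P^T\bigr)\we\mathbf P^N$, making it nonnegative for large $C$ via Lemma \ref{produkt geq 0}, and absorbing the $f_\delta$-, $f_\delta'$- and $f_\delta^2\mc L^2$-terms using $f_\delta\geq-\delta^2$, the sign structure of $f_\delta'$, and $\mc L,\mc L^2\geq 0$. No gaps to report.
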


\begin{cor}\label{geqae}
The weakly defined Riemannian curvature operator of the $W^{2,\infty}_{loc}$-metric $g_{(\delta)}$ on $M$, (recall that $g_{(\delta)}|_{M_0}=g_\delta$ and $g_{(\delta)}|_{M_1}=g_1$)
satisfies $\mc R(g_{(\delta)})\geq \kappa-\varepsilon(\delta)$ a.e.
(everywhere except on $\Gamma$).
\begin{proof}
In local coordinates the Riemannian curvature tensor of some metric $h$ is given by
\begin{eqnarray}\label{curvform}
R(h)_{ijkl}=\partial_j\partial_kh_{il}+\partial_i\partial_lh_{jk}-\partial_j\partial_lh_{ik}-\partial_i\partial_kh_{jl}
+(h^{-1}\bullet\partial h\bullet \partial h)_{ijkl}
\end{eqnarray}
where $\bullet$ means contracting tensors using the metric.
Since the second derivatives enter \eqref{curvform} linearly, $\mc R(g_{(\delta)})$ can be defined on $M$ in the weak sense. 
$\mc R(g_{(\delta)})\geq \kappa-\varepsilon(\delta)$ a.e. follows from Proposition \ref{finest} and the assumption $\mc R(g_1)\geq \kappa$ .

\end{proof}
\end{cor}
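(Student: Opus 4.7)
The plan is to reduce the statement to a pointwise check on $M_0\setminus\Gamma$ and $M_1\setminus\Gamma$, since $\Gamma\subset M$ has measure zero and can therefore be ignored for an ``almost everywhere'' conclusion. First I would verify that $\mc R(g_{(\delta)})$ is well defined as an $L^\infty_{loc}$ object: in the Fermi coordinates of Section 2 the metric $g_{(\delta)}$ has coefficients in $W^{2,\infty}_{loc}$, so its second distributional derivatives agree a.e. with bounded measurable functions. Since the coordinate expression \eqref{curvform} is linear in the second derivatives and otherwise a smooth algebraic expression in $h_{ij}$ and $\partial h_{ij}$ (the inverse $h^{-1}$ being continuous and bounded because $g_{(\delta)}$ is close to $g$), one obtains an $L^\infty_{loc}$ $(0,4)$-tensor whose restriction to either open piece $M_0\setminus\Gamma$ or $M_1\setminus\Gamma$ coincides with the classical Riemannian curvature tensor.

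With that in hand, the inequality splits into two pieces. On the open set $M_1\setminus\Gamma$ the metric $g_{(\delta)}$ equals $g_1$, which is smooth with $\mc R(g_1)\geq\kappa\,\mc I(g_1)$ by hypothesis; in particular $\mc R(g_{(\delta)})\geq(\kappa-\varepsilon(\delta))\,\mc I(g_{(\delta)})$ trivially there. On $M_0\setminus\Gamma$ the metric $g_{(\delta)}$ equals $g_\delta$, which is smooth because $f_\delta$, $F_\delta$, $\mc F_\delta$ are $C^\infty$ on $[0,\infty)$ and $\mathbf L$ is smoothly extended by Definition--Lemma \ref{defnL}. On this region Proposition \ref{finest} applies directly and gives the desired eigenvalue bound.

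Combining the two regions and discarding the null set $\Gamma$ yields $\mc R(g_{(\delta)})\geq(\kappa-\varepsilon(\delta))\,\mc I(g_{(\delta)})$ almost everywhere on $M$. The only step that requires any real care is the regularity claim $g_{(\delta)}\in W^{2,\infty}_{loc}$, but this has already been established in Remark \ref{gprops}(ii); after that the argument is essentially a matter of book-keeping between the two sides of the gluing hypersurface. I do not expect any substantive obstacle, since all the analytic work has been carried out in Proposition \ref{finest}.
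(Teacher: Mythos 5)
Your proposal is correct and follows the same route as the paper: one observes that $g_{(\delta)}\in W^{2,\infty}_{loc}$ makes the curvature well defined weakly (since the second derivatives enter the coordinate formula linearly), and then the bound follows from Proposition \ref{finest} on $M_0\setminus\Gamma$ and the hypothesis $\mc R(g_1)\geq\kappa$ on $M_1\setminus\Gamma$, with $\Gamma$ a null set. You simply spell out the book-keeping slightly more explicitly than the paper does.
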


\section {Mollifying $g_{(\delta)}$}
By mollifying $g_{(\delta)}$ we construct a family of smooth metrics with properties as required in Definition \ref{c0curv}.

\begin{prop}
There exists a family of smooth metrics $\tilde g_{(\delta)}$ such that 
\[\tilde g_{(\delta)}\rightarrow g \quad\textnormal{uniformly on compact subsets of }M\]
and 
\[\tilde {\mathcal R}_{(\delta)}\geq (\kappa-\tilde\varepsilon(\delta)) \tilde{\mathcal I}_{(\delta)}\]
holds with $\tilde{\varepsilon}(\delta)\rightarrow 0$.

\begin{proof}
Let us fix a small $\delta>0$.
We choose a locally finite cover of coordinate neighborhoods $(U_s)$ such that $U_s\subset \subset U_s'$ for some coordinate chart 
$U'_s$. Since $\Gamma $ is compact, we may assume w.l.o.g. that $U'_s\cap \Gamma=\emptyset$ for $s> N$ for some $N\in \mathbb N$.
We denote the coordinate functions of $g_{(\delta)}$ on $U'_s$ by 
$(g^s_{(\delta)})_{ij}$. After choosing $U_s'$ even smaller if necessary we may also assume that $\|(g^s_{(\delta)})_{ij}\|_{C^1(U_s')}\leq C<\infty$ for all $s\leq N$.
 For $s\leq N$ and $x\in U_s$ let 
\begin{eqnarray}\label{mol1}
(g^{s,h}_{(\delta)})_{ij}(x)=(\rho_h\ast(g^s_{(\delta)})_{ij})(x)=\int_{|z|\leq 1}\rho(z)(g^s_{(\delta)})_{ij}(x-hz)dz
\end{eqnarray}
where $\rho\in C^\infty_0(\mathbb R^n)$ satisfies $\textnormal{supp}\,\rho\subset B_1(0)$ and $\int_{\mathbb R^n}\rho=1$, and 
$h$ is small enough so that for all $s\leq N$ $x-hz$ lies in $U'_s$ for all $z\in B_1(0)$ (here we identified the coordinate neighborhoods on $M$ with the corresponding
 neighborhoods on $\mathbb R^n$).
$g_{(\delta)}^{s,h}$ is a well defined metric on $U_s$ which converges to $g_{(\delta)}|_{U_s}$ in the $C^1$-sense. 
  Let $(\eta_s)$ be a partition of unity on $M$ such that $\textnormal{supp}\,\eta_s\subset U_s$ for all $s$.
  For $h$ as above we then define a smooth metric $g^h_{(\delta)}$ on $M$ by 
\begin{eqnarray}\label{mol2}
g^h_{(\delta)}=\sum_{s\leq N} \eta_s g^{s,h}_{(\delta)}+\sum_{s>N}\eta_s g_{(\delta)}
\end{eqnarray}
We now calculate the Riemannian curvature tensor $R(g^h_{(\delta)})$ using the formula \eqref{curvform}. The terms which do not involve any derivatives of the unity functions $\eta_s$ give us just the mollified Riemannian curvature tensor $(R(g_{(\delta)}))^h$, constructed in the same way as $g_{(\delta)}^h$ in \eqref{mol1} and \eqref{mol2}.
The other terms vanish uniformly on $M$ as $h$ tends to zero. We shall verify this exemplary for one of them. After fixing a coordinate chart $(U,\varphi)$ we compute
\begin{eqnarray*}
&&|\sum_{s\leq N} \partial_j\partial_k\eta_s (g^{s,h}_{(\delta)})_{il}+\sum_{s>N}\partial_j\partial_k\eta_s (g_{(\delta)})_{il}|\\
&=&|\sum_{s\leq N} \partial_j\partial_k\eta_s (g_{(\delta)})_{il}+
\sum_{s\leq N} \partial_j\partial_k\eta_s \bigl((g^{s,h}_{(\delta)})_{il} - (g_{(\delta)})_{il}\bigr)
+\sum_{s>N}\partial_j\partial_k\eta_s (g_{(\delta)})_{il}|\\
&\leq&|\partial_j\partial_k\underbrace{(\sum_{s\geq 1}\eta_s)}_{\equiv 1}(g_{(\delta)})_{il} 
+ \sum_{s\leq N} |\partial_j\partial_k\eta_s| |(g^{s,h}_{(\delta)})_{il} - (g_{(\delta)})_{il}|\\
&\leq&
N\bigl(\max_{s=1,\dots,N}\|\eta_s\|_{C^2(U_s)}\bigr)
\bigl(\max_{s=1,\dots,N}\max_{i,l=1,\dots,n}\|(g^{s,h}_{(\delta)})_{il} - (g_{(\delta)})_{il}\||_{C^0(U_s)}\bigr)\\
&\stackrel{h\rightarrow 0}{\rightarrow}& 0
\end{eqnarray*}
All in all we obtain 
\[|(R(g^h_{(\delta)}))_{ijkl} - (R(g_{(\delta)}))^h)_{ijkl}|\leq \varepsilon(\delta,h)\]
where $\ve(\delta,h)\stackrel{h\rightarrow 0}{\rightarrow} 0$ for every fixed $\delta$, 
which implies that
\begin{eqnarray}\label{ungl2}
\mathcal R(g^h_{(\delta)})\geq(\mathcal R(g_{(\delta)}))^h   -\tilde\varepsilon(\delta,h)\mathcal I(g_{(\delta)})
\end{eqnarray}
Moreover, Corollary \ref{geqae} implies
\begin{eqnarray}\label{ungl3}
(\mathcal R(g_{(\delta)}))^h\geq(\kappa-\varepsilon(\delta))(\mathcal I(g_{(\delta)}))^h
\end{eqnarray}
holds. Indeed, for any two form $\alpha$ on $U_{s'}$ (w.l.o.g. with fixed coefficients) we have 
\begin{eqnarray*}
(\mc R(g_{(\delta)}))^{s,h}(x)(\alpha,\alpha)&=&
\int_{|z|\leq 1}\rho(z)\frac 14 (R(g_{(\delta)}))^s_{ijkl}(x-hz)\alpha^{ij}\alpha^{kl}dz\\
&\stackrel{\ref{geqae}}{\geq}&\bigl(\kappa-\varepsilon(\delta)\bigr)\int_{|z|\leq 1}\rho(z) \frac 14(\mc I(g_{(\delta)}))^s_{ijkl}(x-hz)\alpha^{ij}\alpha^{kl}dz\\
&=&\bigl(\kappa-\varepsilon(\delta)\bigr) \mc (\mc I(g_{(\delta)}))^{s,h}(x)(\alpha,\alpha)
\end{eqnarray*}
Combining \eqref{ungl2} and \eqref{ungl3} we arrive at
\begin{eqnarray*}
\mathcal R(g^h_{(\delta)})&\geq&
(\kappa-\varepsilon(\delta))(\mathcal I(g_{(\delta)}))^h   -\tilde\varepsilon(\delta,h)\mathcal I(g_{(\delta)})\\
&\geq&(\kappa-\varepsilon(\delta))(1\pm\ve(\delta))\mathcal I(g_{(\delta)}^h)   -\tilde\varepsilon(\delta,h)(1+\ve(\delta))\mathcal I(g_{(\delta)}^h)
\end{eqnarray*}
where we used the fact that for every fixed $\delta$ 
both $(\mathcal I(g_{(\delta)}))^h$ and $\mathcal I(g^h_{(\delta)})$ approach $\mathcal I(g_{(\delta)})$
 as $h$ tends to zero ($\pm$ referes to $\kappa\geq 0,\kappa<0$, respectively). 
 Since $\tilde\ve(\delta,h)\rightarrow 0$ as $h\rightarrow 0$ for every fixed $\delta$, we may choose $h$ small enough such that $\tilde\ve(\delta,h)\leq \ve(\delta)$,
 thereby obtaining
\begin{eqnarray*}
\mathcal R(g^h_{(\delta)})\geq\bigl(\kappa-(|\kappa|+3)\ve(\delta)\bigr)\mc I(g_{(\delta)}^h)
\end{eqnarray*}
and the desired result follows  with
$\tilde g_{(\delta)}=g^h_{(\delta)}$ and $\tilde\varepsilon(\delta)=(|\kappa|+3)\varepsilon(\delta)$.
\end{proof}
\end{prop}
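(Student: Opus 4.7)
The plan is a standard mollification argument executed chart-by-chart and patched together with a partition of unity, exploiting the fact that $g_{(\delta)}$ is already smooth off the compact hypersurface $\Gamma$ and lies in $W^{2,\infty}_{loc}$ everywhere (cf.\ Remark \ref{gprops}(ii)). First I fix $\delta>0$, choose a locally finite cover $(U_s)$ of $M$ with $U_s\subset\subset U_s'$ in a common coordinate chart, and arrange the indexing so that only finitely many charts (say $s\leq N$) meet a fixed neighborhood of $\Gamma$; on the remaining charts $g_{(\delta)}$ is genuinely smooth. On each $U_s$ with $s\leq N$ I produce a smooth approximation $g_{(\delta)}^{s,h}$ by convolving the coordinate matrix of $g_{(\delta)}$ with a standard mollifier $\rho_h$, and I assemble a global smooth metric via a subordinate partition of unity $(\eta_s)$ by $g_{(\delta)}^h:=\sum_{s\leq N}\eta_s\,g_{(\delta)}^{s,h}+\sum_{s>N}\eta_s\,g_{(\delta)}$. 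Since $g_{(\delta)}\in W^{1,\infty}_{loc}$ globally, each $g_{(\delta)}^{s,h}$ converges to $g_{(\delta)}$ in $C^1$ on $U_s$, so $g_{(\delta)}^h\to g_{(\delta)}$ uniformly on compact sets; combined with $g_{(\delta)}\to g$ (Remark \ref{gprops}(i)), this gives the required $C^0$-convergence $\tilde g_{(\delta)}\to g$.

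The heart of the argument is a local estimate of the form
\begin{equation*}
\mathcal R(g_{(\delta)}^h)\geq\bigl(\mathcal R(g_{(\delta)})\bigr)^h-\tilde\varepsilon(\delta,h)\,\mathcal I(g_{(\delta)}),
\end{equation*}
where $\tilde\varepsilon(\delta,h)\to 0$ as $h\to 0$ for each fixed $\delta$, and where $(\cdot)^h$ denotes the analogous partition-of-unity mollification of the tensor inside the parentheses. To obtain it I expand $R(g_{(\delta)}^h)$ using formula \eqref{curvform}. The linear (second-derivative) terms decompose by Leibniz into the mollified curvature $(R(g_{(\delta)}))^h$ plus cross terms each containing a derivative of some $\eta_s$ multiplied either by $g_{(\delta)}^{s,h}-g_{(\delta)}$ (which vanishes uniformly as $h\to 0$) or by a prefactor that sums to zero via $\sum_s\eta_s\equiv 1$, exactly as in the sample calculation already recorded in the excerpt. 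The nonlinear $(h^{-1}\bullet\partial h\bullet \partial h)$ terms are handled by the same trick, now using $\sum_s\partial\eta_s\equiv 0$ and $C^1$-convergence of $g_{(\delta)}^{s,h}$.

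Finally I pass from the a.e.\ bound of Corollary \ref{geqae} to the mollifications: convolution against the nonnegative kernel $\rho_h$ preserves the pointwise inequality of symmetric bilinear forms, so $(\mathcal R(g_{(\delta)}))^h\geq(\kappa-\varepsilon(\delta))(\mathcal I(g_{(\delta)}))^h$ on each chart, and the partition-of-unity gluing propagates this to $M$. Combining with the previous paragraph and replacing $(\mathcal I(g_{(\delta)}))^h$ by $\mathcal I(g_{(\delta)}^h)$ up to a $C^0$-error vanishing in $h$, I arrive at
\begin{equation*}
\mathcal R(g_{(\delta)}^h)\geq\bigl(\kappa-(|\kappa|+3)\varepsilon(\delta)\bigr)\,\mathcal I(g_{(\delta)}^h)
\end{equation*}
once $h$ is chosen small enough (depending on $\delta$) that $\tilde\varepsilon(\delta,h)\leq\varepsilon(\delta)$. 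Setting $\tilde g_{(\delta)}:=g_{(\delta)}^{h(\delta)}$ and $\tilde\varepsilon(\delta):=(|\kappa|+3)\varepsilon(\delta)$ yields the desired family.

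The main technical obstacle is the careful bookkeeping of the partition-of-unity cross terms in the second step: one must check that every derivative of $\eta_s$ which enters $R(g_{(\delta)}^h)$ is paired with a factor that is $o(1)$ in $h$, which is where the cancellations $\sum_s\eta_s\equiv 1$ and $\sum_s\partial\eta_s\equiv 0$, together with uniform $C^1$-convergence of the mollifications (guaranteed by $g_{(\delta)}\in W^{1,\infty}_{loc}$ across $\Gamma$), are essential. Everything else amounts to a diagonal choice $h=h(\delta)\to 0$ sufficiently fast.
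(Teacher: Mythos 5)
Your proposal is correct and follows essentially the same route as the paper: chart-wise mollification glued by a partition of unity, the estimate $\mathcal R(g^h_{(\delta)})\geq(\mathcal R(g_{(\delta)}))^h-\tilde\varepsilon(\delta,h)\mathcal I(g_{(\delta)})$ via the curvature formula and cancellation of the $\eta_s$-derivative terms, preservation of the a.e.\ lower bound under convolution with the nonnegative kernel, and a diagonal choice $h=h(\delta)$. Your explicit treatment of the nonlinear $(h^{-1}\bullet\partial h\bullet\partial h)$ terms via $C^1$-convergence is a minor elaboration of a step the paper only gestures at, not a different argument.
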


\section{A similar result for other operators}
As mentioned in the introduction, an analogue result can be shown for manifolds with lower bounds on the Ricci curvature, scalar curvature, bi-curvature,
isotropic curvature and flag curvature, respectively.
\subsection{Manifolds with Ricci curvature $\geq \kappa$}\label{ricci section}

\begin{thm}
Let $(M_0,g_0)$, $(M_1,g_1)$, $(M,g)$ and $L=L_0+L_1$ be as in Theorem \ref{main}. 
Suppose that $\Ric(g_0)$ and $\Ric(g_1)$ are at least $\kappa$ (in the sense of eigenvalues). If
$L$ is positive semidefinite, then $\Ric(g)$ is at least $\kappa$ (in a similar sense as in Definition \ref{c0curv}). 
\begin{proof}

Given a symmetric bilinear form $\mc T$ on $\Lambda^2(TM)$ and a metric $h$ we denote 
\[\Ric_h(\mc T)=h^{jl}T(\cdot,\partial_j,\cdot,\partial_l)\]
where $T(\partial_i,\partial_j,\partial_k,\partial_l)=\mc T(\partial_i\we\partial_j,\partial_k\we\partial_l)$.
The strategy of the proof is similar to that of the proof of Theorem \ref{main}. We show 
\begin{itemize}
\item[(a)]
The curvature operator of the modified metric $g_\delta$ on $M_0$ satisfies $\Ric_{g_\delta}(\mc R_\delta)\geq \bigl(\kappa-\varepsilon(\delta)\bigr)g_\delta$ 
with $\varepsilon(\delta)\rightarrow 0$ (this corresponds with Lemma \ref{finest})\\
\item[(b)] By mollifying $g_{(\delta)}$ we construct a family of smooth metrics which approximate $g$ in the $C^0$-sense and have Ricci curvature at least
$\kappa -\varepsilon(\delta)$.
\end{itemize}
\noindent
(a):
Here we may simplify the argument of the previous sections. Recall that we identify endomorphisms and bilinear forms on $TM_0$ in the sense of Notation \ref{optens}.
In view of this identification, we have $g=\id_{TM_0}$. Since $g_\delta\approx g$ on $M_0$, it suffices to show 
$\Ric_{g_\delta}(\mc R_\delta)\geq\bigl(\kappa-\ve(\delta)\bigr)\id_{TM_0}$.
By \eqref{RdeltaOp} we have 
\begin{eqnarray*}
\Ric_{g_\delta}(\mc R_\delta) &\geq& \Ric_{g_\delta}(\mc R)-f_\delta^2\Ric_{g_\delta}(\mc A) + f_\delta\Ric_{g_\delta}( \mc B)\\
&-& 2f_\delta'\Ric_{g_\delta}(\mc L) +2f_\delta^2\Ric_{g_\delta}(\mc L^2) + 2 C f_\delta \Ric_{g_\delta}(  \hat {\mc I})-\varepsilon(\delta) \id_{TM_0}\nonumber
\end{eqnarray*}
Since $f_{\delta}$ is bounded and $g_\delta\rightarrow g$ in the $C^0$-sense, we may replace $\Ric_{g_\delta}$ by $\Ric_g$ everywhere except in the $f_\delta '$ term,
i.e. we have
\begin{eqnarray}\label{ric}
\Ric_{g_\delta}(\mc R_\delta) &\geq& \Ric_{g}(\mc R)-f_\delta^2\Ric_{g}(\mc A) + f_\delta\Ric_{g}( \mc B)\\
&-& 2f_\delta'\Ric_{g_\delta}(\mc L) +2f_\delta^2\Ric_{g}(\mc L^2) + 2 C f_\delta \Ric_{g}(  \hat {\mc I})-\varepsilon(\delta) \id_{TM_0}\nonumber
\end{eqnarray}
Recall that $\hat{\mc I}=\mb P^T\we\mb P^N$ (cf. Notation \ref{proj op}). We compute
\begin{eqnarray}\label{ric4}
\bigl(\Ric_g(\mb P^T\we\mb P^N)\bigr)_{ik}&=&\frac 12g^{jl}(P^T_{ik}P^N_{jl}-P^T_{jk}P^N_{il}+P^N_{ik}P^T_{jl}-P^N_{jk}P^T_{il})\nonumber\\
&=&
\frac 12(\tr_g (\mb P^N) P^T_{jk}+\tr_g(\mb P^T)P^N_{jk})\nonumber\\
&=&
\frac 12(P^T_{ik} + (n-1)P^N_{ik})
\end{eqnarray}
If we assume that $n\geq 2$ (the case $n=1$ is trivial), this implies
\begin{eqnarray}
\Ric_g(\hat{\mc I})\geq \frac 12 (\mb P^T+\mb P^N) = \frac 12\id_{TM_0}
\end{eqnarray}
Therefore, using the assumption $\Ric_g(\mc R)\geq \kappa$, we can estimate the right hand side of \eqref{ric} from below by
\begin{eqnarray}\label{ric2}
\Ric_{g_\delta}(\mc R_\delta) &\geq&( \kappa-\ve(\delta))\id_{TM_0}-f_\delta^2\Ric_{g}(\mc A) + f_\delta\Ric_{g}( \mc B)\nonumber\\
&-& 2f_\delta'\Ric_{g_\delta}(\mc L) +2f_\delta^2\Ric_{g}(\mc L^2) +  C f_\delta \id_{TM_0}\nonumber\\
&=&( \kappa-\ve(\delta))\id_{TM_0}
- 2f_\delta'\Ric_{g_\delta}(\mc L) \nonumber\\
&+& f_\delta \bigl(-f_{\delta}\Ric_{g}(\mc A) + \Ric_{g}( \mc B)+2f_\delta\Ric_{g}(\mc L^2) +  C \id_{TM_0}\bigr )
\end{eqnarray}
The operators $\mc A$, $\mc B$ and $\mc L^2$ are smooth and hence uniformly bounded near $\Gamma$. Therefore, the term in parenthesis in \eqref{ric2} 
is 
nonnegative for large enough fixed $C$ and bounded from above\footnote{
Note that at this point we simplified the argument of Section \ref{riem est section}. $\Ric_g(\mb P^T\we\mb P^N)$ is estimated from below 
by the positive definite operator $\frac 12\id_{TM_0}$, 
hence the $\mc A,\mc B$ and $\mc L^2$ terms are absorbed by $C\id_{TM_0}$. When considering the full curvature tensor, the corresponding operator
$C\mb P^T\we\mb P^N$ has nontrivial kernel, which is why we needed the concavity argument of Lemma \ref{lemma92}.

}.
 Since $f_{\delta}\in [-\delta^2, 1]$, the last expression in \eqref{ric2} is $\geq-\ve(\delta)\id_{TM_0}$,
and we arrive at
\begin{eqnarray}\label{ric3}
\Ric_{g_\delta}(\mc R_\delta) &\geq&( \kappa-\ve(\delta))\id_{TM_0}
- 2f_\delta'\Ric_{g_\delta}(\mc L)
\end{eqnarray}
Finally, we compute the $f_\delta'$-term in \eqref {ric3}. Let us fix a point $x\in M_0$ near $\Gamma$.
In the construction of local coordinates in Section 2 we may additionally choose $x^1,\dots,x^{n-1}$ such that $\partial_1(x),\dots,\partial_{n-1}(x)$
are orthonormal with respect to $g(x)$ and $L(x)$ is diagonal. By construction this implies that $g_\delta(x)$ is diagonal, 
$(g_\delta)_{jl}(x)=\mu_l\delta_{jl}$, where $\mu_l >0$ since $g_\delta$ is positive definite. Moreover, we still have 
$(P^N)_{ij}=\delta_{in}\delta_{jn}$ in these coordinates. Therefore, given a vector $\xi$, in $x$ we compute using $L_{kn}=0$ for $k=1,\dots,n$:

\begin{eqnarray*}
(\Ric_{g_\delta}(\mc L))(\xi,\xi)&=&
g_\delta^{jl}(\mathbf L\we\mathbf P^N)_{ijkl}\xi^i\xi^k =\sum_{l=1}^n \frac {1}{\mu_l}(\mathbf L\we\mathbf P^N)_{ilkl}\xi^i\xi^k \\
&=&\frac 12 \sum_{l=1}^n \frac{1}{\mu_l}(L_{ik}P^N_{ll} - L_{lk}P^N_{il} +  P^N_{ik}L_{ll}   - P^N_{lk}L_{il})\xi^i\xi^k\\
&=&\frac 12\frac {1}{\mu_n} L(\xi,\xi) + \frac 12 (\xi^n)^2\sum_{l=1}^n\frac{1}{\mu_l}L_{ll}\geq 0
\end{eqnarray*}
since $L\geq 0$ by assumption.
We then proceed as in the proof of Proposition \ref{finest}
and estimate the $f_\delta'$-term from below by $-\varepsilon(\delta)\id_{TM_0}$.
\\
\\
(b): Let us fix a $\delta>0$. We construct the metrics $g^h_{(\delta)}$ as in Section 6. By \eqref{ungl2} and since $g^h_{(\delta)}\rightarrow g_{(\delta)}$
uniformly as $h\rightarrow 0$ we have
\[
\Ric_{g^h_{(\delta)}}(\mc R(g^h_{(\delta)}))
\geq \Ric_{g_{(\delta)}}(\mathcal R(g_{(\delta)}))^h   -\tilde\varepsilon(\delta,h)(g_{(\delta)})
\]
where $\tilde\ve(\delta,h)\rightarrow 0$ as $h\rightarrow 0$.
Given a vectorfield $X$ on $U_{s'}$ which has constant coefficients not exceeding 1, on $U_s$ we compute 
using (a) and the mean value theorem
\begin{eqnarray*}
\Ric_{g_{(\delta)}}(\mathcal R(g_{(\delta)}))^{h,s}(x)(X,X)&=&
\int_{|z|\leq 1}\rho(z) (g_{(\delta)})^{jl}(x)(R(g_{(\delta)}))^s_{ijkl}(x-hz)X^iX^k dz\\
&=&\int_{|z|\leq 1}\rho(z) (g_{(\delta)})^{jl}(x-hz)(R(g_{(\delta)}))^s_{ijkl}(x-hz)X^iX^k dz\\
&+&h\int_{|z|\leq 1}\rho(z) D(g_{(\delta)})^{jl}(\xi_{x,hz})z(R(g_{(\delta)}))^s_{ijkl}(x-hz)X^iX^k dz\\
&\geq& \bigl(\kappa-\varepsilon(\delta)\bigr)g_{(\delta)}^{s,h}(X,X)-hC(\delta)\\
&\geq& \bigl(\kappa-2\varepsilon(\delta)\bigr)g_{(\delta)}^{s,h}(X,X)
\end{eqnarray*}
where $\xi_{x,hz}=(1-t)x+thz$ for some $t\in[0,1]$, and $C(\delta)$ depends on the bound of $\mc R(g_{(\delta)})$ near $\Gamma$, which is finite for every fixed $\delta$.
Note that for every fixed $\delta$ we may choose $h$ small enough so that $hC(\delta)\leq \ve(\delta)$. Since $U_{s}\cap \Gamma\neq\emptyset$ only for finitely many 
$s$, we deduce

\[
\Ric_{g_{(\delta)}}(\mathcal R(g_{(\delta)}))^{h}
\geq \bigl(\kappa-2\varepsilon(\delta)\bigr)g_{(\delta)}^{h}\]
Thus
\[
\Ric_{g^h_{(\delta)}}(\mc R(g^h_{(\delta)}))
\geq \bigl(\kappa-2\varepsilon(\delta)\bigr)g_{(\delta)}^{h}   -\tilde\varepsilon(\delta,h)(g_{(\delta)})
\]
Finally we choose $h$ even smaller such that $\tilde\ve(\delta,h)\leq\ve(\delta)$ and $g_{(\delta)}\leq(1+\ve(\delta))   g_{(\delta)}^{h}$,
and the result follows with $\tilde g_{(\delta)}=g_{(\delta)}^{h}$ and $\tilde \ve(\delta)=4\ve(\delta)$.

\end{proof}
\end{thm}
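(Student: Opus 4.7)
The plan is to reuse the modified metric $g_\delta$ on $M_0$ and the global $W^{2,\infty}_{loc}$-metric $g_{(\delta)}$ constructed in Section 2, and to run the argument of Theorem \ref{main} through the Ricci trace. Two things change compared to the curvature operator case. First, applying $\Ric_{g_\delta}$ to the decomposition in Theorem \ref{operatorzerlegung} turns each piece into a $(0,2)$-tensor on $TM_0$. Second, and crucially, the ``identity piece'' becomes positive definite: a short Kulkarni--Nomizu computation gives
$\Ric_g(\hat{\mc I}) = \Ric_g(\mb P^T \we \mb P^N) = \tfrac12(\mb P^T + (n-1)\mb P^N) \geq \tfrac12 \id_{TM_0}$.
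This is the key simplification: because this piece dominates rather than vanishing on $(T\Gamma(d))^\perp$, a single large constant $C$ absorbs the $\mc A$, $\mc B$, $\mc L^2$ contributions outright, and the delicate concavity argument of Lemma \ref{lemma92} can be dropped.

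For step (a), I apply $\Ric_{g_\delta}$ to \eqref{RdeltaOp}. Because $g_\delta \to g$ in $C^0$ and $f_\delta, f_\delta^2$ stay bounded, I replace $\Ric_{g_\delta}$ by $\Ric_g$ in every term except the one multiplied by $f_\delta'$ (whose $C^0$ bound blows up like $\delta^{-4}$), at the cost of an $\ve(\delta)\id_{TM_0}$ error. The hypothesis $\Ric_g(\mc R) \geq \kappa g$ handles the $\mc R$ term. Since $\mc A$, $\mc B$, $\mc L^2$ are smooth and hence uniformly bounded near $\Gamma$, choosing $C$ large makes
$-f_\delta \Ric_g(\mc A) + \Ric_g(\mc B) + 2 f_\delta \Ric_g(\mc L^2) + C \id_{TM_0}$
uniformly nonnegative and bounded, so multiplying by $f_\delta \in [-\delta^2, 1]$ contributes only $-\ve(\delta)\id_{TM_0}$. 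For the remaining $-2 f_\delta' \Ric_{g_\delta}(\mc L)$ term, the hypothesis $L \geq 0$ is essential: it implies $\mb L \we \mb P^N \geq 0$ by Lemma \ref{produkt geq 0}, and a direct computation at a point where $g(x)$ and $L(x)$ are simultaneously diagonalized (so that $g_\delta(x)$ is also diagonal with positive entries $\mu_l$ and $L_{in} = 0$) shows $\Ric_{g_\delta}(\mc L)(\xi,\xi)$ is a sum of manifestly nonnegative terms. Combined with $f_\delta' \leq 0$ on $[0,\delta^4]$ and $|f_\delta'| \leq \delta$ on $[\delta^4,\delta]$, this term contributes at worst $-\ve(\delta)\id_{TM_0}$, completing (a).

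For step (b), I mollify $g_{(\delta)}$ chart-by-chart exactly as in Section 6, producing smooth metrics $g^h_{(\delta)}$. The derivatives-of-partition-of-unity terms in $\mc R(g^h_{(\delta)})$ vanish uniformly because $\sum_s \eta_s \equiv 1$, so $\mc R(g^h_{(\delta)}) = (\mc R(g_{(\delta)}))^h + O_\delta(h)$. The extra subtlety for Ricci is that $\Ric_{g_{(\delta)}}$ pairs $g^{jl}_{(\delta)}(x)$ with mollified curvature evaluated at $x - hz$; a mean value theorem bridges this at cost $h$ times the $L^\infty$ bound of $\mc R(g_{(\delta)})$ near $\Gamma$, which for fixed $\delta$ is finite and can be absorbed into $\ve(\delta)$ by shrinking $h$. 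Combining with step (a) and a final $\ve$-adjustment swapping $g_{(\delta)}$ for $g^h_{(\delta)}$ yields the desired smooth family. The main obstacle I anticipate is the sign verification $\Ric_{g_\delta}(\mb L \we \mb P^N) \geq 0$, since this is the only place where $L \geq 0$ enters essentially; once secured, the rest is bookkeeping already present in the curvature operator proof.
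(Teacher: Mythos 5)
Your proposal is correct and follows essentially the same route as the paper: tracing the decomposition \eqref{RdeltaOp}, using $\Ric_g(\hat{\mc I})=\tfrac12(\mb P^T+(n-1)\mb P^N)\geq\tfrac12\id_{TM_0}$ to absorb the $\mc A$, $\mc B$, $\mc L^2$ terms with a large $C$ (thereby bypassing the concavity argument of Lemma \ref{lemma92}), verifying $\Ric_{g_\delta}(\mc L)\geq 0$ by the diagonalizing-coordinates computation where $L\geq 0$ enters, and then mollifying as in Section 6 with the mean value theorem handling the evaluation-point mismatch. No substantive differences from the paper's proof.
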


\subsection{Manifolds with scalar curvature $\geq \kappa$}\label{scalar section}
The scalar curvature of a $C^2$-smooth Riemannian metric $g$ is defined as $\sca(g)=\tr_g\Ric_g=g^{ik}g^{jl}R^g_{ijkl}$.
As mentioned in the introduction, in the scalar curvature case we may weaken the assumption $L\geq 0$ on $\Gamma$
to $\tr_gL\geq 0$ on $\Gamma$, i.e. the sum of the mean curvatures of $g_0$ and $g_0$ is nonnegative. 
\begin{thm}\label{scalar case}
Let $(M_0,g_0)$, $(M_1,g_1)$, $(M,g)$ and $L=L_0+L_1$ be as in Theorem \ref{main}. 
Suppose that $\sca(g_0)$ and $\sca(g_1)$ are at least $\kappa$. 
If $\tr_g L\geq 0$ on $\Gamma$, then $\sca(g)\geq \kappa$,(in a similar sense as in Definition \ref{c0curv}).

\end{thm}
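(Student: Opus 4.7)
The proof follows the strategy of the Ricci curvature case in Section 6.1, with further simplifications afforded by taking a second trace. The plan is first to show $\sca(g_\delta)\geq \kappa-\varepsilon(\delta)$ on $M_0$ by tracing the decomposition of Theorem \ref{operatorzerlegung}, and then to mollify $g_{(\delta)}$ exactly as in Section 6 (respectively, as in part (b) of the Ricci theorem above). Starting from
\[\mc R_\delta \approx \mc R - f_\delta^2\mc A + f_\delta \mc B - 2f_\delta'\mc L + 2f_\delta^2 \mc L^2 + 2Cf_\delta \hat{\mc I},\]
I contract both sides with $g^{ik}g^{jl}$, replacing $g_\delta$ by $g$ in the contractions up to $\varepsilon(\delta)$-errors coming from $g_\delta\to g$ uniformly. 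The essential trace identities follow from the definition of the Kulkarni-Nomizu product together with $\mb L N=0$ (hence $\mb L\mb P^N=0=\mb L^2\mb P^N$), $\tr_g \mb P^N=1$, and $\tr_g \mb P^T=n-1$:
\[g^{ik}g^{jl}(\mb P^T \we \mb P^N)_{ijkl}=n-1,\quad g^{ik}g^{jl}(\mb L \we \mb P^N)_{ijkl}=\tr_g L,\quad g^{ik}g^{jl}(\mb L^2\we \mb P^N)_{ijkl}=\tr_g(\mb L^2),\]
and $g^{ik}g^{jl}(\mb L \we \mb L)_{ijkl}=(\tr_g L)^2-\tr_g(\mb L^2)$.

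Inserting these into the traced decomposition yields, with $b_\delta:=g^{ik}g^{jl}\mc B_{ijkl}$ (a smooth function, uniformly bounded near $\Gamma$),
\[\sca(g_\delta)\geq \sca(g)-2f_\delta'\tr_g L + f_\delta\bigl(-f_\delta((\tr_g L)^2-\tr_g L^2)+b_\delta+2f_\delta \tr_g L^2+2(n-1)C\bigr)-\varepsilon(\delta).\]
Because $\mc A,\mc B,\mc L^2$ are smooth and uniformly bounded near $\Gamma$, choosing $C$ large enough (independently of $\delta$) makes the parenthesized expression bounded below by a positive constant in a neighborhood of $\Gamma$; since $f_\delta\in[-\delta^2,1]$, the product $f_\delta\cdot(\cdot)$ contributes at worst $-\varepsilon(\delta)$. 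To control $-2f_\delta'\tr_g L$, note that $\mb L$ is extended off $\Gamma$ by parallel transport along the integral curves of $N$ (Definition, Lemma \ref{defnL}), and parallel transport is an isometry, so the eigenvalues of $\mb L$, and thus $\tr_g\mb L$, are constant along these curves. Hence the hypothesis $\tr_g L\geq 0$ on $\Gamma$ propagates to the entire modification region. Combined with $f_\delta'\leq 0$ on $[0,\delta^4]$, $f_\delta'\leq \delta$ on $[\delta^4,\delta]$, and the boundedness of $\tr_g L$, this gives $-2f_\delta'\tr_g L\geq -\varepsilon(\delta)$. Since $\sca(g_0)\geq\kappa$ by hypothesis, we conclude $\sca(g_\delta)\geq\kappa-\varepsilon(\delta)$ on $M_0$.

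Together with $\sca(g_1)\geq\kappa$ on $M_1$, this yields $\sca(g_{(\delta)})\geq \kappa-\varepsilon(\delta)$ a.e.\ on $M$ in the weak sense (as in Corollary \ref{geqae}). The final step is to mollify $g_{(\delta)}$ to produce the required $C^\infty$-family; this proceeds exactly as in the Ricci case of Section 6.1 (b): constructing $g^h_{(\delta)}$ by local convolution and partition-of-unity patching, writing $\sca(g^h_{(\delta)})=(\sca(g_{(\delta)}))^h+O(h)$ (the error coming from derivatives landing on the cutoff functions), and using the mean value theorem to commute $(g_{(\delta)})^{jl}$ through the convolution integral. Setting $\tilde g_{(\delta)}:=g^{h(\delta)}_{(\delta)}$ for a suitable $h(\delta)\to 0$ completes the proof. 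I expect the main obstacle to be the careful bookkeeping of step one: in particular, the observation that the double contraction of $\mc L$ equals exactly $\tr_g L$ is what makes the weaker boundary hypothesis $\tr_g L\geq 0$ (rather than $\mb L\geq 0$) sufficient for controlling the delicate $f_\delta'$-term.
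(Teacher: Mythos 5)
Your trace identities and the treatment of the zeroth-order terms (absorbing $\mc A$, $\mc B$, $\mc L^2$ into $2(n-1)Cf_\delta$) match the paper, as does the observation that parallel transport preserves $\tr_g L$ along the normal curves. The gap is in the $f_\delta'$-term, and it is exactly the point where the weak hypothesis $\tr_g L\geq 0$ is delicate. To bound $\sca(g_\delta)$ you must contract $R_\delta$ with $g_\delta^{-1}\otimes g_\delta^{-1}$, and your claim that this may be replaced by $g^{-1}\otimes g^{-1}$ ``up to $\ve(\delta)$-errors'' fails for the term $-2f_\delta'\mc L$: on $[0,\delta^4]$ one has $|f_\delta'|=\delta^{-4}$ while $g_\delta-g=2F_\delta L-2C\mc F_\delta P^T=O(\delta^4)$, so the replacement error is $O(1)$, not $o(1)$. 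Concretely, in coordinates where $g_{ij}=\delta_{ij}$, $L_{ij}=\lambda_i\delta_{ij}$, $(g_\delta)_{ij}=\mu_i^\delta\delta_{ij}$ one gets
\begin{equation*}
-2f_\delta'\,\sca_{g_\delta}(\mc L)\;=\;-2f_\delta'\sum_{i=1}^{n-1}\frac{\lambda_i}{\mu_i^\delta}\;\approx\;-2f_\delta'\,\tr_g L\;+\;4f_\delta' F_\delta\sum_i\lambda_i^2\;-\;4Cf_\delta'\mc F_\delta\,\tr_gL ,
\end{equation*}
and at a point of $\Gamma$ where $\tr_g L=0$ but $L\neq 0$ the middle term is $\approx -2\sum_i\lambda_i^2$ at depth $x^n=\delta^4$ (since $f_\delta'=-\delta^{-4}$, $F_\delta(\delta^4)=\delta^4/2$): a fixed negative quantity that does not vanish as $\delta\to 0$ and that your bound $-2f_\delta'\tr_gL\geq 0$ does not see. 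So your argument proves the theorem only under the strict hypothesis $\tr_g L>0$ (where $-2f_\delta'\tr_gL\sim \delta^{-4}\tr_gL$ dominates this error, or equivalently $\sca_{g_\delta}(\mc L)\geq 0$ for small $\delta$), not under $\tr_g L\geq 0$.

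The paper closes this gap with an extra step you are missing: it first proves the statement assuming $\tr_g L>0$ on $\Gamma$ (keeping the $g_\delta$-contraction in the $f_\delta'$-term and computing $\sca_{g_\delta}(\mc L)=\sum_i\lambda_i/\mu_i^\delta\geq 0$ for small $\delta$ by compactness of $\Gamma$), and then reduces the general case to this one by perturbing $g_0$ near $\Gamma$: replace $\hat g_0$ by $\vf(x^n)\hat g_0$ with $\vf(0)=1$, $\vf'(0)<0$, $\vf\equiv 1$ for $x^n\geq d_0$ and $|\vf'|,|\vf''|\leq\ve$. This leaves the boundary isometry intact, increases the mean curvature sum by $-\tfrac n2\vf'(0)>0$, and changes $\sca(g_0)$ only by $O(\ve)$, after which the strict-case argument applies. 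Your proposal needs either this reduction or some other mechanism to handle points where $\tr_g L=0$ but $L\neq 0$; as written, the step replacing $g_\delta$ by $g$ in the contraction of the $f_\delta'$-term is not justified and the resulting estimate is false in general.
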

\begin{proof}
First let us assume $\tr_gL>0$ on $\Gamma$. In analogy to Lemma \ref{defnL}, we need to
verify that the extension of $L$ satisfies $\tr_g L>0$, if so does the initial operator on $\Gamma$. In fact, for $x\in M_0$ near $\Gamma$ 
we have $\tr_{g(x)}L(x)=\tr_{g(\hat x)}L(\hat x)$, where $\hat x$ is the point of $\Gamma$ nearest to $x$. 
Indeed, let $x\in M_0$ be a point near $\Gamma$ such that the extention $\mb L$ is well defined in $x$.
Recall that 
for $X\in T_xM_0$ we defined $\mb L X=P^{-1}\mb L PX$, where $P$ is the parallel transportation along the integral curves of the normal field $N$, which takes $X\in T_xM_0$ to 
$PX\in T_{\hat x}M_0$. Let $e_1,\dots e_n$ be an orthonormal basis of $T_xM_0$, and let $g_{ij}(x)=\la e_i,e_j\ra_{g(x)}=\delta_{ij}$ and $L_{ij}(x)=\la\mb L (x)e_i,e_j\ra_{g(x)}$.
We compute
\begin{eqnarray}\label{trace L}
\tr_{g(x)} L(x)&=&
g^{ij}(x)L_{ij}(x) =\sum_{i=1}^n\la \mb L(x) e_i,e_j\ra_{g(x)}\nonumber\\
&=&\sum_{i=1}^n\la P^{-1}\mb L(\hat x) Pe_i,e_j\ra_{g(x)}
=\sum_{i=1}^n\la \mb L(\hat x) Pe_i,P e_j\ra_{g(\hat x)}\nonumber\\
&=& \sum_{i=1}^nL(\hat x)(Pe_i, Pe_i) =\tr_{g_(\hat x)} L(\hat x)
\end{eqnarray}
since $Pe_1,\dots, Pe_n$ is an orthonormal basis of $T_{\hat x}M_0$.
\\

Given a metric $h$ and a bilinear form $\mc T\in \Lambda^2(TM)$ we denote:
\[\sca_h(\mc T)=h^{ik}h^{kl} T_{ijkl}\]
where $T_{ijkl}=\mc T(\partial_i\we\partial_j,\partial_k\we\partial_l)$.
By \eqref{RdeltaOp} we have 
\begin{eqnarray}\label{scalar ineq}
\sca_{g_\delta}(\mc R_\delta) &\geq& \sca_{g_\delta}(\mc R)-f_\delta^2\sca_{g_\delta}(\mc A) + f_\delta\sca_{g_\delta}( \mc B)\nonumber\\
&-& 2f_\delta'\sca_{g_\delta}(\mc L) +2f_\delta^2\sca_{g_\delta}(\mc L^2) + 2 C f_\delta \sca_{g_\delta}(  \hat {\mc I})-\varepsilon(\delta) \nonumber\\
&\geq& \sca_g(\mc R)-f_\delta^2\sca_g(\mc A) + f_\delta\sca_g( \mc B)\nonumber\\
&-& 2f_\delta'\sca_{g_\delta}(\mc L) +2f_\delta^2\sca_g(\mc L^2) + 2 C f_\delta \sca_g(  \hat {\mc I})-\varepsilon(\delta)
\end{eqnarray}
where we used that $g_\delta\rightarrow g$ in the $C^0$-sense and the fact that all terms, except for the $f_\delta'$-term, remain bounded as $\delta\rightarrow 0$.
In view of \eqref{ric4} we have
\begin{eqnarray*}
\sca_g{\hat{\mc I}}=\frac 12 g^{ik}(P^T_{ik}+(n-1)P^N_{ik})=n-1>0
\end{eqnarray*}
if  $n\geq 2$. Similarly as in the previous section, we use the assumption $\sca_g(\mc R)\geq 0$ and the fact that $\mc A$, $\mc B$ and $\mc L^2$ are bounded near $\Gamma$
and $f_\delta$ is almost nonnegative, so that after choosing $C$ large enough, we may estimate \eqref{scalar ineq} from below by
\begin{eqnarray}\label{sca1}
\sca_{g_\delta}(\mc R_\delta)&\geq&\kappa-\ve(\delta)- 2f_\delta'\sca_{g_\delta}(\mc L) 
\end{eqnarray}

Consider  the $f'_\delta$-term in the above expression. As in the previous section, in some point $x\in M_0$ near $\Gamma$ we may choose local coordinates such that
$g_{ij}=\delta_{ij}$, $L_{ij}=\lambda_i\delta_{ij}$, $(g_\delta)_{ij}=\mu^\delta_i\delta_{ij}$ and $P^N_{ij}=\delta_{in}\delta_{jn}$. In these coordinates we have
(recall that $\lambda_n=L_{nn}=0$ and $\mu^\delta_n=1$)
\begin{eqnarray}\label{bed b}
(\sca_{g_\delta}(\mc L))&=&
g_\delta^{ik}g_\delta^{jl}(\mathbf L\we\mathbf P^N)_{ijkl} =\sum_{i,j=1}^n \frac {1}{\mu^\delta_i}\frac{1}{\mu^\delta_j}(\mathbf L\we\mathbf P^N)_{ijij} \nonumber\\
&=&\frac 12 \sum_{i,j=1}^n \frac{1}{\mu^\delta_i}\frac{1}{\mu^\delta_j}(L_{ii}P^N_{jj} - L_{ij}P^N_{ij} +  P^N_{ii}L_{jj}   - P^N_{ij}L_{ij})\nonumber\\
&=&\frac{1}{\mu^\delta_n}\sum_{i=1}^n \frac{1}{\mu^\delta_i}\lambda_i=\sum_{i=1}^{n-1} \frac{1}{\mu^\delta_i}\lambda_i
\end{eqnarray}
Note that the eigenvalues $\mu_i^\delta\rightarrow 1$ since $g_\delta\rightarrow g$ uniformly, hence $\tr_g(\mb L)=\sum_{i=1}^n\lambda_i>0$ implies 
\[
\sum_{i=1}^{n-1} \frac{1}{\mu^\delta_i}\lambda_i\geq (1-\ve(\delta))\sum_{i=1}^{n-1}\lambda_i\geq 0
\]
for small enough $\delta$. We then proceed as in the previous section and estimate the $f_\delta'$-term in \eqref{sca1} from below by $-\ve(\delta)$, which 
completes the proof for the case $\tr_g L>0$ on $\Gamma$.\\
\\
Let us now study the case where $\tr_gL\geq 0$ on $\Gamma$. In this case we may slightly modify either one of the initial metrics $g_0$ or $g_1$ near the boundary, such that 
$\tr_gL$ becomes strictly positive, and then repeat the argument above. More precisely, consider $g_0$ near $\Gamma$. Recall that in local coordinates $(x^1,\dots,x^n)$ we chose in Section 2, 
$g_0$ has the form  
\[g_0=
\begin{pmatrix}
  \hat g_0  & 0\\
  0& 1    
 \end{pmatrix}
 \]
where $\hat g$ is the restriction of $g$ to the equidistant hypersurfaces $\Gamma(d)$, $d=dist_g(\Gamma,\cdot)=x^n$. Let $d_0>0$ small enough so that 
$\Gamma(d)$ is smooth for $d\leq d_0$. We find a smooth function $\vf:\mathbb R_{\geq 0}\rightarrow \mathbb R_{\geq0}$ satisfying
\[\vf(0)=1,\,
\vf' (0)<0,\,
\vf|_{[d_0,\infty)}\equiv1 \, \textnormal{ and } |\vf'|,|\vf''|\leq\ve
\]
with $\ve$ small, and put 
\[\tilde g_0=
\begin{pmatrix}
  \vf(x^n)\hat g_0  & 0\\
  0& 1    
 \end{pmatrix}
 \]
 Note that in view of $\vf(0)=1$ we have $\tilde g_0|_\Gamma=g_0|_\Gamma=g_1|_\Gamma$, so that the isometry of the boundaries is preserved.
As in Lemma \ref{gprops}, in a point $p\in\Gamma$ (i.e. $x^n(p)=0$) we compute 
\begin{eqnarray*}
\tilde L^0_{ij}=-\frac 12\partial_n\tilde g^0_{ij}=-\frac 12 \vf'(0) g^0_{ij}-\frac 12\vf(0) \partial_ng^0_{ij}=-\frac 12 \vf'(0) g^0_{ij} + L^0_{ij}
\end{eqnarray*}
and thus
\begin{eqnarray*}
\tr_{\tilde g_0}(\tilde L_0)=g_0^{ij}(-\frac 12 \vf'(0) g^0_{ij} + L^0_{ij})=-\frac {n}{2}\vf'(0) +\tr_{g_0}L_0 >\tr_{g_0}L_0
\end{eqnarray*}
which gives us $\tr_{\tilde g_0}\tilde L_0+\tr_{g_1}L_1>0$, since by assumption $\tr_{g_0}L_0+\tr_{g_1}L_1=\tr_g L\geq 0$ on $\Gamma$.
Moreover, by construction, the new metric $\tilde g_0$ is $C^2$-close to $g_0$, thus their scalar curvatures differ only by an error term $\ve$ coming from the 
first two derivatives of $\vf$, which we may choose arbitrary small. We then may replace $g_0$ by $\tilde g_0$ and proceed like in the $\tr_gL>0$ case.

\end{proof}
\noindent
\emph{Remark:}
In \cite{Miao} P. Miao generalized the positive mass theorem \cite{Schoen Yau} (which says that 
an assymtotically flat manifold with nonnegative scalar curvature has nonnegative ADM mass),
 to metrics which fail to be $C^1$ across a hypersurface $\Sigma$. One of the essential steps of his proof 
was to smoothen the metric across $\Sigma$ in such a way that the scalar curvarture stays bounded from below by a constant 
(cf. \cite{Miao}, Proposition 3.1). Theorem \ref{scalar case} provides a slightly better approximation, since in our case the smooth metrics have scalar
curvature $\geq-\ve$.

\subsection{Manifolds with bi-curvature $\geq \kappa$}

The bi-curvature $\bi (g)$ of a $C^2$-smooth Riemannian metric $g$  is defined as the sum of the two smallest 
eigenvalues of $\mathcal R(g)$. Note that $\bi(g)\geq \kappa$ holds on $M$ iff
\[\mathcal R(g)(\alpha,\alpha)+\mathcal R(g)(\beta,\beta)\geq \kappa\]
for all $\alpha,\beta\in \Lambda^2(TM)$ which are orthonormal with respect to $g$.

\begin{thm}
Let $(M_0,g_0)$, $(M_1,g_1)$, $(M,g)$ and $L=L_0+L_1$ be as in Theorem \ref{main}. 
Suppose that $\bi(g_0)$ and $\bi(g_1)$ are at least $\kappa$. If
$L$ is positive semidefinite, then $\bi(g)\geq\kappa$ (in a similar sense as in Definition \ref{c0curv}).
\begin {proof}
We proceed  as in the previous section and show 
\begin{itemize}
\item[(a)]
The modified metric $g_\delta$ on $M_0$ satisfies $\bi(g_\delta)\geq \kappa-\varepsilon(\delta)$ 
where $\varepsilon(\delta)\rightarrow 0$ as $\delta\rightarrow 0$\\
\item[(b)] By mollifying $g_{(\delta)}$ we construct a family of smooth metrics which approximate $g$ in the $C^0$-sense and have bi-curvature at least
$\kappa -\varepsilon(\delta)$.
\end{itemize}
As mentioned above, (a) holds iff
\begin{eqnarray}
\mc R_\delta(\alpha_\delta,\alpha_\delta)+\mc R_\delta(\beta_\delta,\beta_\delta)
\geq
\kappa-\varepsilon(\delta)
\end{eqnarray}
for all $\alpha_\delta$, $\beta_\delta$ satisfying $\|\alpha_\delta\|_\delta$, $\|\beta_\delta\|_\delta=1$ and 
$\la\alpha_\delta,\beta_\delta\ra_\delta=0$ (where $\la\cdot,\cdot\ra_\delta=\mc I(g_\delta)$). In what follows we will call such 2-forms $g_\delta$-orthonormal.
Theorem \ref{operatorzerlegung} implies
\begin{eqnarray*}
&&\mc R_\delta(\alpha_\delta,\alpha_\delta)+\mc R_\delta(\beta_\delta,\beta_\delta)\\
&=&
\mc R(\alpha_\delta,\alpha_\delta)+\mc R(\beta_\delta,\beta_\delta) 
- f^2_\delta\bigl( \mc A(\alpha_\delta,\alpha_\delta)+\mc A(\beta_\delta,\beta_\delta)\bigr) 
+ f_\delta \bigl( \mc B(\alpha_\delta,\alpha_\delta)+\mc B(\beta_\delta,\beta_\delta)\bigr)\\
&-& 2 f'_\delta\bigl( \mc L(\alpha_\delta,\alpha_\delta)+\mc L(\beta_\delta,\beta_\delta)\bigr)
                            +2 f^2_\delta \bigl( \mc L^2(\alpha_\delta,\alpha_\delta)+\mc L^2(\beta_\delta,\beta_\delta)\bigr)
                            +2 Cf_\delta \bigl( \hat{\mc I}(\alpha_\delta,\alpha_\delta)+\hat{\mc I}(\beta_\delta,\beta_\delta)\bigr)\\
&+&\bigl( \mc E(\delta)(\alpha_\delta,\alpha_\delta)+\mc E(\delta)(\beta_\delta,\beta_\delta)\bigr)
\end{eqnarray*}
where $\mc E(\delta)$ is an operator whose eigenvalues tend to zero uniformly on $M_0$. 
Since $g_\delta\rightarrow g_0$ uniformly on $M_0$, for small enough $\delta$ any 
$g_\delta$-orthonormal forms $\alpha_\delta$ and $\beta_\delta$
are uniformly bounded with respect to $g_0$ by some fixed constant. Thus, we can estimate the $\mc E(\delta)$ terms  from below by $-\ve(\delta)$.
$\mc L$ is positive semidefinite and bounded near $\Gamma$, and $f'_\delta$ does not exceed $\delta$. Therefore,  
$- 2 f'_\delta\bigl( \mc L(\alpha_\delta,\alpha_\delta)+\mc L(\beta_\delta,\beta_\delta)\bigr)\geq-\ve(\delta)$.
Finally, the $\mc L^2$ terms are nonnegative and we arrive at
\begin{eqnarray}\label{xxx}
&&\mc R_\delta(\alpha_\delta,\alpha_\delta)+\mc R_\delta(\beta_\delta,\beta_\delta)\nonumber\\
&\geq&
\mc R(\alpha_\delta,\alpha_\delta)+\mc R(\beta_\delta,\beta_\delta) 
- f^2_\delta\bigl( \mc A(\alpha_\delta,\alpha_\delta)+\mc A(\beta_\delta,\beta_\delta)\bigr) 
+ f_\delta \bigl( \mc B(\alpha_\delta,\alpha_\delta)+\mc B(\beta_\delta,\beta_\delta)\bigr)\\
&+& Cf_\delta \bigl( \hat{\mc I}(\alpha_\delta,\alpha_\delta)+\hat{\mc I}(\beta_\delta,\beta_\delta)\bigr)
-\varepsilon(\delta)\nonumber
\end{eqnarray}
By applying the Gram-Schmidt process to $\alpha_\delta$ and $\beta_\delta$ and putting 
\[\tilde\alpha_\delta:=\frac{\alpha_\delta}{\|\alpha_\delta\|_0}\]
and
\[
\tilde\beta_\delta:=\frac{\beta_\delta-\la\tilde\alpha_\delta,\beta_\delta\ra_0\tilde\alpha_\delta}
                           {\|\beta_\delta-\la\tilde\alpha_\delta,\beta_\delta\ra_0\tilde\alpha_\delta\|_0}
\]
we obtain $g_0$-orthonormal 2-forms $\tilde\alpha_\delta$, $\tilde\beta_\delta$ satisfying
\[\|\tilde\alpha_\delta-\alpha_\delta\|_0,\:\|\tilde\beta_\delta-\beta_\delta\|_0\leq\varepsilon(\delta)\]
independent of the initial $\alpha_\delta$, $\beta_\delta$. Since $f_\delta$ and all of the operators on the 
right hand side of \eqref{xxx} are uniformly bounded near $\Gamma$ we may replace 
$\alpha_\delta$, $\beta_\delta$ by $\tilde\alpha_\delta$, $\tilde\beta_\delta$ and the inequality will still hold up to an $-\varepsilon(\delta)$:
\begin{eqnarray}
&&\mc R_\delta(\alpha_\delta,\alpha_\delta)+\mc R_\delta(\beta_\delta,\beta_\delta)\nonumber\\
&\geq&
\mc R(\tilde\alpha_\delta,\tilde\alpha_\delta)+\mc R(\tilde\beta_\delta,\tilde\beta_\delta) 
- f^2_\delta\bigl( \mc A(\tilde\alpha_\delta,\tilde\alpha_\delta)+\mc A(\tilde\beta_\delta,\tilde\beta_\delta)\bigr) 
+ f_\delta \bigl( \mc B(\tilde\alpha_\delta,\tilde\alpha_\delta)+\mc B(\tilde\beta_\delta,\tilde\beta_\delta)\bigr)\nonumber\\
&+& 2Cf_\delta \bigl( \hat{\mc I}(\tilde\alpha_\delta,\tilde\alpha_\delta)+\hat{\mc I}(\tilde\beta_\delta,\tilde\beta_\delta)\bigr)
-\varepsilon(\delta)\nonumber
\end{eqnarray}
From construction $\tilde\alpha_\delta$ and $\tilde\beta_\delta$ are $g_0$-orthonormal on $M_0$ and $g_1$-orthonormal on $\Gamma$
(recall that $g_0=g_1$ on $\Gamma$). By adopting the argument from Lemma \ref{lemma92} we arrive at
\begin{eqnarray}
&&\mc R_\delta(\alpha_\delta,\alpha_\delta)+\mc R_\delta(\beta_\delta,\beta_\delta)\nonumber\\
&\geq&
\kappa
+2 f_\delta
\bigl[
\bigl(-\mc L^2+\frac 12 \nabla^2_N \mc G_1+ C\hat{\mc I} \bigr)(\tilde\alpha_\delta,\tilde\alpha_\delta)
+\bigl(-\mc L^2+\frac 12 \nabla^2_N \mc G_1+ C\hat{\mc I} \bigr)(\tilde\beta_\delta,\tilde\beta_\delta)
\bigr]\nonumber\\
&-&\varepsilon(\delta)\nonumber
\end{eqnarray}
Since $-\mc L^2+\frac 12 \nabla^2_N \mc G_1+ C\hat{\mc I}$
is positive semidefinite for large enough fixed $C$ and uniformly bounded near $\Gamma$ (cf. proof of Lemma \ref{finest}), (a) follows.
\\
\\
(b) Mollifying $g_{(\delta)}$:\\
\\
Let us fix a $\delta>0$ and define the mollified metric $g_{(\delta)}^h$ in the same way as in Section 6. Our goal is to show 
\begin{eqnarray}\label{vvv}
\mc R(g_{(\delta)}^h)(\alpha,\alpha)+\mc R(g_{(\delta)}^h)(\beta,\beta)\geq \kappa -\varepsilon(\delta)
\end{eqnarray}
for all $g_{(\delta)}^h$-orthonormal $\alpha$, $\beta$.
The computations in Section 6 were carried out for 2-forms with constant coefficients, which we no longer can assume for orthonormal 2-forms.
\\
\\
Using \eqref{ungl2} we obtain 
\[\mathcal R(g^h_{(\delta)})(\alpha,\alpha)+\mathcal R(g^h_{(\delta)})(\beta,\beta)
\geq
(\mathcal R(g_{(\delta)}))^h(\alpha,\alpha) + (\mathcal R(g_{(\delta)}))^h(\beta,\beta)
  -\tilde\varepsilon(\delta,h)\bigl(\|\alpha\|_{(\delta)}^2+\|\beta\|_{(\delta)}^2\bigr)\]
  where $\tilde\ve(\delta,h)\rightarrow 0$ as $h\rightarrow 0$ for every fixed $\delta$.
Since $\alpha$ and $\beta$ have unit length with respect to $g_{(\delta)}^h$ and $g_{(\delta)}^h\stackrel{h\rightarrow 0}{\rightarrow} g_{(\delta)}$,
we can estimate the last term on the right hand side from below by $-\varepsilon (\delta)$ for small enough $h$.
Thus, \eqref{vvv} follows if we show
\begin{eqnarray}\label{bbb}
(\mathcal R(g_{(\delta)}))^h(\alpha,\alpha) + (\mathcal R(g_{(\delta)}))^h(\beta,\beta)\geq \kappa-\tilde\varepsilon(\delta)
\end{eqnarray}
for small enough $h$ and all $g_{(\delta)}^h$-orthonormal $\alpha$, $\beta$.
\\
\\
Let us now fix a point $x\in M$ and some $g_{(\delta)}^h(x)$-orthonormal $\alpha$, $\beta\in\Lambda^2({T_xM})$. 
Recall that in Section 6 we mollified $g_{(\delta)}$ and $R(g_{(\delta)})$ only on a small neighborhood of $\Gamma$ which was 
covered by finitely many coordinate charts $U_1,\dots,U_N$. Off this neighborhood $g$ coincides with $g_{(\delta)}$ and we have
\[(\mathcal R(g_{(\delta)}))^h(\alpha,\alpha) + (\mathcal R(g_{(\delta)}))^h(\beta,\beta)
=\mathcal R(g)(\alpha,\alpha) + \mathcal R(g)(\beta,\beta)\geq \kappa\]
by assumption. Thus w.l.o.g. we can assume that $x\notin\bigcup_{s>N}U_s$. For such $x$ we have
\begin{eqnarray}\label{sss}
&&(\mc R(g_{(\delta)}))^h(x)(\alpha,\alpha)+(\mc R(g_{(\delta)}))^h(x)(\beta,\beta)\\
&=&\sum_{s=1}^N\eta_s(x)\int_{|z|\leq1} \rho(z) (R(g_{(\delta)}))^s_{ijkl}(x-hz)(\alpha_s^{ij}\alpha_s^{kl}+\beta_s^{ij}\beta_s^{kl})dz\nonumber
\end{eqnarray}
where the coefficients refer to the charts $(U_s',\varphi_s)$.
Next we extend $\alpha$, $\beta$ to $U_s'$ in such a way that the extensions are $g_{(\delta)}^h$-orthonormal:
We define 2-forms $\alpha_s$, $\beta_s$ on $U_s'$, $s=1,\dots,N$ by putting
$\alpha_s^{ij}(y):=\alpha_s^{ij}$ and $\beta_s^{ij}(y):=\beta_s^{ij}$ (here we only have to consider the neighborhoods with $x\in U_s$).
Using the Gram-Schmidt process we obtain $g_{(\delta)}^h$-orthonormal 2-forms
\[\tilde\alpha_s=\frac{\alpha_s}{\|\alpha_s\|_{g_{(\delta)}^h}}\]
and
\[
\tilde\beta_s=\frac{\beta_s-\la\tilde\alpha_s,\beta_s\ra_{g_{(\delta)}^h}\tilde\alpha_s}
                           {\|\beta_s-\la\tilde\alpha_s,\beta_s\ra_{g_{(\delta)}^h}\tilde\alpha_s\|_{g_{(\delta)}^h}}
\]
(Note that these extentions might differ on $U_s\setminus\{x\}$.)
By the mean value theorem the right hand side of \eqref{sss} equals to
\begin{eqnarray}\label{www}
&&\sum_{s=1}^N\eta_s(x)\int_{|z|\leq1} \rho(z) (R(g_{(\delta)}))^s_{ijkl}(x-hz)
\bigl(\tilde\alpha_s^{ij}(x-hz)\tilde\alpha_s^{kl}(x-hz)+\tilde\beta_s^{ij}(x-hz)\tilde\beta_s^{kl}(x-hz)\bigr)dz\nonumber\\
&+&h\sum_{s=1}^N\eta_s(x)\int_{|z|\leq1} \rho(z) (R(g_{(\delta)}))^s_{ijkl}(x-hz)
D\bigl( \tilde\alpha_s^{ij}\tilde\alpha_s^{kl}+\tilde\beta_s^{ij}\tilde\beta_s^{kl}\bigr)(\xi^s_{x,hz})z\,dz
\end{eqnarray} 
where $\xi^s_{x,hz}=(1-t)x+thz$ for some $t\in[0,1]$.
Now we apply the Gram-Schmidt process  with respect to $g_{(\delta)}$ to the 2-forms $\tilde \alpha_s$ and $\tilde \beta_s$, and construct
$g_{(\delta)}$-orthonormal $\tilde{\tilde\alpha}_s,\tilde{\tilde\beta}_s$.
The first sum in \eqref{www} is estimated from below by
\begin{eqnarray}\label{zzz}
&&\sum_{s=1}^N\eta_s(x)\int_{|z|\leq1} \rho(z) (R(g_{(\delta)}))^s_{jikl}(x-hz)
\bigl(\tilde{\tilde\alpha}_s^{ij}(x-hz)\tilde{\tilde\alpha}_s^{kl}(x-hz)+\tilde{\tilde\beta}_s^{ij}(x-hz)\tilde{\tilde\beta}_s^{kl}(x-hz)\bigr)\nonumber\\
&-&\varepsilon(\delta,h)
\end{eqnarray}
where
\[
\varepsilon(\delta,h)
\leq c(n)\|(R(g_{(\delta)}))^s_{ijkl}\|_{L^\infty(U_s')}\|(g_{(\delta)}-g_{(\delta)}^h)^s_{ij}\|_{C^0(U_s')}\stackrel{h\rightarrow 0}{\rightarrow} 0
\]
for every fixed $\delta$. Moreover, in view of (a) the integrand in \eqref{zzz} is bounded from below by $\kappa-\varepsilon(\delta)$.
Finally, the second integrand in \eqref{www} is bounded by 
\[
c(n)\|(R(g_{(\delta)}))^s_{ijkl}\|_{L^\infty(U_s')}\|(g_{(\delta)}-g_{(\delta)}^h)^s_{ij}\|_{C^1(U_s')}
\]
and thus
the second expression in \eqref{www} tends to zero uniformly as $h\rightarrow 0$. For small enough $h$ \eqref{bbb} follows with 
$\tilde\varepsilon(\delta)=2\varepsilon(\delta)$ and we are done.
\end{proof}
\end{thm}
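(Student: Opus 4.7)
The plan is to adapt the two-step scheme of Sections 4--6 to bi-curvature. In step (a) I would show the modified metric $g_\delta$ on $M_0$ satisfies $\bi(g_\delta) \geq \kappa - \varepsilon(\delta)$, i.e.\
\[
\mc R_\delta(\alpha_\delta,\alpha_\delta) + \mc R_\delta(\beta_\delta,\beta_\delta) \geq \kappa - \varepsilon(\delta)
\]
for every $g_\delta$-orthonormal pair $\alpha_\delta,\beta_\delta \in \Lambda^2(TM_0)$. In step (b) I would mollify $g_{(\delta)}$ as in Section 6 to produce smooth approximations whose bi-curvature is at least $\kappa - \tilde\varepsilon(\delta)$.

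For step (a), I would insert the decomposition of Theorem \ref{operatorzerlegung} and first dispatch the easy pieces. Lemma \ref{produkt geq 0} together with $\mb L \geq 0$ give $\mc L = \mb L \we \mb P^N \geq 0$ and $\mc L^2 = \mb L^2 \we \mb P^N \geq 0$; since $f_\delta' \leq 0$ on $[0,\delta^4]$ where $\mc L \geq 0$ makes $-2f_\delta'\mc L$ nonnegative, and $|f_\delta'| \leq \delta$ on $[\delta^4,\delta]$, the $-2f_\delta'\mc L$ and $2f_\delta^2\mc L^2$ contributions cost at most $\varepsilon(\delta)$ in total. What remains is to bound
\[
\sum_{\gamma \in \{\alpha_\delta,\beta_\delta\}} \bigl(\mc R - f_\delta^2 \mc A + f_\delta \mc B + 2Cf_\delta \hat{\mc I}\bigr)(\gamma,\gamma)
\]
from below by $\kappa - \varepsilon(\delta)$. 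I would Gram-Schmidt the pair into a $g_0$-orthonormal pair $\tilde\alpha_\delta,\tilde\beta_\delta$ (costing $\varepsilon(\delta)$ since $g_\delta \to g_0$), then run the concavity argument of Lemma \ref{lemma92} on
\[
\psi(y) := \sum_{\gamma \in \{\tilde\alpha_\delta,\tilde\beta_\delta\}} \bigl(\mc R - y^2 \mc A + y \mc B + 2Cy \hat{\mc I}\bigr)(\gamma,\gamma).
\]
Concavity in $y$ follows from $\mc A = \mb L \we \mb L \geq 0$. At $y = 0$, $\psi(0) \geq \kappa$ is exactly $\bi(g_0) \geq \kappa$. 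At $y = 1$ on $\Gamma$, Proposition \ref{Rstrich} rewrites the expression in terms of $\mc R_1' = \mc R_1$, and $\bi(g_1) \geq \kappa$ together with the fact that $-\mc L^2 + \tfrac12 \nabla^2_N \mc G_1 + C\hat{\mc I}$ is nonnegative near $\Gamma$ for large enough $C$ (by Lemma \ref{produkt geq 0}, since $\mb L^2$ and $\nabla^2_N \mb G_1$ vanish on $(T\Gamma(d))^\perp$) yields $\psi(1) \geq \kappa$. Smoothness in $x^n$ and uniform bounds near $\Gamma$ extend the endpoint inequalities to all $y = f_\delta(x^n) \in [-\delta^2,1]$ modulo $\varepsilon(\delta)$.

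For step (b), I would run the mollification of Section 6 to form $g^h_{(\delta)}$. Given a fixed point $x$ and a $g^h_{(\delta)}(x)$-orthonormal pair $\alpha,\beta$, I would extend them to a chart $U_s$ with constant coefficients, then perform two successive Gram-Schmidt procedures (one for $g^h_{(\delta)}$, one for $g_{(\delta)}$) so that step (a) can be inserted pointwise under the convolution integral. The mean value theorem controls the discrepancy between evaluating at $x - hz$ versus $x$, mirroring the Ricci argument of Section \ref{ricci section}.

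The hard part will be step (b). Unlike the pointwise operator inequality $\mc R \geq \kappa\mc I$ in the full curvature operator case, which convolves cleanly against the mollifier, bi-curvature is a pointwise condition on orthonormal \emph{pairs} whose frames depend on the metric, and therefore on $h$. Managing the extension and the double Gram-Schmidt while keeping the $(\delta,h)$-errors uniformly controlled is the technical crux; step (a) by contrast is a direct sum-adaptation of Lemmas \ref{lemma92}--\ref{finest}.
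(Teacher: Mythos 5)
Your proposal follows essentially the same route as the paper: the same decomposition from Theorem \ref{operatorzerlegung}, disposal of the $\mc L$, $\mc L^2$ and error terms by positivity and boundedness, a Gram--Schmidt reduction to $g_0$-orthonormal pairs followed by the concavity argument of Lemma \ref{lemma92} with the $\Gamma$-endpoint supplied by Proposition \ref{Rstrich} and the $C\hat{\mc I}$ absorption as in Proposition \ref{finest}, and in the mollification step the constant-coefficient extension with two successive Gram--Schmidt orthonormalizations (with respect to $g^h_{(\delta)}$ and $g_{(\delta)}$) plus the mean value theorem, exactly as in the paper's proof. No essential difference or gap to report.
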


\subsection{Manifolds with isotropic curvature $\geq \kappa$}

Given a smooth Riemannian manifold $(M,g)$ we consider the complexification of its tangent bundle $\mathbb C \otimes_{\mathbb R} TM$
and the complex-linear extensions of the inner product $g$ and the Riemannian curvature tensor $R$. A complex isotropic two-plane
is spanned by two vectors $Z=X+iY$ and $W=U+iV$ where $X,Y,U,V\in TM$ are orthonormal with respect to $g$.
The isotropic curvature of such a two-plane $P$ is defined as
\[
K(P)=R(Z,W,\bar Z,\bar W)
\]
Using the Bianchi identity \[R(X,Y,U,V)+R(X,V,Y,U)+R(X,U,V,Y)=0\] one easily verifies
\[
K(P)=R(X,U,X,U)+R(X,V,X,V)+R(Y,U,Y,U)+R(Y,V,Y,V)-2R(X,Y,U,V)
\]
Given an isotropic two-plane $P$ spanned by $X+iY$ and $U+iV$ one computes using the Bianchi identity once more
\begin{eqnarray}\label{iso}
K(P)=\mc R(\alpha,\alpha)+\mc R(\beta,\beta)
\end{eqnarray}
where $\alpha=X\wedge U+V\wedge Y$ and $\beta=X\wedge V+Y\wedge U$.
We say that a Riemannian manifold has isotropic curvature $\geq \kappa$ if $K(P)\geq \kappa$ holds for all isotropic two-planes of $M$. 
$M$ has 1-isotropic (2-isotropic) curvature $\geq \kappa$ if $M\times \mathbb R$ ($M\times \mathbb R^2$) has isotropic curvature $\geq \kappa$.

\begin{thm}
Let $(M_0,g_0)$, $(M_1,g_1)$, $(M,g)$ and $L=L_0+L_1$ be as in Theorem \ref{main}. 
Suppose that the isotropic (1-isotropic, 2-isotropic) curvatures of $g_0$ and $g_1$ are at least $\kappa$. 
Then the isotropic (1-isotropic, 2-isotropic) curvatures of $g$ is at least $\kappa$ (in a similar sense as in Definition \ref{c0curv}) if
$L$ is positive semidefinite.

\begin{proof}
In view of \eqref{iso}, the proof for the isotropic case is similar as in the previous section.
For the 1-isotropic case let us examine the manifold resulting from gluing $M_1\times \mathbb R$ and 
$M_2\times \mathbb R$ along their boundaries. The boundary of $M_i$, $i=1,2$ is given by
$\Gamma_i\times \mathbb R$. If $\phi:\Gamma_1\rightarrow \Gamma_2$ is  some isometry
of $\Gamma_1$, $\Gamma_2$ with respect to $g_1$, $g_2$, then 
\begin{eqnarray*}
\tilde \phi:\Gamma_1\times \mathbb R&\rightarrow & \Gamma_2\times \mathbb R\\
(x,s)&\mapsto& (\phi(x),s)
\end{eqnarray*}
is an isometry of $\Gamma_1\times \mathbb R$ and $\Gamma_2\times \mathbb R$ with respect to
$g_0\oplus dr$, $g_1\oplus dr$, where $dr$ denotes the standard metric on $\mathbb R$. One easily verifies that 
\[(M_1\times \mathbb R)\cup_{\tilde\phi} (M_2\times \mathbb R)=(M_1\cup_\phi M_2)\times \mathbb R\]
and
\[(g\oplus dr)|_{M_i\times \mathbb R}=g|_{M_i}\oplus dr\]

The inward normal on $\Gamma_i\times \mathbb R$ with respect to $g_i\oplus dr $ is given by $(N,0)$,
where $N$ is the inward normal on $\Gamma_i$ with respect to $g_i$. The second fundamental forms of
$\Gamma_i\times \mathbb R$ are $L_i\oplus 0$, and therefore their sum is positive semidefinite.
We repeat the constructions from Section 2 and define the modified metric $(g_0\oplus dr)_\delta=g_\delta\oplus dr$ on $M_0\times \mathbb R$.
Though $\Gamma\times\mathbb R$ fails to be compact, we may nevertheless proceed as in the isotropic case,
since any operator $\mc T$ which occurs for $M_i\times \mathbb R$ during the proof satisfies $\mc T(x,s)=\mc T(x,0)$, and therefore is
bounded near $\Gamma\times \mathbb R$ due to the compactness of $\Gamma$. The desired smooth metric on
$M\times\mathbb R$, which approximates $g\oplus dr$ and has isotropic curvature $\geq\kappa-\eps(\delta)$ is then given by $g_{(\delta)}\oplus dr$.
The proof for the 2-isotropic case is similar.

\end{proof}

\end{thm}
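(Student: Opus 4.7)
The plan is to mirror the bi-curvature argument of the previous subsection, using the key identity \eqref{iso} to recast the isotropic curvature lower bound as a lower bound for $\mc R(\alpha,\alpha)+\mc R(\beta,\beta)$, taken over a restricted family of pairs of 2-forms rather than over all orthonormal pairs. Concretely, given $g$-orthonormal $X,Y,U,V$, the 2-forms $\alpha=X\we U+V\we Y$ and $\beta=X\we V+Y\we U$ satisfy $\|\alpha\|_g^2=\|\beta\|_g^2=2$ and $\la\alpha,\beta\ra_g=0$, so after normalizing they form a $g$-orthonormal pair of a very special ``isotropic'' shape. All the ingredients used in the bi-curvature proof -- Theorem \ref{operatorzerlegung}, Lemma \ref{lemma92}, Proposition \ref{finest} and the mollification argument of Section 6 -- operate at the level of $\mc R(\alpha,\alpha)+\mc R(\beta,\beta)$, so they carry over once we can propagate the isotropic structure through the various Gram--Schmidt steps.

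\textbf{Step (a).} Fix $g_\delta$-orthonormal $X_\delta,Y_\delta,U_\delta,V_\delta$ and let $\alpha_\delta,\beta_\delta$ be the associated pair. Applying Theorem \ref{operatorzerlegung} to $\mc R_\delta(\alpha_\delta,\alpha_\delta)+\mc R_\delta(\beta_\delta,\beta_\delta)$ produces exactly the same decomposition as in the bi-curvature case. The $\mc L^2$ term is nonnegative, the $f_\delta'\mc L$ term is controlled because $\mc L=\mathbf L\we\mathbf P^N\geq 0$ is bounded and $|f_\delta'|\leq\delta$ off $[0,\delta^4]$, and the error operator $\mc E(\delta)$ is absorbed since any $g_\delta$-orthonormal system is uniformly $g_0$-bounded. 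To invoke the isotropic bound for $g_0$, I would apply Gram--Schmidt at the vector level to $X_\delta,Y_\delta,U_\delta,V_\delta$ (not at the 2-form level), obtaining $g_0$-orthonormal $\tilde X_\delta,\tilde Y_\delta,\tilde U_\delta,\tilde V_\delta$; the associated 2-forms $\tilde\alpha_\delta,\tilde\beta_\delta$ are then of isotropic type for $g_0$ and differ from $\alpha_\delta,\beta_\delta$ by $O(\eps(\delta))$ in $\|\cdot\|_0$. Combined with the Lemma \ref{lemma92}-style concavity argument in the variable $f_\delta\in[-\delta^2,1]$, applied to the $g_0$-isotropic inequality that holds on $\Gamma$ by Proposition \ref{Rstrich} and the assumption on $\mc R_1$, this yields $\mc R_\delta(\alpha_\delta,\alpha_\delta)+\mc R_\delta(\beta_\delta,\beta_\delta)\geq\kappa-\eps(\delta)$ for sufficiently large fixed $C$.

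\textbf{Step (b).} Mollify $g_{(\delta)}$ exactly as in Section 6 to obtain $g_{(\delta)}^h$. Given $g_{(\delta)}^h$-isotropic data $X,Y,U,V$ at a point $x$, extend them to a coordinate neighborhood with constant coefficients and then apply Gram--Schmidt twice: first with respect to $g_{(\delta)}^h$ to get unit-length extensions, then (inside the convolution integrals and after a mean-value expansion as in the bi-curvature proof) with respect to $g_{(\delta)}$ to reach the hypothesis of Step (a). The error terms produced by differentiating the Gram--Schmidt coefficients enter linearly in $h$ and are swallowed into $\tilde\eps(\delta,h)$, yielding the required isotropic bound for $g^h_{(\delta)}$.

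\textbf{1-isotropic and 2-isotropic cases.} These reduce to the isotropic case via the product construction: one checks that $(M_0\times\mathbb R^k)\cup_{\tilde\phi}(M_1\times\mathbb R^k)=(M_0\cup_\phi M_1)\times\mathbb R^k$, that the metric $g\oplus dr^{\otimes k}$ respects this identification, that the inward unit normals and second fundamental forms split as $(N,0)$ and $L_i\oplus 0$ (so the sum is still positive semidefinite), and that the whole modification procedure of Sections 2--6 can be performed fiberwise. The only new technicality is that $\Gamma\times\mathbb R^k$ is not compact, but every operator appearing in the construction is translation-invariant in the $\mathbb R^k$-directions, hence controlled by its restriction to the compact $\Gamma$; the approximating smooth metric is simply $g_{(\delta)}\oplus dr^{\otimes k}$. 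The main obstacle I anticipate is the bookkeeping of the Gram--Schmidt-at-the-vector-level modification in Step (a), since unlike the bi-curvature case the 2-forms must retain their algebraic form $X\we U+V\we Y$, $X\we V+Y\we U$; everything else is formally parallel to the bi-curvature argument.
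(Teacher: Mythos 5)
Your proposal follows essentially the same route as the paper: the isotropic case is handled by rerunning the bi-curvature argument on the special pairs $\alpha=X\we U+V\we Y$, $\beta=X\we V+Y\we U$ furnished by \eqref{iso}, and the 1- and 2-isotropic cases are reduced to it via the product construction $M_i\times\mathbb R^k$, with the normal $(N,0)$, second fundamental forms $L_i\oplus 0$, translation invariance compensating for the non-compactness of $\Gamma\times\mathbb R^k$, and $g_{(\delta)}\oplus dr^{\otimes k}$ as the approximating metric. Your extra care about performing Gram--Schmidt at the vector level so the perturbed 2-forms retain the isotropic shape is a detail the paper leaves implicit, but it is consistent with, not divergent from, its argument.
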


\subsection{Manifolds with flag curvature $\geq \kappa$}
The flag curvature of an orthonormal three-frame $\{e_1,e_2,e_3\}$ is defined as
\[\fl(e_1,e_2,e_3)=\mc R(e_1\we e_3,e_1\we e_3) +\mc R(e_2\we e_3,e_2\we e_3)\]
\begin{thm}
Let $(M_0,g_0)$, $(M_1,g_1)$, $(M,g)$ and $L=L_0+L_1$ be as in Theorem \ref{main}. 
Suppose that the flag curvatures of $g_0$ and $g_1$ are at least $\kappa$. 
Then the flag curvatures of $g$ is at least $\kappa$ (in a similar sense as in Definition \ref{c0curv}) if
$L$ is positive semidefinite.
\end{thm}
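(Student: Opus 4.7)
The plan is to mimic the bi-curvature proof, exploiting the identity
\[
\fl_g(e_1,e_2,e_3)=\mc R(e_1\we e_3,e_1\we e_3)+\mc R(e_2\we e_3,e_2\we e_3),
\]
which expresses flag curvature as a sum $\mc R(\alpha,\alpha)+\mc R(\beta,\beta)$ over the two $g$-orthonormal $2$-vectors $\alpha=e_1\we e_3$, $\beta=e_2\we e_3$ associated to a $g$-orthonormal $3$-frame. I would split the argument into the same two steps as before: (a) showing $\fl(g_\delta)\geq\kappa-\varepsilon(\delta)$ on $M_0$ for the modified metric, and (b) mollifying $g_{(\delta)}$ to obtain smooth metrics whose flag curvature is at least $\kappa-\tilde\varepsilon(\delta)$.

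For (a), fix any $g_\delta$-orthonormal $3$-frame $\{e_1^\delta,e_2^\delta,e_3^\delta\}$ and form the corresponding flag $2$-vectors $\alpha_\delta,\beta_\delta$, which are $g_\delta$-orthonormal. Apply Theorem \ref{operatorzerlegung} to each and sum: the $2f_\delta^2\mc L^2$-contributions are nonnegative and dropped, the $-2f'_\delta\mc L$-contribution is controlled via $\mc L=\mb L\we\mb P^N\geq 0$ (Lemma \ref{produkt geq 0}) together with the bound on $f'_\delta$, and the error $\mc E(\delta)$ is absorbed into $\varepsilon(\delta)$ using uniform $g_0$-boundedness of $\alpha_\delta,\beta_\delta$. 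The crucial structural modification compared with the bi-curvature case is that the Gram-Schmidt process must be applied to the \emph{$3$-frame} (first with respect to $g_\delta$, then with respect to $g_0$) rather than to the pair $(\alpha_\delta,\beta_\delta)$ itself, so that the flag structure is preserved. The resulting $g_0$-orthonormal $3$-frame $\{\tilde e_1,\tilde e_2,\tilde e_3\}$ differs from the initial frame by $O(\varepsilon(\delta))$, and on $\Gamma$ the hypothesis $\fl(g_1)\geq\kappa$ together with Proposition \ref{Rstrich} yields
\[
\bigl(\mc R-\mc A+\mc B+2\mc L^2-\nabla_N^2\mc G_1\bigr)(\tilde\alpha,\tilde\alpha)+\bigl(\mc R-\mc A+\mc B+2\mc L^2-\nabla_N^2\mc G_1\bigr)(\tilde\beta,\tilde\beta)\geq\kappa
\]
for $\tilde\alpha=\tilde e_1\we\tilde e_3$, $\tilde\beta=\tilde e_2\we\tilde e_3$. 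The concavity-in-$f_\delta$ argument of Lemma \ref{lemma92} then propagates this inequality from $\Gamma$ to a $\delta$-neighborhood, and the $2Cf_\delta\hat{\mc I}$-term absorbs the remaining $\mc A$, $\mc B$, $\mc L^2$-contributions for $C$ large enough, exactly as in Proposition \ref{finest}.

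For (b), I would adopt the mollification argument from the isotropic curvature case almost verbatim, replacing the Gram-Schmidt step on pairs of $2$-vectors by Gram-Schmidt on $3$-frames. Given a $g^h_{(\delta)}$-orthonormal $3$-frame at a base point $x$, extend it by constant coefficients in each local coordinate chart $U_s'$, re-orthonormalize first with respect to $g_{(\delta)}^h$ and then with respect to $g_{(\delta)}$ to obtain $g_{(\delta)}$-orthonormal $3$-frame fields close to the extension, form the corresponding flag $2$-vectors, and apply (a) combined with the mean value theorem to control the deviation from the constant-coefficient extensions. The extra terms produced by differentiating the non-constant frames are of order $h$ for fixed $\delta$ and vanish uniformly as $h\to 0$.

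The main obstacle is the one already encountered in the bi-curvature proof: after Gram-Schmidt in step (b) the $2$-vectors no longer have constant coefficients, so a careful mean value argument is required with $L^\infty$-bounds on $R(g_{(\delta)})$ that blow up as $\delta\to 0$ but are harmless after choosing $h$ small for each fixed $\delta$. The only genuinely new bookkeeping specific to flag curvature is to carry out every orthonormalization at the level of $3$-frames, so that the flag structure $\alpha=e_1\we e_3$, $\beta=e_2\we e_3$ survives and one may invoke the hypothesis $\fl(g_i)\geq\kappa$ rather than the strictly stronger $\bi(g_i)\geq\kappa$.
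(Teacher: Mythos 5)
Your proposal is correct and follows essentially the same route as the paper, which for this theorem only states that ``the proof is similar as in the bi-curvature case'' without further detail. Your elaboration supplies exactly the intended analogy, and your key observation --- that the Gram--Schmidt re-orthonormalizations must be performed on the $3$-frame $\{e_1,e_2,e_3\}$ rather than on the pair $(\alpha,\beta)$ of $2$-vectors, so that the flag structure is preserved and only $\fl(g_i)\geq\kappa$ (not the stronger $\bi(g_i)\geq\kappa$) is invoked --- is precisely the adjustment needed to make the bi-curvature and mollification arguments go through.
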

\noindent
The proof is similar as in the bi-curvature case.

\end{document}